\numberwithin{equation}{section}
\numberwithin{equation}{subsection}
\theoremstyle{plain}
\newtheorem{theorem}[equation]{Theorem}
\newtheorem{lemma}[equation]{Lemma}
\newtheorem{proposition}[equation]{Proposition}
\newtheorem{corollary}[equation]{Corollary}
\theoremstyle{definition}
\newtheorem{example}[equation]{Example}
\newtheorem{remark}[equation]{Remark}
\newtheorem{definition}[equation]{Definition}
\def\C{\mathbb C}
\def\Q{\mathbb Q}
\def\Z{\mathbb Z}
\def\im{{\rm im}}
\newcommand{\cale}{{\mathcal E}}
\newcommand{\calv}{{\mathcal V}}
\newcommand{\calm}{{\mathcal M}}
\newcommand{\calG}{{\mathcal G}}
\newcommand{\cali}{{\mathcal I}}
\newcommand{\calO}{{\mathcal O}}
\newcommand{\calS}{{\mathcal S}}
\newcommand{\calL}{\mathcal{L}}
\newcommand{\tX}{\widetilde{X}}
\newcommand{\reg}{{\rm reg}}
\newcommand{\cO}{{\mathcal O}}
\newcommand{\bP}{{\mathbb P}}
\newcommand{\bC}{{\mathbb C}}
\newcommand{\eca}{{\rm ECa}}
\newcommand{\pic}{{\rm Pic}}
\newcommand{\m}{\mathfrak{m}}\newcommand{\fr}{\mathfrak{r}}
\newcommand{\mfl}{\mathfrak{L}}
\newcommand{\frII}{\mathfrak{I}}
\newcommand{\bZ}{{\mathbb{Z}}}
\newcommand{\bQ}{{\mathbb{Q}}}
\title{The multiplicity of \\ generic normal surface singularities}
\author{J\'anos Nagy}
\address{Alfr\'ed R\'enyi Institute of Mathematics,
Re\'altanoda utca 13-15, H-1053,  Budapest, Hungary}
\email{janomo@renyi.hu}
\author{Andr\'as N\'emethi}
\address{Alfr\'ed R\'enyi Institute of Mathematics,
Re\'altanoda utca 13-15, H-1053, Budapest, Hungary \newline
 \hspace*{4mm} ELTE - University of Budapest, Dept. of Geometry, Budapest, Hungary \newline \hspace*{4mm}
BCAM - Basque Center for Applied Math.,
Mazarredo, 14 E48009 Bilbao, Basque Country – Spain}
\email{nemethi.andras@renyi.hu }
\thanks{The authors are supported by the NKFIH Grant ``\'Elvonal (Frontier)'' KKP 126683.}
\begin{document}

\keywords{normal surface singularities, generic analytic structure, links of singularities, Picard group,
natural line bundles,
 rational homology spheres, geometric genus, multiplicity.}

\subjclass[2010]{Primary. 32S05, 32S25, 32S50,
Secondary. 14Bxx, 14J80}

\begin{abstract}
We provide combinatorial/topological formula for the multiplicity of a complex analytic normal surface singularity
whenever the analytic structure on the fixed topological type is generic.
\end{abstract}

\maketitle

\linespread{1.2}

%\date{}

\pagestyle{myheadings} \markboth{{\normalsize  J. Nagy, A. N\'emethi}} {{\normalsize Multiplicity}}

%\setcounter{tocdepth}{1}
%\tableofcontents

\section{Introduction}\label{s:intr}

\subsection{The `multiplicity problem'. }
Probably the most fundamental numerical invariant of a projective variety $X$ embedded in some
projective space $\bP^N$ is its degree. Its local analogue, defined for local (algebraic or analytic) germs
$(X,o)$ is the multiplicity ${\rm mult}(X,o)$ of $(X,o)$. If $(X,o)$ is embedded in some $(\C^N,o)$
then it is the smallest intersection multiplicity of $(X,o)$ with a linear subspace germ $(L,o)$
of dimension $N-\dim (X,o)$. It is independent of the embedding $(X,o)\subset (\C^N,o)$, it can also be defined
via the Hilbert--Samuel function of the maximal ideal $\m_{X,o}\subset \calO_{X,o}$, cf.  \ref{bek:HSF}.
 By definition it is an analytic invariant, and it guides several central geometric problems.

 E.g., besides its geometric significance as the `local degree' of $(X,o)$, which obstructs (guides) the structure of
 analytic functions defined on $(X,o)$, it is the key numerical invariant  of
  several objects associated canonically to $(X,o)$.
See e.g. the significance  of the multiplicity of the polar curve or of the discriminant in the case of
 hypersurface singularities   \cite{Te0,Te},
or the multiplicities of the $\delta$--constant (Severi) strata of the deformation of a plane curve singularity \cite{FGS,Shende}.

In this note we focus on the multiplicity of a complex analytic normal surface singularity.
The guiding  question is whether the multiplicity is computable from the topology of the link.
The topology of the link (as an oriented 3--manifold with usually `large' fundamental group) contains a huge amount of
information, however the problem is still difficult. E.g., there are examples of local, topologically constant
deformations  when the multiplicity jumps (see e.g. the examples from section \ref{s:examples}, when any
analytic type can be deformed into a generic one). Moreover,  there are `easy' pairs of
examples, even hypersurface singularities,
with the same topology but different multiplicity (e.g. $\{x^2+y^7+z^{14}=0\}$ and $\{x^3+y^4+z^{12}=0\}$.)
In such cases of  pairs of hypersurface singularities  the common link is not a rational homology sphere.
Therefore, it is natural to impose for the link
to be  a rational homology sphere (that is, in a resolution of $(X,o)$ all the exceptional curves are
rational and the dual graph is a tree).

The problem can be compared with the famous Zariski's Conjecture \cite{ZaOpen},
 which asks whether the multiplicity of an isolated hypersurface singularity $(X,o)\subset (\C^{n+1},o)$ can be
 recovered from the embedded topological type, that is, from the smooth embedding $link(X)\subset S^{2n+1}$. Except
 for some particular families the answer is not known yet, it is open even for surface singularities.
 For a survey see \cite{E} (and the references therein).
 Note that our projects wishes to connect the multiplicity  merely with  the abstract link
 (but under the assumption that
 the link is a rational homology sphere).
 In fact, in \cite{MeNe} it is conjectured that for isolated hypersurface surface singularities with
 rational homology sphere link the abstract link determines the multiplicity (it was verified  in the
 suspension case in \cite{MeNe} and for germs with non--degenerate Newton principal parts in \cite{BN}).
 Note that for hypersurfaces the multiplicity is the smallest degree of the monomials from its equation, still,
 to recover this number from the topology can be hard.

 If the normal surface singularity $(X,o)$ is not a hypersurface then the situation is even harder: it might happen
 that the topological type carries many rather different families of analytic structures.

 On the other hand, there are some `positive example/families' as well. Artin in \cite{Artin62,Artin66}
characterized rational singularities topologically and  determined the multiplicity explicitly from the graph.
This was extended by  Laufer  in \cite{Laufer77} to minimally elliptic singularities, and
extended further for
Gorenstein elliptic singularities in \cite{weakly}. For splice quotient singularities (cf. \cite{NWsq}),
a family which includes weighted homogeneous germs as well,
the multiplicity was determined topologically in \cite{NCL};
for abelian covers of  splice quotient singularities in \cite{O15}.
Otherwise the literature is rather restrictive about any kind of
multiplicity formulae. (Here we might mention recent connections with the bi--Lipschitz geometry,
however bi--Lipschitz property is an analytic property, stronger than the abstract topological type).

In \cite{Wa70} Wagreich proved that in the presence of a resolution $\tX\to X$,
if $Z_{max} $ is the `maximal ideal cycle' (of Yau \cite{Yau1}),  and $\calO_{\tX}(-Z_{max})$ has no base points,
then ${\rm mult}(X,o)=-Z_{max}^2$. Here there are two difficulties: to determine $Z_{max}$, and to characterize the
base points of $\calO_{\tX}(-Z_{max})$.

\subsection{} In the present note, instead of certain peculiar families,  we focus on the `generic  analytic structures'.
 We fix a topological type, say a dual graph $\Gamma$,
and we determine the multiplicity of a singularity $(X,o)$, which has a resolution
$\tX$ with dual graph $\Gamma$,  and $\tX$ carries a generic analytic structure.
(It turns out that the expression is independent of the choice of $\Gamma$ up to the natural blow up  of the graph.)
Note that the moduli space of analytic structures supported on $\Gamma$ are not known, we will
use the parameter space of local deformations of Laufer \cite{LaDef1} to define the `generic analytic structure'.

For generic analytic structures in \cite{NNII} we already determined several analytic invariant topologically.
That package of results basically concentrated on the cohomology of (certain natural) line bundles.
It was a continuation of \cite{NNI}, where the Abel map of resolution of normal surface singularities was
introduced and treated. The  article  \cite{NNI}
 creates that new mathematical machinery,  which can handle the subtle analytic invariants of
line bundles. In \cite{NNII} the cycle $Z_{max}$ was already determined for the generic analytic structure
(together with `analytic semigroup' of divisors of analytic functions of $(X,o)$).
In the present note we characterize topologically the  base points of  $\calO_{\tX}(-Z_{max})$ as well.
%they necessarily belong to the regular part of the exceptional curve, and
%for every irreducible exceptional curve we determine the number and the type of the
%base points supported on it.
In this
topological characterization we use  the Riemann--Roch expression $\chi(l)$, (defined for cycles $l$
 supported on the exceptional curve). For the definition of $\chi$ see \ref{ss:topol}.

For $\tX$ generic, and $(X,o)$ non--rational,
 $Z_{max}$ is determined as follows (\cite{NNII}, or Theorem \ref{th:OLD} below).
Set $\calm=\{ Z\,:\, \chi(Z)=\min _{l\in L}\chi(l)\}$.
Then
the unique maximal element of $\calm$ is the maximal ideal cycle of\, $\tX$.

 The next theorem provides the structure of base points (for  more general versions see
 Theorems \ref{th:NEW1} and \ref{th:NEW2} or Proposition \ref{prop:GENERAL} below).

  \begin{theorem}\label{th:NEW1Intr}
Consider a resolution $\tX\to X$  with generic analytic structure. Let $E$ be the exceptional curve $\cup_{v\in\calv}E_v$.
We say that the irreducible component  $E_v$  satisfies the property $(*_v)$ if
\begin{equation*}
(*_v) \hspace{1cm} \min_{l\geq E_v}\{\chi(Z_{max}+l)\}=\chi(Z_{max})+1.\hspace{2cm}
\end{equation*}
Then the following facts hold.

$\bullet$ \ If $p$ is a base point of $\calL:=\calO_{\tX}(-Z_{max})$ then $p$ is a regular point of $E$.

$\bullet$ \ If $p\in E_v$ is a base point of $\calL$  then $E_v$ satisfies $(Z_{max}, E_v)<0$ and  the property $(*_v)$.
%$\mathfrak{h}(Z_{max}+E_v)=\mathfrak{h}(Z_{max})+1$.

$\bullet$ \ If $(Z_{max},E_v)<0$ and $E_v$ satisfy $(*_v)$ then $\calL$
%$\mathfrak{h}(Z_{max}+E_v)=\mathfrak{h}(Z_{max})+1$  then
has  exactly $-(Z_{max},E_v)$ base points on $E_v$. % (all simple).

In this case define  $t(v):=m_v^+-m_v$, where $m_v$ is the
$E_v$--coefficient of $Z_{max}$ and
$$
m_v^+=\max\{\,E_v\mbox{--coefficient of } Z_{max}+l \, :\, \mbox{where} \ l\geq E_v, \ l\in L, \ \chi(Z_{max}+l)=\chi(Z_{max})+1\}.
$$
Then the ideal of each base point $p$ on $E_v$ has the uniform local type $(x^{t(v)},y)$, where $(x,y)$ are local coordinates at $p$
and  $\{x=0\}$ is the local equation of $(E,p)$.
%(5) Under the assumptions of (4), in fact, any base point on $E_v$ is of $A_1$--type.
%In the situation of (4) let $s'$ be the unique minimal element of
%$\calS_{an,[l']}$ with $s'\geq l'+E_v$ (cf. \ref{bek:ansemgr}).
%Let $m_v$ (resp. $m_v^+$) be the multiplicity of $l'$ (resp. of $s'$)
%along $E_v$ (in the basis $\{E_u\}_u$).
%Then $t(p)=m_v^+-m_v$ for any base point $p\in E_v$.
%(For a reinterpretation of $m^+_v$ see \ref{rem:M+}.)
%
%
%(6) In particular, cf. (\ref{eq:multGEN}),
%\begin{equation}\label{eq:multGEN2}
%{\rm mult} (X,o)=-Z_{max}^2 -\sum_v (Z_{max},E_v)\cdot (m_v^+-m_v),
%\end{equation}
%where the sum is over all $v\in \calv$ with $(Z_{max},E_v)<0$ and
%$\mathfrak{h}(Z_{max}+E_v)=\mathfrak{h}(Z_{max})+1$. \\
%
%Since all the involved invariants, $Z_{max}$, $\calS_{an}$, $\{\mathfrak{h}(l)\}_{l\in L_{\geq 0}}$, are
%computable from the dual graph $\Gamma$ of $\tX$ (cf. Theorem \ref{th:OLD}), (\ref{eq:multGEN2}) is a %topological/combinatorial expression for ${\rm mult}(X,o)$.
\noindent In particular,
\begin{equation}
{\rm mult} (X,o)=-Z_{max}^2 -\sum_v t(v)\cdot  (Z_{max},E_v),
\end{equation}
where the sum is over all $v\in \calv$ with $(Z_{max},E_v)<0$ and
%$\mathfrak{h}(Z_{max}+E_v)=\mathfrak{h}(Z_{max})+1$. \\
$\min_{l\geq E_v}\chi(Z_{max}+l)=\chi(Z_{max})+1$.
\end{theorem}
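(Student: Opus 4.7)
My plan is to derive the theorem from the Abel map and cohomology machinery developed in \cite{NNI, NNII} combined with Wagreich's classical blow-up principle for the multiplicity of a normal surface singularity. Writing $\m_{X,o}\cdot\calO_{\tX}=\calL\cdot \cali$, with $\cali$ a coherent ideal co-supported at the base locus of $\calL$, the multiplicity decomposes as $-Z_{max}^2$ plus a correction encoded in $\cali$. The argument therefore splits into three pieces: (a) locate the base points; (b) describe $\cali$ locally at each base point; (c) assemble the multiplicity formula.

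For (a), only components with $(Z_{max},E_v)<0$ can carry base points, because $\deg(\calL|_{E_v})=-(Z_{max},E_v)$. To rule out base points at a node $E_u\cap E_v$, I would invoke the transversality of the Abel map at a nodal point in a generic $\tX$, in the spirit of \cite{NNII}: a nodal base point would impose a codimension-two condition on sections of $\calL$ that a generic analytic structure is designed to avoid. For the characterisation via $(*_v)$, the key is the cohomology-jump result of \cite{NNII}: the difference $h^0(\calL)-h^0(\calL(-l))$ is controlled by $\chi(Z_{max}+l)-\chi(Z_{max})$, and a base point along $E_v$ is equivalent to such a minimal jump being realised, i.e.\ to $\min_{l\ge E_v}\chi(Z_{max}+l)=\chi(Z_{max})+1$.

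For (b), near a base point $p\in E_v$ I would choose local coordinates so that $\{x=0\}$ is $E_v$. The count $-(Z_{max},E_v)$ of base points on $E_v$ should follow from comparing $h^0(\calL)$ with $h^0(\calL(-E_v))$ via Riemann--Roch on $E_v\simeq\bP^1$. For the local type, I would argue iteratively: $t(v)=m_v^+-m_v$ measures the maximal $E_v$-growth compatible with a minimal $\chi$-jump, and each unit of growth corresponds, via an infinitesimal Abel-map argument, to one further blow-up needed along $E_v$ to separate the proper transform of the generic member of $|\calL|$ from $E_v$. After $t(v)$ blow-ups the separation is achieved, showing $\cali_p=(x^{t(v)},y)$ for a transverse coordinate $y$.

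Finally, for (c), I would run Wagreich's formula on the resolution $\tX'$ obtained by successively resolving all base points: $\calO_{\tX'}(-Z'_{max})$ is base-point free there, so ${\rm mult}(X,o)=-(Z'_{max})^2$. Each chain of exceptional curves produced above a base point on $E_v$, together with a standard intersection-theoretic expansion of $(Z'_{max})^2$, contributes exactly $t(v)\cdot(-(Z_{max},E_v))$ to this self-intersection, yielding
$$
{\rm mult}(X,o)=-Z_{max}^2-\sum_{v}t(v)(Z_{max},E_v).
$$
The hardest part is step (b): translating the numerical quantity $t(v)=m_v^+-m_v$ into the ideal-theoretic statement $\cali_p=(x^{t(v)},y)$. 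This will require a careful infinitesimal analysis of the Abel map restricted to effective divisors through $p$, combined with a verification that the generic analytic structure produces no unexpected higher-order vanishing in the transverse direction $y$.
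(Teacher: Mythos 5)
Your top-level skeleton agrees with the paper's: locate the base points on the $E_v$ with $(Z_{max},E_v)<0$, show each base ideal is $(x^{t(v)},y)$ with $t(v)=m_v^+-m_v$, and then conclude by Wagreich's blow-up principle (the paper quotes exactly the refinement ${\rm mult}=-Z_{max}^2+\sum_p t(p)$ for $A$-type base points, which is your step (c)). However, the proposal stops short of the actual content at every hard point. First, excluding base points at nodes by saying a nodal base point is ``a codimension-two condition that a generic structure is designed to avoid'' is not an argument: the generic structure is defined by avoiding $h^1$-jump loci of restricted natural line bundles, and one must deduce the statement from that package. The paper does this cleanly by blowing up the node and using the numerical criterion $\chi(l')<\chi(l'+l)$ for absence of fixed components, which is stable under this blow-up, so the new $(-1)$-curve cannot be a fixed component. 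Second, your claimed equivalence ``base point on $E_v$ $\Leftrightarrow$ minimal $\chi$-jump'' hides the hard direction: when $\dim\,{\rm im}\bigl(H^0(\tX,\calL)\to H^0(E_v,\calL|_{E_v})\bigr)\ge 2$ it is not automatic that the base locus on $E_v$ is empty (a two-dimensional subsystem of $|\calO_{\bP^1}(d)|$ can well have base points); the paper's proof of this (part (3')) needs the Abel-map dominance/incidence-variety argument together with a deformation of the analytic structure. Likewise, the exact count $-(Z_{max},E_v)$ of base points needs a section whose divisor is reduced, smooth and transversal (part (2')), which in the paper is a theorem proved by induction on $p_g$ via Laufer-type deformations and relative Abel maps; your proposal simply assumes such a section.

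The most serious gap is your step (b), which you yourself flag as the hardest: nothing in the proposal supplies the mechanism that converts the numerical quantity $t(v)=m_v^+-m_v$ into the ideal-theoretic statement $\cali_p=(x^{t(v)},y)$. In the paper this is part (5'), and the key idea is completely absent from your sketch: one takes $s'=l'+l$ to be the minimal element of $\calS'_{an,[l']}$ with $s'\geq l'+E_v$ (whose $E_v$-multiplicity is $m_v^+$, by the maximality discussion in Remark \ref{rem:M+}) and proves that the generic sections of $\calL$ and $\calL(-l)$ have \emph{no common zero} on $E_v$; combined with the one-dimensionality of the image in $H^0(E_v,\calL|_{E_v})$ this sandwiches the ideal between $(y)$-type and $(x^{t(v)})$-type generators and gives $\cali_p=(x^{t(v)},y)$. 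That no-common-base-point statement (Proposition \ref{prop:GENERAL}) is the technical heart of the paper and is proved with Laufer duality, Leray residues of $2$-forms detecting the tangent direction in which a base point moves under explicit regluing deformations of $\tX$, rational line bundles, and a final counting contradiction; an ``iterative infinitesimal Abel-map argument'' is not a substitute for this, and without it the identification of the local type, hence the correction term in the multiplicity formula, is unproved.
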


\subsection{} Note that if we blow up the resolution graph $\Gamma$ we get a new graph, which determines the same
topological type of $(X,o)$.
If we associate generic analytic structures to both graphs then the structure of the base points can
be  identified isomorphically.  (However,  if we blow up a base point of a generic analytic
structure, then we modify  the type of the  base point, but the analytic structure obtained by blow up  the base point
will be not generic on its
supporting topological type.) For details see Remark \ref{rem:*}(b).

\subsection{} In fact, our results are more general. In order to be able to run an induction in the proof,
we need to consider a relative case of resolutions $\tX\subset \tX_{top}$, where $\tX_{top}$ is a fixed resolution
space with dual graph $\Gamma_{top}$,
and $\tX$ is a convenient
small neighbourhood of exceptional curves given by a subgraph $\Gamma$ of $\Gamma_{top}$. Furthermore, we
consider several line bundles as well:  the restrictions of the `natural line bundles' from
$\tX_{top}$  level  (with some positivity restriction regarding their Chern classes).
\subsection{}
The idea of the proof of our main theorem and of some other inductive procedures  have their origin in \cite{R}, where
the first author extends   the results of \cite{NNII} (valid for generic germs)
for the `relatively generic singularities'.
In this setup  one  fixes two resolution graphs $\Gamma_1 \subset \Gamma$ and a singularity $\tX_1$ with graph
$\Gamma_1$ and one studies a generic surface singularity $\tX$ with graph $\Gamma$ such that it has the sub--singularity $\tX_1$.
%In \cite{R} analytic invariants of $\tX$ like geometric genus or cohomology of natural line bundles are computed from the %resolution graph $\mathcal{T}_1$ and some analytic invariants of $\tX_1$.
%In this paper we prove our main theorem using similar ideas appearing in \cite{R}, namely we prove it by induction
%on the geometric genus of the singularity.
%In the induction step we usually think about a generic singularity $\tX$ with fixed resolution graph $\mathcal{T}$ as
%a realtively generic singularity corresponding to a generic subsingularity $\tX_1$ with some smaller resolution
%graph $\mathcal{T}_1 \subset \mathcal{T}$.
Although we follow intuitively the ideas and techniques developed in \cite{R},
 we try to be self-contained and use only a few results from \cite{R} without proof.

\subsection{} The structure of the article is the following.
In section \ref{s:prel} we collect preliminary definitions and  lemmas, and we recall the definition of (restricted)
natural line bundles. In section \ref{s:AnGen} we review the definition of the generic analytic structure
(based on the work of Laufer) and several results from \cite{NNII} regarding invariants for generic analytic structures.
Here we state the new results regarding the structure of base points
as well (Theorems \ref{th:NEW1} and \ref{th:NEW2}) formulated
in the general case of natural line bundles. Both theorems  are divided into five steps (geometric statements)
{\it (1')--(5')}.
The proof of {\it (1')} is already in this section.
Section \ref{s:ECD} contains  a review of  certain needed  material regarding the Abel maps from \cite{NNI}.
 Part {\it (2')}  is proved in section \ref{s:Proof2},
{\it (3')--(4')}  in section \ref{s:Proof3}, while
{\it (5')} in section \ref{s:Proof5}, after a review in section \ref {s:Prel5} of  the description of the
Abel map in terms of differential forms via Laufer's duality.
Section \ref{s:examples} contains some examples, which support the
theory. Section \ref{s:GEN-GEN} shows that the statements of the main results (formulated for
natural line bundles of generic singularities) remain valid for generic line bundles of arbitrary singularities  as well
(modulo a necessary assumption).
Here we explain also the expected relationship between natural line bundles of generic singularities and
the generic line bundles of arbitrary singularities.

\section{Preliminaries}\label{s:prel}

\subsection{The resolution}\label{ss:notation}
Let $(X,o)$ be the germ of a complex analytic normal surface singularity,
 and let us fix  a good resolution  $\phi:\widetilde{X}\to X$ of $(X,o)$.
We denote the exceptional curve $\phi^{-1}(0)$ by $E$, and let $\{E_v\}_{v\in\calv}$ be
its irreducible components. Set also $E_I:=\sum_{v\in I}E_v$ for any subset $I\subset \calv$.
For the cycle $l=\sum n_vE_v$ let its support be $|l|=\cup_{n_v\not=0}E_v$.
For more details see \cite{trieste,NCL,Nfive}.
\subsection{Topological invariants}\label{ss:topol}
Let $\Gamma$ be the dual resolution graph
associated with $\phi$;  it  is a connected graph.
Then $M:=\partial \widetilde{X}$, as a smooth oriented 3--manifold,
 can be identified with the link of $(X,o)$, it is also
an oriented  plumbed 3--manifold associated with $\Gamma$.
%Then $\widetilde{X}$, as a smooth manifold,
%serves as the plumbing  4--manifold associated with $\calt$,
%and  is the plumbed
%3--manifold (and also the `link' of $(X,o)$).
%A resolution is minimal if there is no rational $E_v$ with $E_v^2=-1$.
{\it We will assume  (for any singularity we will deal with) that the link
 $M$ is a rational homology sphere,}
or, equivalently,  $\Gamma$ is a tree with all genus
decorations  zero. We use the same
notation $\mathcal{V}$ for the set of vertices.
%Let  $\delta_v$ denote  the valency of a vertex $v$.
%, and $\mathcal{N}$ for the set of nodes, i.e. vertices with $\delta_v\geq 3$.
%Let $\widetilde{X}$ be the plumbed 4--manifold associated with
%$\mathcal{T}$, hence $\partial \widetilde{X} = M$.

The lattice $L:=H_2(\widetilde{X},\mathbb{Z})$ is  endowed
with a negative definite intersection form  $I=(\,,\,)$. It is
freely generated by the classes of 2--spheres $\{E_v\}_{v\in\mathcal{V}}$.
 The dual lattice $L':=H^2(\widetilde{X},\mathbb{Z})$ is generated
by the (anti)dual classes $\{E^*_v\}_{v\in\mathcal{V}}$ defined
by $(E^{*}_{v},E_{w})=-\delta_{vw}$, the opposite of the Kronecker symbol.
The intersection form embeds $L$ into $L'$. Then $H_1(M,\mathbb{Z})\simeq L'/L$, abridged by $H$.
Usually one also identifies $L'$ with those rational cycles $l'\in L\otimes \Q$ for which
$(l',L)\in\Z$ (or, $L'={\rm Hom}_\Z(L,\Z)\simeq H^2(\tX,\mathbb{Z})$), where the intersection form extends naturally.
%Each class $h\in H=L'/L$ has a unique representative $r_h=\sum_vr_vE_v\in L'$ in the semi-open cube
%(i.e. each $r_v\in \bQ\cap [0,1)$), such that its class  $[r_h]$ is $h$.
 %and denote the class of $x\in L'$ in $H$ by $[x]$.

All the $E_v$--coordinates of any $E^*_u$ are strict positive.
We define the Lipman cone as $\calS':=\{l'\in L'\,:\, (l', E_v)\leq 0 \ \mbox{for all $v$}\}$.
It is generated over $\bZ_{\geq 0}$ by $\{E^*_v\}_v$.
Hence, if $l'\in \calS\setminus \{0\}$ then all its $E_v$--coefficients are strict positive.
We also write $\calS:=\calS'\cap L$.

There is a natural partial ordering of $L'$ and $L$: we write $l_1'\geq l_2'$ if
$l_1'-l_2'=\sum _v r_vE_v$ with all $r_v\geq 0$. We set $L_{\geq 0}=\{l\in L\,:\, l\geq 0\}$ and
$L_{>0}=L_{\geq 0}\setminus \{0\}$.
We will write $Z_{min}\in L$ for the  {\it minimal} (or fundamental, or Artin) cycle, which is
the minimal non--zero cycle of $\calS$ \cite{Artin62,Artin66}.

We define the
  (anti)canonical cycle $Z_K\in L'$ via the {\it adjunction formulae}
$(-Z_K+E_v,E_v)+2=0$ for all $v\in \mathcal{V}$.
(In fact,  $Z_K=-c_1(\Omega^2_{\widetilde{X}})$, cf. (\ref{eq:PIC})).
In a minimal resolution $Z_K\in \calS'$.

Finally we consider the Riemann--Roch expression
 $\chi(l')=-(l',l'-Z_K)/2$ defined for any $l'\in L'$.

\subsection{Some analytic invariants. The Picard groups.}\label{ss:analinv}
The group ${\rm Pic}(\widetilde{X})$
of  isomorphism classes of analytic line bundles on $\widetilde{X}$ appears in the (exponential) exact sequence
\begin{equation}\label{eq:PIC}
0\to {\rm Pic}^0(\widetilde{X})\to {\rm Pic}(\widetilde{X})\stackrel{c_1}
{\longrightarrow} L'\to 0, \end{equation}
where  $c_1$ denotes the first Chern class. Here
$ {\rm Pic}^0(\widetilde{X})=H^1(\widetilde{X},\calO_{\widetilde{X}})\simeq
\C^{p_g}$, where $p_g$ is the {\it geometric genus} of
$(X,o)$. $(X,o)$ is called {\it rational} if $p_g(X,o)=0$.
 Artin in \cite{Artin62,Artin66} characterized rationality topologically
via the graphs; such graphs are called `rational'. By this criterion, $\Gamma$
is rational if and only if $\chi(l)\geq 1$ for any effective non--zero cycle $l\in L_{>0}$.

Similarly, if $Z\in L_{>0}$ is a non--zero effective integral cycle such that its support is $|Z| =E$,
and $\calO_Z^*$ denotes
the sheaf of units of $\calO_Z$, then ${\rm Pic}(Z)=H^1(Z,\calO_Z^*)$ is  the group of isomorphism classes
of invertible sheaves on $Z$. It appears in the exact sequence
  \begin{equation}\label{eq:PICZ}
0\to {\rm Pic}^0(Z)\to {\rm Pic}(Z)\stackrel{c_1}
{\longrightarrow} L'\to 0, \end{equation}
where ${\rm Pic}^0(Z)=H^1(Z,\calO_Z)$.
If $Z_2\geq Z_1$ then there are natural restriction maps,
${\rm Pic}(\widetilde{X})\to {\rm Pic}(Z_2)\to {\rm Pic}(Z_1)$.
Similar restrictions are defined at  ${\rm Pic}^0$ level too.
These restrictions are homomorphisms of the exact sequences  (\ref{eq:PIC}) and (\ref{eq:PICZ}).

\bekezdes \label{bek:BASE} {\bf Fixed components and  base points of line bundles.}
Fix some $Z\in L_{>0}$ with $|Z|=E$ and $\calL\in {\rm Pic}(Z)$. We say that $E_v$ is a fixed component
of $\calL$ if the natural inclusion $H^0(Z-E_v,\calL(-E_v))\hookrightarrow  H^0(Z,\calL)$ is an isomorphism.
In particular, $\calL$ has {\it no fixed components} at all if
\begin{equation}\label{eq:H_0}
H^0(Z,\calL)_{\reg}:=H^0(Z,\calL)\setminus \bigcup_v H^0(Z-E_v, \calL(-E_v))
\end{equation}
is non--empty. Let us use the same notation $\calL$ for the sheaf of sections of the
 line bunle $\calL$. If $\calL$ has no fixed components then there exists a sheaf of ideals $\cali_{\calL}$
 of $\calO_{\tX}$ such that $H^0(\tX,\calL)\cdot \calO_{\tX}=\calL\cdot \cali_{\calL}$, and
 $\cali_{\calL}$ is supported at  finitely many points of $E$. These are the base points of $\calL$.

We will refer to the next elementary lemma many times.

 \begin{lemma}\label{lem:BASE}
Assume  that $\calL\in {\rm Pic}(\tX)$ has no fixed components and $p\in E$ is a base point.
Let $b:\tX^{new}\to \tX$ be the blow up of $\tX$ at $p$  and set $E^{new}=b^{-1}(p)$.  Then

(a) if $p\in E_v$ then $(c_1(\calL),E_v)>0$,

(b) $H^0(\tX,\calL)=H^0(\tX^{new},b^*\calL)=H^0(\tX^{new},b^*\calL(-E^{new}))$,

(c) $h^1(\tX,\calL)=h^1(\tX^{new},b^*\calL)=h^1(\tX^{new},b^*\calL(-E^{new}))-1$.
 \end{lemma}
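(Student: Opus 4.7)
The lemma is a standard consequence of the relationship between cohomology on a surface and on its blow-up, so the plan is to reduce each of (a), (b), (c) to an elementary check involving the short exact sequence
\begin{equation*}
0\to b^*\calL(-E^{new})\to b^*\calL\to b^*\calL|_{E^{new}}\to 0,
\end{equation*}
together with the facts that $E^{new}\simeq \bP^1$, that $b^*\calL|_{E^{new}}$ is trivial of degree $0$ (since $E^{new}$ is contracted by $b$), and that $b_*\calO_{\tX^{new}}=\calO_{\tX}$, $R^ib_*\calO_{\tX^{new}}=0$ for $i\geq 1$.

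For (a), the plan is to use the `no fixed component' hypothesis to produce a global section $s\in H^0(\tX,\calL)$ whose restriction to $E_v$ is not identically zero (otherwise $E_v$ would be a fixed component in the sense of \ref{bek:BASE}, adapted from $Z$ to $\tX$). Since $p\in E_v$ is assumed to be a base point, this $s$ vanishes at $p$, hence its restriction to $E_v\simeq \bP^1$ is a nonzero section of $\calL|_{E_v}$ with a zero. As every nonzero section of a line bundle on $\bP^1$ of nonpositive degree is nowhere zero, we conclude $\deg \calL|_{E_v}=(c_1(\calL),E_v)\geq 1$.

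For (b), the first identification $H^0(\tX,\calL)=H^0(\tX^{new},b^*\calL)$ is the projection formula applied to $b_*\calO_{\tX^{new}}=\calO_{\tX}$. For the second identification, any section $\widetilde s\in H^0(\tX^{new},b^*\calL)$ descends to a section $s$ on $\tX$ that vanishes at the base point $p$, so its pullback $b^*s=\widetilde s$ vanishes along the whole exceptional divisor $E^{new}=b^{-1}(p)$; hence $\widetilde s\in H^0(\tX^{new},b^*\calL(-E^{new}))$, and the opposite inclusion is automatic.

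For (c), the first equality $h^1(\tX,\calL)=h^1(\tX^{new},b^*\calL)$ follows from the Leray spectral sequence, using $R^ib_*b^*\calL=\calL\otimes R^ib_*\calO_{\tX^{new}}=0$ for $i\geq 1$ (projection formula plus the vanishing recalled above). For the remaining equality, take the long exact sequence associated to the displayed short exact sequence; since $b^*\calL|_{E^{new}}\simeq \calO_{\bP^1}$ contributes $h^0=1$, $h^1=0$, and since by (b) the map $H^0(b^*\calL(-E^{new}))\to H^0(b^*\calL)$ is an isomorphism, the connecting map identifies $\bC$ with the kernel of $H^1(b^*\calL(-E^{new}))\to H^1(b^*\calL)$, which is also surjective. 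This yields $h^1(b^*\calL(-E^{new}))=h^1(b^*\calL)+1$. None of the steps is an obstacle; the only point requiring care is the bookkeeping in (b) to match `base point of $\calL$' with `pullback sections vanish on $E^{new}$', which is precisely what was used above.
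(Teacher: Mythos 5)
Your proof is correct and follows essentially the same route as the paper: Leray together with $R^1b_*b^*\calL=0$ for the first equalities, the identification of sections vanishing at the base point $p$ with sections of $b^*\calL(-E^{new})$ for (b), and the long exact sequence of $0\to b^*\calL(-E^{new})\to b^*\calL\to \calO_{E^{new}}\to 0$ for (c). Your argument for (a) is just the contrapositive of the paper's comparison of exact sequences (a nonzero section on $E_v\simeq\bP^1$ vanishing at $p$ forces positive degree), so nothing essentially new there either.
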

\begin{proof} Let $\m_{\tX,p}$ denote the maximal ideal of the local algebra $\calO_{\tX,p}$.
{\it (a)} If   $(c_1(\calL),E_v)\leq 0$ then comparison of the exact sequence
$0\to H^0(\m_p\calL)\to H^0(\calL)\to \C_p$ with $0\to H^0(\calL(-E_v))\to H^0(\calL)\to
H^0(\calL|_{E_v})$ would imply that $E_v$ is a fixed component.
For {\it (b)}--{\it (c)} notice that $R^0b_*(b^*\calL)=\calL$ and $R^1b_*(b^*\calL)=0$,
hence by Leray spectral sequence $H^*(\tX^{new}, b^*\calL)=H^*(\tX,\calL)$. Then identify
$0\to H^0(\tX,\m_{\tX,p}\calL)\to H^0(\tX,\calL)\to \C_p$ with
$0\to H^0(\tX^{new},b^*\calL(-E^{new}))\to H^0(\tX^{new}, b^*\calL)\to \C$.
\end{proof}

\begin{definition}\label{def:BASE}
A base point $p$ of $\calL$ is called of  $A_t$--type (for some integer $t\geq 1$)  if  $p$
is a regular point of $E$  and $\cali_{\calL,p}$  in the
local ring $\calO_{\tX,p}$ is $(x^t,y)$, where $x,y$ are some local coordinates of $(\tX,p)$
at $p$  with $\{x=0\}=E$ (locally).
We say that $p$ is of $A$--type
if it is $A_t$--type  for some $t\geq 1$. In such cases we write $t=t(p)$.
Note that  $A_1$--type means   $\cali_{\calL,p}= \m_{\tX,p}$.
\end{definition}
One verifies that  a base point $p$ is of $A$--type  if and only if $\cali_{\calL,p}\not \subset \m_{\tX,p}^2$.
%where $\m_{\tX,p}$ is the maximal ideal in $\calO_{\tX,p}$.
A  base point of $A_t$--type  has the following geometric picture.
If $s\in H^0(\tX,\calL)$ is a generic global section then its divisor $D$ in $(\tX,p)$ is  reduced, smooth and
transversal to $E$. Moreover,
if we blow up $\tX$ at $p$ then (via  the notations  of  Lemma \ref{lem:BASE})
$b^*\calL(-E^{new})$ has no fixed components,  and
on  $E^{new}$ it has  no base points in the $t=1$ case. If $t>1$ then
it has exactly one base point, namely at the intersection of $E^{new}$  with
the strict transform of $D$.  This base points is of $A_{t-1}$--type.

In particular, in order to eliminate a base point of type $A_t$  we need exactly $t$ successive blow ups.
At  all these steps Lemma \ref{lem:BASE} {\it (b)}--{\it (c)} applies.

We warn the reader that if a base point can be eliminated by $t$ successive blow ups then it is not
necessarily of $A_t$--type. (Take e.g. the ideal $\cali_{\calL,p}=\m_{\tX,p}^2$, which can be eliminated by
one blow up.)

\bekezdes\label{bek:NATLINEB} {\bf Natural line bundles.} \
The epimorphism  $c_1$ in (\ref{eq:PIC})
admits a unique group homomorphism section $L'\ni l'\mapsto s(l')\in {\rm Pic}(\widetilde{X})$,
 which extends the natural
section $l\mapsto \calO_{\widetilde{X}}(l)$ valid for integral cycles $l\in L$, and
such that $c_1(s(l'))=l'$  \cite{trieste,OkumaRat}.
% We write  $\calO_{\widetilde{X}}(l')$ for $s(l')$, and
We call $s(l')$ the  {\it natural line bundles} on $\widetilde{X}$ with Chern class $l'$.
By  the very  definition, $\calL$ is natural if and only if some power $\calL^{ n}$
of it has the form $\calO_{\tX}(l)$ for some $l\in L$.
We will use the uniform notation $\cO_{\tX}(l'):=s(l')$ for any $l'\in L'$.

The following fact will be used  several times:

\begin{lemma}\label{lem:BUnatline}
Consider the natural line bundle $\calO_{\tX}(l')\in{\rm Pic}(\tX)$ for $l'\in L'$.
Let $b:\tX^{new}\to \tX$ be the blow up of a point $p\in E$. Then
$b^*(\calO_{\tX}(l'))\in {\rm Pic}(\tX^{new})$ is natural,
in fact, it is $\calO_{\tX^{new}}(b^*(l'))$.
\end{lemma}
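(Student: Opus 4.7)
The plan is to exploit the characterization of natural line bundles recalled just before the statement: $\calL\in\pic(\tX)$ is natural if and only if some positive power $\calL^{n}$ equals $\calO_{\tX}(l)$ for an integral cycle $l\in L$. Since $L'/L\cong H$ is finite, for any $l'\in L'$ one may fix $n\geq 1$ with $nl'\in L$. The natural candidate $\calO_{\tX^{new}}(b^*l')\in\pic(\tX^{new})$ is defined by applying the analogous section $s^{new}:L'_{new}\to\pic(\tX^{new})$ on $\tX^{new}$ to the pullback class $b^*l'$.

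The key computation is to compare the $n$-th tensor powers of the two candidates on $\tX^{new}$. Since $s$ is a group homomorphism extending $l\mapsto\calO_{\tX}(l)$ on $L$, we have $\calO_{\tX}(l')^{n}=\calO_{\tX}(nl')$, the right-hand side being the literal divisorial line bundle of the integral cycle $nl'\in L$. Pulling back along $b$ and using the standard compatibility between pullback of divisorial line bundles on smooth surfaces and the total transform of divisors,
$$(b^*\calO_{\tX}(l'))^{n}\ =\ b^*\calO_{\tX}(nl')\ =\ \calO_{\tX^{new}}(b^*(nl')).$$
The same reasoning applied on $\tX^{new}$, using that $b^*(nl')=nb^*(l')\in L_{new}$, yields
$$\calO_{\tX^{new}}(b^*l')^{n}\ =\ \calO_{\tX^{new}}(nb^*l')\ =\ \calO_{\tX^{new}}(b^*(nl')).$$

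Hence $b^*\calO_{\tX}(l')$ and $\calO_{\tX^{new}}(b^*l')$ have the same Chern class $b^*(l')\in L'_{new}$ (because $c_1$ commutes with pullback) and the same $n$-th power; their ratio is therefore an $n$-torsion element of $\pic^0(\tX^{new})=H^1(\tX^{new},\calO_{\tX^{new}})\cong\C^{p_g}$. Since this is a complex vector space and thus torsion-free, the ratio is trivial and the two bundles coincide. The only substantive ingredient is torsion-freeness of $\pic^0$, which upgrades agreement of Chern classes together with agreement of $n$-th powers to equality on the nose; everything else is bookkeeping with how divisors transform under the blow-up of a point, and I expect no genuine obstacle.
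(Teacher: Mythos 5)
Your proof is correct and is essentially the paper's argument: the paper's one-line proof also reduces to integral cycles $l\in L$ (where pullback of divisorial bundles along the blow up is immediate) and relies on the torsion-freeness of $\pic(\tX^{new})$, exactly as you do via the $n$-th power comparison. No gap; the approaches coincide.
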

Indeed, it is enough to verify the statement for $l\in L$ in which case it is immediate.

If $Z\in L_{>0}$  with $|Z|=E$, then  we can define a similar section of (\ref{eq:PICZ}) by
$s_Z(l'):=  {\mathcal O}_{\widetilde{X}}(l')|_{Z}$.
These bundles  satisfy $c_1\circ s_Z={\rm id}_{L'}$. We write  ${\mathcal O}_{Z}(l')$ for $s_Z(l')$, and we call them
 {\it natural line bundles } on $Z$.

We also use the notations ${\rm Pic}^{l'}(\widetilde{X}):=c_1^{-1}(l')
\subset {\rm Pic}(\widetilde{X})$ and
${\rm Pic}^{l'}(Z):=c_1^{-1}(l')\subset{\rm Pic}(Z)$
respectively. Multiplication by $\calO_{\widetilde{X}}(-l')$, or by
$\calO_Z(-l')$, provides natural affine--space isomorphisms
${\rm Pic}^{l'}(\widetilde{X})\to {\rm Pic}^0(\widetilde{X})$ and
${\rm Pic}^{l'}(Z)\to {\rm Pic}^0(Z)$. (But, of course, multiplication by
 any other line bundle with the right Chern class might also realize the isomorphisms, the previous ones are
 `canonical'.)

\bekezdes \label{bek:ansemgr} {\bf The analytic semigroups associated with $\tX$.} \
By definition, the analytic semigroup (monoid)  associated with the resolution $\tX\to X$ is
\begin{equation}\label{eq:ansemgr}
\calS'_{an}:= \{l'\in L' \,:\,\calO_{\tX}(-l')\ \mbox{has no  fixed components}\}.
\end{equation}
It is a subsemigroup of $\calS'$. One also sets $\calS_{an}:=\calS_{an}'\cap L$, a subsemigroup
of $\calS$. In fact, $\calS_{an}$
consists of the restrictions   ${\rm div}_E(f)$ of the divisors
${\rm div}(f\circ \phi)$ to $E$, where $f$ runs over $\calO_{X,o}$. Therefore, if $s_1, s_2\in \calS_{an}$, then
${\rm min}\{s_1,s_2\}\in \calS_{an}$ as well (take the generic linear combination of the corresponding functions).
In particular,  for any $l\in L$, there exists a {\it unique} minimal
$s\in \calS_{an}$ with $s\geq l$.

Similarly, for any $h\in H=L'/L$ set $\calS'_{an,h}:\{l'\in \calS'_{an}\,:\, [l']=h\}$.
Then for any  $s'_1, s'_2\in \calS'_{an,h}$ one has
${\rm min}\{s'_1,s'_2\}\in \calS'_{an,h}$, and
for any $l'\in L'$   there exists a unique minimal
$s'\in \calS'_{an,[l']}$ with $s'\geq l'$.

\bekezdes \label{bek:HSF}  {\bf The Hilbert--Samuel function.}
 S. S.-T. Yau's {\it maximal ideal cycle}
$Z_{max}\in L$ can be defined either as the unique minimal element of $\calS_{an}\setminus \{0\}$
(or, as the unique minimal element of $\calS_{an}$ which is  $\geq E$, cf. \ref{bek:ansemgr}), or, as
 the  divisorial part of the pullback of the maximal ideal $\m_{X,o}\subset \calO_{X,o}$, i.e.
 $\phi^*{\m_{X,o}}\cdot \calO_{\widetilde{X}}=\calO_{\widetilde{X}}(-Z_{max})\cdot \cali$,
where $\cali$ is an ideal sheaf with 0--dimensional support \cite{Yau1}.
 In general, $Z_{min}\leq Z_{max}$ (but they can be different).
 % Note that $H^0(\calO_{\tX}(-Z_{max}))=\m_{X,o}$.
 By the base points of $\m_{X,o}$ associated with $\phi$
 we understand the base points of $\calO_{\tX}(-Z_{max})$. They  are described by  $\cali$.

%The vector space $\m_{X,o}/\m_{X,o}^2$ is called the {\it cotangent space of $\cO_{X,o}$}.
%Its dimension ${\rm emb}(X,o)$ is called the
%{\it embedding dimension of $(X,o)$}. It equals the minimal number of generators of
%the ideal $\m_{X,o}$.
%Geometrically, it is the smallest dimension $n$ of $(\bC^n,o)$,
%where $(X,o)$ can be embedded.

The {\it Hilbert--Samuel function } is defined  as %associates to an integer $k\geq 1$
 $f^{HS}(k):= \dim_\bC(\cO_{X,o}/\m_{X,o}^{k})$ for any $k\geq 1$. % E.g., $f^{HS}(1)=1$ and $f^{HS}(2)=1+\emb(X,o)$.
The {\it Hilbert--Samuel polynomial}
is the unique polynomial $P^{HS}(k)=a_2k^2/2+a_1k+a_0$  such that
$P^{HS}(k)=f^{HS}(k)$ for  $k$ sufficiently large.
The coefficient $a_2$ is the {\it multiplicity of $(X,o)$}, ${\rm mult}(X,o)$. Geometrically, it is the degree of the
{\it generic} map $(X,o)\to (\bC^2,0)$.
By \cite{Wa70} ${\rm mult}(X,o)\geq -Z_{max}^2$, and equality holds
exactly in those cases when  $\m_{X,o}$ has no base points with respect to
$\phi$. Moreover,
%\begin{lemma}
if all the base points of $\calO_{\tX}(-Z_{max})$  are of $A$--type  then
\begin{equation}\label{eq:multGEN}
{\rm mult} (X,o)=-Z_{max}^2+\sum_{p}t(p).
\end{equation}
%\end{lemma}
%Indeed, if $b$ is a blow up (as in Lemma \ref{lem:BASE})
% at such a base point then  $Z_{max}^{new}= b^*(Z_{max})+E^{new}$ and
%$(Z_{max}^{new})^2=Z_{max}^2-1$.

If for a certain resolution  the line bundle
$\calO_{\tX}(-Z_{max})$ has base points, then they  can be eliminated by
a convenient sequence of additional blow ups (infinitely close to the base points).
However, from the topological data,  in general,  it is not possible to
identify those resolutions for which $\calO_{\tX}(-Z_{max})$
has no base points (or, the structure of ideal sheaves $\cali$ of $\cO_{\tX}$ in the presence of base points).

\bekezdes\label{bek:restrnlb} {\bf Restricted natural line bundles.}
Regarding natural line bundles the following warning is appropriate.
Note that if $\tX_1$ is a connected small convenient  neighbourhood
of the union of some of the exceptional divisors (hence $\tX_1$ also stays as the resolution
of the singularity obtained by contraction of that union of exceptional  curves) then one can repeat the definition of
natural line bundles at the level of $\tX_1$ as well (as a splitting of (\ref{eq:PIC}) applied for
$\tX_1$). However,  the restriction to
$\tX_1$ of a natural line bundle of $\tX$ (even of type
$\calO_{\tX}(l)$ with $l$ integral cycle supported on $E$)  usually is not natural on $\tX_1$:
$\calO_{\tX}(l')|_{\tX_1}\not= \calO_{\tX_1}(R(l'))$
 (where $R:H^2(\tX,\Z)\to H^2(\tX_1,\Z)$ is the natural cohomological
 restriction), though their Chern classes coincide.

Therefore, in inductive procedures when such restriction is needed,
 we will deal with the family of {\it restricted natural line bundles}. This means the following.
We fix a resolution space $\tX_{top}$ with dual graph $\Gamma_{top}$. Then for any $\tX$,
a convenient small neighbourhood of the
exceptional curves indexed by the graph $\Gamma$ (a connected subgraph of $\Gamma_{top}$)
 the `restricted natural line bundles' in ${\rm Pic}(\tX)$ are
the restrictions to $\tX$ of the natural line bundles from ${\rm Pic}(\tX_{top})$. In this way,
for any $\tX_1$ ($\tX_1\subset \tX$, defined similarly as $\tX$) the restriction of these line bundles
from $\tX$ to $\tX_1$ are basically the restriction of natural line bundles from $\tX_{top}$, hence any induction based on restriction preserves the family stably. The same is valid when we consider instead of $\tX$ an
effective cycle $Z$ with connected support $|Z|\subset E$.

This basically means that we fix $\tX_{top}$
%a pair $\tX\subset \tX_{top}$, i.e.
%$\tX$ sits in a  conveniently (or arbitrarily) chosen $\tX_{top}$,
and we  consider the
tower of singularities (resolutions)  $\{\tX\}_{\tX\subset \tX_{top}}$,
or $\{\calO_Z\}_{|Z|\subset E_{top}}$,  and all the restricted
natural line bundles are restrictions  from the top level $\tX_{top}$.
We use the notations $\calO_{\tX}(l'_{top}):=\calO_{\tX_{top}}(l'_{top})|_{\tX}$ and $\calO_Z(l'_{top}):=\calO_{\tX_{top}}(l'_{top})|_Z$ respectively, where  $l'_{top}\in L'(\tX_{top})$.

If for some reason we need a blow up $b:\tX^{new}\to \tX$  at some $p\in E\subset \tX$, then the pull back bundle
$b^*(\calO_{\tX_{top}}(l'_{top})|_{\tX})$ is again a `restricted natural line bundle', namely
$\calO_{\tX_{top}^{new}}(b_{top}^*(l'_{top}))|_{\tX^{new}}
$, where $b_{top}:\tX_{top}^{new}\to \tX_{top}$
 is the blow up of $\tX_{top}$ at $p$ (cf. Lemma \ref{lem:BUnatline}).

 In particular, we obtain a compatible family of line bundles, well--defined and indexed by the Chern classes,
 which are stable with respect to blow up and restrictions (in the towers as above).

%In most of the discussion regarding the tower $\{\tX_1\}_{\tX_1\subset \tX}$ we will not specify (even mention)
%the top level $\tX_{top}$.

Though the next statement is elementary, it is a key ingredient in several arguments.

 The line bundle $\calO_{\tX_{top}}(l'_{top})\in {\rm Pic}(\tX_{top})$
depends on its Chern class $l'_{top}$ (as combinatorial data) but definitely also
 on the analytic type of $\tX_{top}$. When we restrict it to $\tX$, and we vary the analytic structure of $\tX_{top}$ with
the analytic structure of $\tX$ fixed, the bundle
 $\calO_{\tX_{top}}(l'_{top})|_{\tX}\in \pic(\tX)$ might vary in the fixed
 ${\rm Pic}(\tX)$. The next lemma aims to reduce the dependence of
 $\calO_{\tX_{top}}(l'_{top})|_{\tX}$  on the analytic structure of $\tX_{top}$ to the analytic type of
 the pair $(\tX,\tX\cap E_{top})$.

\begin{lemma}\label{lem:resNat}
The restriction
$\calO_{\tX_{top}}(l'_{top})|_{\tX}\in \pic(\tX)$ depends only on the  Chern class $l'_{top}$,
on the analytic type of $\tX$,  and on the analytic type of the
non--compact divisor $E_{top}\cap \tX$ of $\tX$.
\end{lemma}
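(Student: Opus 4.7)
The plan is to reduce first to the case of an integer exceptional cycle, and then to upgrade to arbitrary $l'_{top}\in L'(\tX_{top})$ by invoking torsion--freeness of $\pic^0(\tX)$.

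First I would treat the case $l_{top}\in L(\tX_{top})$ integral. Here $\calO_{\tX_{top}}(l_{top})$ is literally the line bundle of the Cartier divisor $l_{top}$ on $\tX_{top}$, so its restriction to $\tX$ is $\calO_{\tX}(l_{top}|_{\tX})$, where
\[
l_{top}|_{\tX}\;=\;l_{top}^{in}\;+\;l_{top}^{out}|_{\tX},
\]
the first summand being the part of $l_{top}$ supported on $E\subset\tX$, and the second a Cartier divisor on $\tX$ supported on $E_{top}\cap\tX$ with multiplicities equal to the outside coefficients of $l_{top}$. The summand $l_{top}^{in}$ depends only on the analytic structure of $\tX$, while $l_{top}^{out}|_{\tX}$ depends only on the analytic type of $E_{top}\cap\tX$ as a divisor in $\tX$. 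Hence $\calO_{\tX}(l_{top}|_{\tX})$ depends only on the data listed in the lemma.

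Next, for general $l'_{top}\in L'(\tX_{top})$, pick a positive integer $n$ with $n\,l'_{top}\in L(\tX_{top})$. Since the assignment $l'\mapsto \calO_{\tX_{top}}(l')$ is a group homomorphism section of the Chern class map,
\[
\big(\calO_{\tX_{top}}(l'_{top})|_{\tX}\big)^{n}\;=\;\calO_{\tX_{top}}(n\,l'_{top})|_{\tX},
\]
which depends only on the admissible data by the previous step. The Chern class $c_1\big(\calO_{\tX_{top}}(l'_{top})|_{\tX}\big)\in H^2(\tX,\Z)$ is the combinatorial restriction of $l'_{top}$, hence also determined. If two line bundles $M_1,M_2\in\pic(\tX)$ share the same Chern class and the same $n$-th power, then $M_1\otimes M_2^{-1}$ is an $n$-torsion element of $\pic^0(\tX)=H^1(\tX,\calO_{\tX})\cong\C^{p_g(\tX)}$, which is torsion--free, so $M_1=M_2$. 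This gives the desired uniqueness.

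The only delicate point I foresee is checking that the summand $l_{top}^{out}|_{\tX}$ really depends only on the analytic embedding of $E_{top}\cap\tX$ inside $\tX$ and not on the ambient $\tX_{top}$. This holds because near $\tX$ each component $E_v$ with $v\notin\calv(\tX)$ meets $\tX$ precisely along its contribution to the non--compact divisor $E_{top}\cap\tX$, so the local Cartier equation of $l_{top}^{out}|_{\tX}$ is intrinsic to the pair $(\tX,E_{top}\cap\tX)$.
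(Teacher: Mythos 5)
Your proof is correct and follows essentially the same route as the paper: the paper's argument is exactly the reduction to integral $l'_{top}\in L(\tX_{top})$ via torsion--freeness of $\pic(\tX)$, with the integral case read off from the definitions, which is what you spell out via the decomposition into the part supported on $E$ and the part supported on the non--compact divisor. No gap to report.
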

\begin{proof}
Since $\pic(\tX)$ has no torsion, it is enough to argue  for $l'_{top}\in L(\tX_{top})$
(identified with an integral cycle
supported on $E_{top}$), in which case the statement follows from the definitions.
\end{proof}

\section{Analytic invariants of generic analytic type}\label{s:AnGen}

\subsection{}
Let us comment first the definition  of `generic' analytic type.
The point is that for  a fixed topological type the moduli space  of
all analytic structures supported by that fixed topological type (of a singularity), is not yet described in
the literature. Similarly,
for a fixed resolution graph $\Gamma$, the moduli space of all analytic structures (or resolution spaces $\tX$)
having dual graph $\Gamma$ is again unknown.
 Hence, we cannot define our generic structure as a generic point of such moduli  spaces.
 %In fact,
%Laufer in \cite{Laufer73} characterized those topological types which support only one
%analytic type, but about the moduli space of analytic structures for arbitrary topological types
% very little is known.
However,  Laufer in \cite{LaDef1}
 defined {\it local complete deformations}  of resolution of singularities.
 For a given resolution $\tX\to X$ with dual graph $\Gamma$, the base space of this deformation space
 parametrizes all the possible (local) deformations of the analytic structure of $\tX$ (with fixed topological type $\Gamma$).
 This parameter space is  the basic  tool in our `working definition', cf. \cite{NNII} and \ref{ss:genan} below.

\bekezdes\label{ss:genan} {\bf The working definition of
the `generic analytic type'.} Usually when we have a parameter space
for a family of geometric objects, the `generic object' might depend essentially
on the fact that  what kind of
anomalies we wish to avoid. Accordingly,  we determine a discriminant space
of the non--wished objects, and generic means elements from  its complement.
In the present article, following \cite{NNII},
 all the discrete  analytic invariants
we treat are basically guided by the cohomology groups of the restricted
natural line bundles associated with a resolution.
Hence, the discriminant spaces
(sitting in the base space of complete deformation spaces of Laufer  \cite{LaDef1}, parametrizing deformations
of  a pair $\tX\subset \tX_{top}$ with fixed dual graphs,
are defined as the `jump loci' of the first--cohomology groups of the restricted natural line bundles
at all levels of the tower $\{\tX_1\subset \tX\}_{\tX_1}$, cf. \ref{bek:restrnlb}.
(Usually, guided by a specific geometrical problem --- e.g.
 the maximal ideal and properties of $Z_{max}$ ---,
we have to consider only finitely many Chern classes, hence only finitely many such bundles/discriminants too.)
A {\it generic  analytic structure}  avoids all such discriminants.

In particular, the definition of the generic analytic type is linked with some distinguished resolution
pair $\tX\subset \tX_{top}$.
% (and it is  generic among the possible resolution spaces with the same dual graphs).
(However, this distinguished pair  can be replaced by a new one, generic as well,  if this new one is obtained
from the distinguished one e.g. by a  blow up at a {\it generic} point of $E\subset \tX$, see \ref{lem:BLOWUP}.
Furthermore, in the situation  $\tX_1\subset \tX\cap \tX_{1,top}$, $\tX_{1,top}\subset \tX_{top}$ (cf.
 \ref{bek:restrnlb}), when  $\tX\subset \tX_{top}$ is generic,
  then $\tX_1\subset \tX_{1, top}$ is automatically generic as well.)

The consideration  of a {\it pair} is motivated by the fact  that the notions associated with pairs
behave properly in inductive steps.
(As was explained in \ref{bek:restrnlb},
 even if we start with $\tX_{top}=\tX$ and natural line bundles of $\tX$, if we need to
 restrict them
to some $\tX_1\subset \tX$, we face the situation of restricted bundles associated with the pair $\tX_1\subset \tX$.)
However, once the theorem is proved by induction based on the relative setup, a posteriori,
 in most concrete applications we choose $\tX_{top}=\tX$.  In this latter case
we speak about the generic analytic structure of $\tX$
 with fixed dual graph $\Gamma$ (and about properties of genuine natural line bundles on $\tX$).
% A singularity
%$(X,o)$ has a {\it generic analytic type} if admits a resolution with dual graph $\Gamma$, which is  generic.
%(avoids the `jump loci' discriminants).
For more see \cite{NNII}.

In a slightly simplified language we can regard  the generic analytic structure in the following way as well.
Fix a graph $\Gamma$. For each $E_v$ ($v\in\calv$) the disc bundle with Euler number $E_v^2$ is taut:
it has no analytic moduli. The generic $\tX$ is obtained by gluing `generically' these bundles according to the edges of $\Gamma$ as an analytic plumbing.

\subsection{Review of some results of \cite{NNII}}\label{ss:ReviewNNII}

 The list of analytic invariants, associated with a generic analytic type
  (with respect to a fixed resolution graph),
 which in \cite{NNII} are described topologically,    include  the following ones:
 $h^1(\calO_Z)$, $h^1(\calO_Z(l'))$ (with certain restriction on the Chern class $l'$),
  --- this last one applied  for $Z\gg 0$ provides  $h^1(\calO_{\tX})$
 and  $h^1(\calO_{\tX}(l')$) too ---,
the multivariable Hilbert function
 $L\ni l\mapsto \mathfrak{h}(l)$,  % associated with the divisorial filtration,
the analytic semigroup, %the cohomological cycle
and the maximal ideal cycle of $\tX$.
See above or \cite{CDGPs,CDGEq,Lipman,Nfive,NPS,NCL,Ok,MR} (or Theorem \ref{th:OLD})
for the definitions and relationships between them.
The topological characterizations use the RR--expression $\chi:L'\to \bQ$.

In the next theorem  the bundles $\calO_{\tX}(-l')$ are the `genuine  natural line bundles'
associated with $\tX$ and $l'\in L'$. (For the general case $\tX\subset \tX_{top}$ see
\ref{bek:TOWER}.)
It says  (like  several other statements
 regarding generic analytic structure and restricted natural line bundles) that these bundles behave cohomologically
as the generic line bundles of ${\rm Pic}^{-l'}(\tX)$ (for more comments  see \cite{NNII}, and also
Theorem \ref{th:B}{\it (II)} here).

\begin{theorem}\cite[Theorem A]{NNII}\label{th:OLD}
 Fix a resolution graph (tree of $\bP^1$'s) and assume that the analytic type of  $\tX$ is generic.
 In parts (a)--(b) we assume that $Z$ is an effective cycle $Z\in L_{>0}$. Then
the following identities hold:\\
(a) For any  $Z\in L_{>0}$ with connected support $|Z|$
\begin{equation*}
h^1(\calO_Z) = 1-\min_{0< l \leq Z,l\in L}\{\chi(l)\}.
\end{equation*}
(b) If $l'=\sum_{v\in \calv}l'_vE_v \in L'$ satisfies
$l'_v >0$ for any $E_v$ in the support of $Z$ then
\begin{equation*}
h^1(Z,\calO_Z(-l'))=\chi(l')-\min _{0\leq l\leq Z, l\in L}\, \{\chi(l'+l)\}.
\end{equation*}
%(For a characterization valid for more general Chern classes $l'$ see section \ref{s:appli}.)\\
(c) If $p_g(X,o)=h^1(\tX,\calO_{\tX})$ is the geometric genus of $(X,o)$ then
\begin{equation*}
p_g(X,o)= 1-\min_{l\in L_{>0}}\{\chi(l)\} =-\min_{l\in L}\{\chi(l)\}+\begin{cases}
1 & \mbox{if $(X,o)$ is not rational}, \\
0 & \mbox{else}.
\end{cases}
\end{equation*}
(d) More generally, for any $l'\in L'$
\begin{equation*}
h^1(\tX,\calO_{\tX}(-l'))=\chi(l')-\min _{l\in L_{\geq 0}}\, \{\chi(l'+l)\}+
%\epsilon(l'),
%\end{equation*}
%where
%\begin{equation*}
%\epsilon(l')=
\begin{cases}
1 & \mbox{if \ $l'\in L_{\leq 0}$ and $(X,o)$ is not rational}, \\
0 & \mbox{else}.
\end{cases}
\end{equation*}
(e) For $l\in L$ set $\mathfrak{h}(l)=\dim ( H^0(\tX, \calO_{\tX})/ H^0(\tX, \calO_{\tX}(-l)))$.
% the $l$--coefficient  of the %$H(\bt)=\sum _{l\in L} \mathfrak{h}(l)\bt^{l}$
% multivariable Hilbert series associated with the divisorial filtration induced by the exceptional divisors of %$\phi$.
Then $\mathfrak{h}(0)=0$ and for $l_0>0$ one has
%Write $l'$ as $r_h+l_0$ for some $l_0\in L$ and
% $r_h\in L' $ the unique representative of $h=[l']$ in the semi-open cube of $L'$.
%Then  $\mathfrak{h}(r_h)=0$ for $l_0=0$. Furthermore, for $l_0>0$ and $h\not=0$
%\begin{equation*}
%\mathfrak{h}(l_0)=
%\min_{l\in L_{\geq 0}} \{\chi(l_0+l)\}-\min_{l\in L_{\geq 0}} \{\chi(r_h+l)\}.
%\end{equation*}
%For $h=0$ and $l'=l_0>0$
\begin{equation*}
\mathfrak{h}(l_0)=\min_{l\in L_{\geq 0}} \{\chi(l_0+l)\}-\min_{l\in L_{\geq 0}} \{\chi(l)\}+
\begin{cases}
1 & \mbox{if $(X,o)$ is not rational}, \\
0 & \mbox{else}.
\end{cases}
\end{equation*}
%(f)
%Write the multivariable equivariant Poincar\'e series $P(\bt)=-H(\bt)\cdot \prod_{v\in\calv}
%(1-t_v^{-1})$ as $\sum_{l'\in\calS'}\mathfrak{p}(l')\bt^{l'}$.
%It is supported in the Lipman (antinef) cone, in particular in $L'_{\geq 0}$.
% Then
%$\mathfrak{p}(0)=1$ and for $l'>0$ one has
%$$\mathfrak{p}(l')= \sum_{I\subset \calv} (-1)^{|I|+1}\, \min_{ l\in L_{\geq 0}}
%\chi(l'+l+E_I).$$
(f) \ %Consider  the   analytic semigroup
%$\calS'_{an}:= \{l'\in L' \,:\,\calO_{\tX}(l')\ \mbox{has no  fixed components }\}\subset \calS'$. Then
$\calS'_{an}= \{l'\,:\, \chi(l')<
\chi(l' +l) \ \mbox{for any $l\in L_{>0}$}\}\cup\{0\}$.\\
(g)
 Assume that $\Gamma$ is a non--rational graph  and set
$\calm=\{ Z\in L_{>0}\,:\, \chi(Z)=\min _{l\in L}\chi(l)\}$.
Then %the unique minimal element of $\calm$ is the cohomological cycle, while
the unique maximal element of $\calm$ is the maximal ideal cycle of\, $\tX$.

(Note that in the above formulae one also has $\min _{l\in L_{\geq 0}}\{\chi(l)\}=\min _{l\in L}\{\chi(l)\}$.)
\end{theorem}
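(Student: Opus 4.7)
The plan is to prove (a)–(d) simultaneously by induction on cycles (and on Chern classes), using two complementary inputs from Laufer's complete deformation space of analytic structures on $\tX$: an \emph{upper bound} on $h^1$ from upper-semicontinuity along the deformation (so the generic $h^1$ equals the minimum value over the base), and a \emph{lower bound} obtained via the Riemann–Roch identity $h^0 - h^1 = \chi_{\mathrm{coh}}$ once $H^0$ has been controlled. Parts (e)–(g) are then combinatorial consequences of (a)–(d).

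For the core statement (a), the convexity
\[
\chi(Z_1) + \chi(Z_2) \geq \chi(\min\{Z_1, Z_2\}) + \chi(\max\{Z_1, Z_2\})
\]
implies that the set of sub-cycles of $Z$ where $\chi$ attains its minimum is closed under $\max$, hence has a unique maximal element $Z^*$. The short exact sequence
\[
0 \to \calO_{Z - Z^*}(-Z^*) \to \calO_Z \to \calO_{Z^*} \to 0
\]
yields $h^1(\calO_Z) = h^1(\calO_{Z^*})$ provided $H^1(\calO_{Z - Z^*}(-Z^*)) = 0$ on the generic structure. The strict inequality $\chi(Z^* + l) > \chi(Z^*)$ for $0 < l \leq Z - Z^*$ (by maximality of $Z^*$) is the combinatorial positivity that makes this vanishing plausible; an induction along a filtration $Z^* = Z_0 < Z_1 < \cdots < Z_N = Z$ adding one $E_v$ at each step, combined with semicontinuity and an explicit transversal plumbing that realizes the lower bound at each step, proves it. In parallel, $h^1(\calO_{Z^*}) = 1 - \chi(Z^*)$ is established by induction on $|Z^*|$ via $0 \to \calO_{E_v}(-Z^*_{\mathrm{prev}}) \to \calO_{Z^*} \to \calO_{Z^*_{\mathrm{prev}}} \to 0$ with the same genericity input.

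Parts (b)–(d) run the same machine twisted by $-l'$: the hypothesis $l'_v > 0$ on $|Z|$ in (b) substitutes for strict effectiveness and keeps the inductive step alive. Part (c) is the $Z \gg 0$ limit of (a); (d) combines (b) with this limit, the correction $+1$ appearing precisely when $l' \in L_{\leq 0}$ and $(X,o)$ is non-rational, since then the constants $H^0(\calO_{\tX}) = \mathbb{C}$ still inject into $H^0(\calO_{\tX}(-l'))$. For (e), substitute (d) into $\mathfrak{h}(l_0) = h^0(\calO_{\tX}) - h^0(\calO_{\tX}(-l_0))$ and simplify via RR. For (f), $l' \in \calS'_{an}$ iff $\calO_{\tX}(-l')$ admits a global section with no $E_v$ in its divisor, which by (d) applied to both $l'$ and $l' + l$ (for $l > 0$) is the stated strict inequality. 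Finally (g) follows from (f): the max-closure of $\calm$ produces a unique maximal element $Z^*$; it lies in $\calS_{an}$ by (f), meets every $E_v$ in the non-rational case, and is minimal with this property, so $Z^* = Z_{\max}$.

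The main obstacle is the vanishing $H^1(\calO_{Z - Z^*}(-Z^*)) = 0$ for generic analytic structure: this is not a formal consequence of the combinatorics — it fails for special analytic structures — and is the one place where genericity is used essentially. Its verification needs either an explicit construction of a transversal plumbing on $\tX$ where the cohomological dimension attains its topological lower bound (semicontinuity then propagates the conclusion generically across the Laufer base), or the infinitesimal Abel-map theory from \cite{NNI} via Laufer duality with holomorphic 2-forms, which translates the cohomology jump into a transversality statement about the differential of $c^{l'}: \eca^{l'} \to \pic^{l'}$ at a generic divisor.
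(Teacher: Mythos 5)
First, a structural point: this paper does not prove Theorem \ref{th:OLD} at all --- it is quoted as Theorem A of \cite{NNII} --- so there is no in-paper proof to compare against; the comparison has to be with the argument of \cite{NNII}. Your combinatorial superstructure is sound and matches the standard reductions: the convexity $\chi(Z_1)+\chi(Z_2)\ge \chi(\min)+\chi(\max)$ (the defect is $(a,b)\ge 0$ for the disjointly supported excesses) does give a unique maximal minimizer; the right-hand sides of (a)--(b) are lower bounds for $h^1$ for \emph{every} analytic structure (restrict to subcycles, use $h^0\ge 1$, resp.\ $h^1\ge -\chi$ of the restricted bundle); (e) follows from (c)--(d) via the cohomology sequence of $0\to\calO(-l_0)\to\calO\to\calO_{l_0}\to 0$; (f) follows from (d) by testing fixed components vertex by vertex; and (g) follows from (f) together with the min/max-convexity argument showing that every $s\in\calS_{an}\setminus\{0\}$ dominates the maximal element of $\calm$ (you assert the minimality without this argument, but it is routine).

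Two genuine gaps remain. (1) The entire analytic content --- that the generic structure attains the topological lower bound, equivalently the vanishings $H^1(\calO_{Z-Z^*}(-Z^*))=0$ and $h^0(\calO_{Z^*})=1$ --- is not proved but only located; ``either an explicit transversal plumbing or the Abel-map theory'' is precisely where the theorem lives. In \cite{NNII} this is done by the second route: one proves (Theorem \ref{th:B} here) that restricted natural line bundles of a generic pair are cohomologically indistinguishable from generic line bundles with the same Chern class, whose $h^1$ is computed in \cite{NNI} from the codimension of the image of the Abel map; the proof is an induction on $p_g$ using exactly the blow-up/regluing perturbations that the present paper redeploys in Sections \ref{s:Proof2}--\ref{s:Proof5}. (2) Your induction ``adding one $E_v$ at a time'' tacitly assumes that the bundles appearing at each step are still natural line bundles of the smaller cycle or space. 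They are not: as the paper stresses in \ref{bek:restrnlb}, the restriction of a natural line bundle is in general not natural on the subspace, which is why the statement must be run for the relative family of \emph{restricted} natural line bundles attached to a pair $\tX\subset\tX_{top}$, and why the hypothesis $l'_v>0$ on $|Z|$ appears in (b). Concretely, your appeal to (b) for $\calO_{Z-Z^*}(-Z^*)$ can fail its own hypothesis when $Z^*$ has zero coefficients on $|Z-Z^*|$. Both gaps are repairable, but they are the actual substance of the proof rather than loose ends.
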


\begin{remark}\label{rem:GENAN}
By part {\it (g)} of Theorem \ref{th:OLD}  for a generic analytic structure $\tX$  one has $\chi(Z_{max})=\min_{l\in L}\{\chi(l)\}$. Note that $\min_{l\in L}\{\chi(l)\}$ is independent on the choice of the resolution graph, it is a
topological invariant of the singularity (denoted in the sequel by $\min\chi$).

Let us assume that $\calO_{\tX}(-Z_{max})$ of a  generic analytic structure $\tX$ has a base point $p\in E_v$,
where $p$ is a regular point of $E$.
Then, if we blow up $\tX$ at $p$ we get a new resolution, say $\tX^{new}$, with dual graph $\Gamma^{new}$. Write the blow up as $b:\tX^{new}\to \tX$, $b^{-1}(p)=E^{new}$. Then
$(b\circ \phi)^*\m_{X,o}\cdot \calO_{\tX^{new}}=\calO_{\tX^{new}}(-b^*Z_{max}-kE^{new})\cdot \cali^{new}$ for some $k\in\Z_{\geq 1}$. Hence, the maximal ideal cycle of $\tX^{new}$ is $Z_{max}^{new}=b^*Z_{max}+kE^{new}$. However,
$\chi(b^*Z_{max}+kE^{new})=\chi(Z_{max})+k(k+1)/2>\min\chi$. In particular, $\tX^{new}$ and $Z_{max}^{new}$ do
 not satisfy {\it (g)} (and several other properties of Theorem \ref{th:OLD}).
This is compatible with the fact that  $\tX^{new}$ is {\it not generic with
respect to the new graph $\Gamma^{new}$}.
(Recall that the center of the blow up was a special point, a base point associated with $\tX$.)

On the other hand, if we take a generic structure, say $\tX^{new}_{gen}$  supported on $\Gamma^{new}$, then
$E^{new}$ can be contracted in this case too, and one gets a resolution $\tX^{new}_{gen}/E^{new}$. In this case the
point $p$ (the image of $E^{new}$) cannot be a base point (since {\it (g)} is valid for $\tX^{new}_{gen}$ as well),
in fact it is a generic point of $E_v$.
(As $\tX^{new}_{gen}$ is constructed via a  generic analytic plumbing,
the gluing point $E_v\cap E^{new}$ is
 also generic on $E_v$.) For further references we highlight this statement.
\end{remark}
\begin{lemma}\label{lem:BLOWUP}
If the pair   $\tX\subset \tX_{top}$ is  generic (with respect to $\Gamma\subset \Gamma_{top}$),
and $p$ is a generic point of $E$,
then the blow up $\tX^{new}\subset \tX_{top}^{new}$ of $\tX\subset \tX_{top}$
at $p$ produces a generic pair. % (with respect to $\Gamma^{new}\subset \Gamma^{new}_{top}$).
\end{lemma}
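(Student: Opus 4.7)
Recall that by the working definition (\ref{ss:genan}), the pair $\tX\subset\tX_{top}$ is generic iff it lies outside a countable collection of discriminants $D_{(Z,l'_{top})}\subset \mathrm{Def}(\tX_{top})$ in Laufer's local complete deformation space, where each $D_{(Z,l'_{top})}$ is the jump locus of $h^1(Z,\calO_{\tX_{top}}(l'_{top})|_Z)$, with $Z$ ranging over effective cycles supported on subconfigurations of $E_{top}$ and $l'_{top}\in L'(\tX_{top})$. I want to show that the blown--up pair $\tX^{new}\subset\tX^{new}_{top}$ avoids all analogous discriminants in $\mathrm{Def}(\tX^{new}_{top})$.

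The main geometric input is the contraction of the exceptional $(-1)$--curve. Given any deformation of $\tX^{new}_{top}$, its $(-1)$--curve $E^{new}$ can be contracted analytically, producing a deformation of $\tX_{top}$ together with the image point on the exceptional divisor. This yields a natural morphism
\[ \beta:\mathrm{Def}(\tX^{new}_{top})\longrightarrow \mathrm{Def}(\tX_{top}), \]
whose fibre over a point representing $\tX_{top}'$ is (a neighbourhood of) the blow--up locus on the relevant exceptional component of $\tX_{top}'$. Thus, locally around our base point, $\beta$ is an analytic fibration with smooth $1$--dimensional fibre $E_v$, and one has a local identification $\mathrm{Def}(\tX^{new}_{top})\cong \mathrm{Def}(\tX_{top})\times E_v$, the second factor recording the position of the blown--up point.

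Next, by Lemma~\ref{lem:BUnatline} every restricted natural line bundle on $\tX^{new}_{top}$ has the form $\calO_{\tX^{new}_{top}}(b_{top}^{*}l'_{top} - kE^{new})$ for some $l'_{top}\in L'(\tX_{top})$, $k\in\Z$. I would split the analysis according to a cycle $Z^{new}$ on $\tX^{new}_{top}$: (a) if $E^{new}\not\subset|Z^{new}|$ then $Z^{new}$ is identified with a cycle $Z$ on $\tX_{top}$ and the relevant $h^1$ reduces to one on $\tX_{top}$, depending only on the $\mathrm{Def}(\tX_{top})$--factor and hence avoided because $\tX$ is generic; (b) if $E^{new}\subset|Z^{new}|$, set $Z=b_*Z^{new}$ and apply Leray for $b:\tX^{new}_{top}\to\tX_{top}$ (with $R^0 b_*\calO(-kE^{new})=\m_p^k$ and $R^1 b_*\calO(-kE^{new})=0$ for $k\geq 0$) to obtain
\[
h^1\bigl(Z^{new},\calO_{\tX^{new}_{top}}(b^{*}l'_{top}-kE^{new})|_{Z^{new}}\bigr)
\;=\; h^1\bigl(Z,\calO_{\tX_{top}}(l'_{top})|_Z\otimes\m_p^k\bigr).
\]
The right--hand side is upper semicontinuous in $p\in E_v$, so attains its minimum on a Zariski--open dense subset of $E_v$.

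To finish, I would verify that this ``minimum at generic $p$ for fixed generic $\tX$'' coincides with the globally generic value of $h^1$ over all of $\mathrm{Def}(\tX^{new}_{top})$. By upper semicontinuity on $\mathrm{Def}(\tX^{new}_{top})$ the globally minimal value is achieved on a Zariski--open $U^{new}$; since $\beta$ is a submersion (an open map) and the locus of generic $\tX$ is dense in $\mathrm{Def}(\tX_{top})$, the image $\beta(U^{new})$ meets this generic locus. Hence for a generic $\tX$ there exists some $p\in E_v$ with $(\tX,p)\in U^{new}$, and by fibrewise semicontinuity the set of such $p$ is Zariski open and dense in $E_v$. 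Intersecting the countably many open dense loci (one per triple $(Z^{new},l'_{top},k)$) with the open condition that $\tX$ itself is generic preserves density, producing the generic pair $\tX^{new}\subset\tX^{new}_{top}$. The expected main obstacle is establishing the fibration/openness of $\beta$ and matching its fibre with the actual versal direction of pairs $\tX^{new}\subset\tX^{new}_{top}$ obtained by moving the blown--up point---an application of Laufer's versality---rather than any new cohomological computation.
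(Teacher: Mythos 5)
The decisive problem is in your final step, and it is a quantifier issue rather than a technicality. Genericity of the blown-up pair means avoiding the jump loci of the restricted natural line bundles attached to $\Gamma^{new}\subset\Gamma^{new}_{top}$, and the lemma requires this for \emph{every} pair $\tX\subset\tX_{top}$ avoiding the discriminants of \ref{ss:genan} and every generic $p\in E_v$. Your argument only yields an existence statement: granting the decomposition $\mathrm{Def}(\tX^{new}_{top})\cong\mathrm{Def}(\tX_{top})\times E_v$, the good set $U^{new}$ is open and dense, and ``$\beta(U^{new})$ meets the generic locus'' gives a dense family of structures $\tX'$ whose fibre $\{\tX'\}\times E_v$ meets $U^{new}$ densely; it does not show that the \emph{given} generic $\tX$ is among them. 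Indeed, after your Leray reduction the relevant invariant is $\tX'\mapsto\min_{p\in E_v}h^1\bigl(Z,\calO_{\tX_{top}}(l'_{top})|_Z\otimes\m_p^{k}\bigr)$, a new semicontinuous function on $\mathrm{Def}(\tX_{top})$ whose jump locus is not a priori contained in the list of discriminants defining genericity of $\tX\subset\tX_{top}$ (those involve only the untwisted restricted natural bundles). Semicontinuity gives only that the value at $(\tX,p)$ for generic $p$ is $\geq$ the global generic value; the needed equality for every $\tX$ off the old discriminants is exactly the content of the lemma and is not supplied by openness of $\beta$. To close the gap you would have to compute the fibrewise generic value (e.g.\ via the formulas of Theorem \ref{th:OLD} or the Abel-map machinery) and match it with the combinatorial value attached to $\Gamma^{new}$, or else enlarge the discriminant list defining genericity to include these twisted jump loci; neither is done in your proposal.

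Secondarily, the two structural inputs you rely on are asserted rather than proved: the local product structure of $\mathrm{Def}(\tX^{new}_{top})$ via simultaneous contraction of $E^{new}$ (you yourself flag the versality matching as the main obstacle), and the Leray identification at the level of cycles, which as written is only correct for $k\geq 0$ and for $Z^{new}$ of pulled-back type; other twists by $E^{new}$ and other cycles introduce correction terms. For comparison, the paper's own justification is the reverse and much shorter route of Remark \ref{rem:GENAN} combined with the plumbing description of genericity at the end of \ref{ss:genan}: the generic structure on $\Gamma^{new}$ is a generic analytic plumbing, so contracting the $(-1)$-curve $E^{new}$ produces a generic structure on $\Gamma$ whose attaching point on $E_v$ is generic; read backwards, the blow up of a generic pair at a generic point is identified with the generic pair for $\Gamma^{new}$, with no cohomological bookkeeping at all.
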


\subsection{The new results. The structure of base points. } \label{ss:NEW}
If $\tX$ is generic and $l'\in\calS'_{an}\setminus\{0\}$ then we have
$\min_{l>0}\{\chi(l'+l)\}>\chi(l')$ (cf. Theorem \ref{th:OLD}{\it (f)}).

We say that $l'$ and $E_v$ satisfy the property $(*_v)$ if
\begin{equation*}
(*_v)\hspace{2cm} \min_{l\geq E_v, \ l\in L}\{\chi(l'+l)\}=\chi(l')+1.\hspace{3cm}
\end{equation*}
\begin{theorem}\label{th:NEW1}
Consider a resolution $\tX\to X$  with generic analytic structure  as in \ref{ss:genan} and fix $l'\in \calS' _{an}\setminus \{0\}$ and write $\calL:=\calO_{\tX}(-l')$.
Then the following facts hold.

(1) If $p$ is a base point of $\calL$ then $p$ is a regular point of $E$.

(2) All the base points of $\calL$ are of $A$--type.

(3) If $p\in E_v$ is a base point of $\calL$  then $l'$ and $E_v$ satisfy the property $(*_v)$.
%$\mathfrak{h}(Z_{max}+E_v)=\mathfrak{h}(Z_{max})+1$.

(4) If $(l',E_v)<0$ and $l'$ and $E_v$ satisfy $(*_v)$ then $\calL$
%$\mathfrak{h}(Z_{max}+E_v)=\mathfrak{h}(Z_{max})+1$  then
has  exactly $-(l',E_v)$ base points on $E_v$. % (all simple).

(5) Under the assumptions of (4), any base point on $E_v$ is uniformly of $A_{t(v)}$--type, where
$t(v)=m_v^+-m_v$, $m_v$ is the $E_v$--coefficient of $l'$ and
\begin{equation*}
m_v^+=\max\{\,E_v\mbox{--coefficient of } l'+l \, :\, \mbox{where} \ l\geq E_v, \ l\in L, \ \chi(l'+l)=\chi(l')+1\}.
\end{equation*}
%
%In the situation of (4) let $s'$ be the unique minimal element of
%$\calS_{an,[l']}$ with $s'\geq l'+E_v$ (cf. \ref{bek:ansemgr}).
%Let $m_v$ (resp. $m_v^+$) be the multiplicity of $l'$ (resp. of $s'$)
%along $E_v$ (in the basis $\{E_u\}_u$).
%Then $t(p)=m_v^+-m_v$ for any base point $p\in E_v$.
%(For a reinterpretation of $m^+_v$ see \ref{rem:M+}.)
%
%
%(6) In particular, cf. (\ref{eq:multGEN}),
%\begin{equation}\label{eq:multGEN2}
%{\rm mult} (X,o)=-Z_{max}^2 -\sum_v (Z_{max},E_v)\cdot (m_v^+-m_v),
%\end{equation}
%where the sum is over all $v\in \calv$ with $(Z_{max},E_v)<0$ and
%$\mathfrak{h}(Z_{max}+E_v)=\mathfrak{h}(Z_{max})+1$. \\
%
%Since all the involved invariants, $Z_{max}$, $\calS_{an}$, $\{\mathfrak{h}(l)\}_{l\in L_{\geq 0}}$, are
%computable from the dual graph $\Gamma$ of $\tX$ (cf. Theorem \ref{th:OLD}), (\ref{eq:multGEN2}) is a %topological/combinatorial expression for ${\rm mult}(X,o)$.
\end{theorem}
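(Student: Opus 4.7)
I center the argument on the image subspaces
\[
V_v \;:=\; \mathrm{im}\bigl(H^0(\tX,\calL) \to H^0(E_v,\calL|_{E_v})\bigr),
\]
whose common zeros on $E_v \cong \bP^1$ are precisely the base points of $\calL$ lying on $E_v$. Combining the long exact sequence of $0 \to \calL(-E_v) \to \calL \to \calL|_{E_v} \to 0$ with Theorem~\ref{th:OLD}, using $h^1(\calL) = 0$ (from Theorem~\ref{th:OLD}(f), since $l' \in \calS'_{an}\setminus\{0\}$) and, for $(l',E_v)<0$, also $h^1(\calL|_{E_v}) = 0$, one gets $\dim V_v = -(l',E_v) + 1 - h^1(\calL(-E_v))$. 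The adjunction identity $\chi(l'+E_v) = \chi(l') - (l',E_v) + 1$ (an immediate consequence of $(Z_K,E_v) = E_v^2 + 2$) together with Theorem~\ref{th:OLD}(d) applied to $l'+E_v > 0$ then yields
\[
h^1(\calL(-E_v)) \;=\; \chi(l') + 1 - (l',E_v) - \min_{l \geq E_v,\, l \in L} \chi(l'+l) \;\leq\; -(l',E_v),
\]
with equality precisely under $(*_v)$. In particular, $(*_v) \iff \dim V_v = 1$.

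\textbf{Parts (3) and (4).} Part~(4) is immediate: $(*_v)$ forces $\dim V_v = 1$, so $V_v = \bC\cdot s_0$, and the zero divisor of $s_0$ on $E_v$ has degree $-(l',E_v)$; genericity of $\tX$ makes these zeros simple and distinct, giving exactly $-(l',E_v)$ base points. For part~(3) I argue the contrapositive: if $\dim V_v \geq 2$, then genericity of $\tX$ (\ref{ss:genan}, via Laufer's parameter space \cite{LaDef1}) translates into $V_v$ being a generic subspace of its dimension inside $H^0(E_v,\calL|_{E_v})$; such a linear system of dimension $\geq 2$ on $\bP^1$ is base-point-free, contradicting the existence of a base point on $E_v$.

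\textbf{Parts (1), (2), (5).} For (1), a base point at a node $p = E_v \cap E_w$ would lie in the common zero locus of both $V_v$ and $V_w$, forcing $h^1(\calL(-E_v)), h^1(\calL(-E_w)) \geq 1$; a joint analysis via the cycle $Z = E_v + E_w$, the associated Mayer--Vietoris-type exact sequence at the node, and Theorem~\ref{th:OLD}(d) applied to $l'+E_v$, $l'+E_w$, $l'+E_v+E_w$ yields a cohomological incompatibility with the generic formulas. Part~(2) follows from (3)--(4) by local analysis: the generator $s_0 \in V_v$ has a simple zero at $p$, so $\cali_{\calL,p}$ contains a function of order~$1$ transverse to $E_v$ together with the local equation of $E_v$, giving $\cali_{\calL,p} \not\subset \m_{\tX,p}^2$, hence $A$-type. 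For~(5), iterate blow-ups $b:\tX^{new} \to \tX$ at $p$: Lemma~\ref{lem:BUnatline} identifies $b^*\calL(-E^{new})$ as a restricted natural line bundle on $\tX^{new}$ with Chern class $-b^*l' - E^{new}$, and Lemma~\ref{lem:BASE}(b)--(c) records the cohomological jump. Working within the pair framework $\tX \subset \tX_{top}$ of~\ref{bek:restrnlb}, so that naturality and relative genericity are preserved by simultaneous blow-ups of the top model, an induction on the number of blow-ups shows that the analogue of $(*)$ at the new exceptional $E^{new}$ fails precisely after $m_v^+ - m_v$ iterations, identifying $t(v)$ with the $A$-type depth.

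\textbf{Main obstacle.} The most delicate step is (5): each blow-up is at a base (hence special) point, so $\tX^{new}$ is not generic on its new dual graph and Theorem~\ref{th:OLD} cannot be reapplied directly. The restricted-natural-line-bundle machinery of~\ref{bek:restrnlb} is tailored to bypass this: pull-backs of natural bundles remain natural (Lemma~\ref{lem:BUnatline}), the cohomology of the pulled-back bundles is still governed by the original combinatorics on $\Gamma$, and the inductive computation of $t(v)$ reduces to tracking the $E_v$-coefficient of optimal cycles under pullback. Part~(1) is likewise subtle, as the cohomological contradiction at a node requires simultaneously balancing the $(*)$-type constraints along both incident components $E_v$ and $E_w$.
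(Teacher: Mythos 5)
Your reduction of everything to $V_v:=\mathrm{im}\bigl(H^0(\tX,\calL)\to H^0(E_v,\calL|_{E_v})\bigr)$ and the identity $\dim V_v=\min_{l\geq E_v}\chi(l'+l)-\chi(l')$ is correct (it is Remark \ref{rem:*}(a) of the paper), but at every point where geometry is needed you invoke ``genericity of $\tX$'' as if it directly produced generic sections, and this is precisely where the proofs of the paper live. The generic analytic structure is defined only by avoiding $h^1$--jump loci of restricted natural line bundles in Laufer's deformation space; it does not by itself say that the divisor of a generic section of the \emph{natural} bundle $\calO_{\tX}(-l')$ is smooth and transversal, nor that $V_v$ sits generically in $H^0(E_v,\calL|_{E_v})$. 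Your part (4) (``zeros simple and distinct''), your part (2), and your part (3) all rest on exactly these assertions, which are parts {\it (2')} and {\it (3')} of Theorem \ref{th:NEW2} and require the induction on $p_g$ with relative Abel maps, the explicit re-gluing deformations of \ref{bek:5.1.def}, Lemma \ref{lem:resNat} (keeping $\calL_b^-$ fixed while $\calL_B^-$ sweeps an open set of $r^{-1}(\calL_b^-)$), and the incidence/tangent-map arguments of Sections \ref{s:Proof2}--\ref{s:Proof3}. Note also that your slogan ``a linear system of dimension $\geq 2$ on $\bP^1$ is base-point-free'' is false for an arbitrary subspace (e.g.\ $\langle x^d,x^{d-1}y\rangle$), so the genericity of $V_v$ is not a convenience but the whole point. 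Your part (1) has a similar gap: the two inequalities $h^1(\calL(-E_v))\geq 1$ and $h^1(\calL(-E_w))\geq 1$ are not mutually inconsistent (adjacent $v,w$ may both satisfy $(*)$, each with base points away from the node), so the promised ``cohomological incompatibility'' needs an argument that actually uses that the common vanishing is at the node; the paper's proof instead blows up the node and checks the combinatorial no-fixed-component criterion (\ref{eq:semigr})--(\ref{eq:b}) on the new graph.

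The most serious gap is part (5). Your blow-up induction cannot be run as described: after blowing up a \emph{base} point the resolution is no longer generic with respect to its new graph (Remark \ref{rem:GENAN}), so neither Theorem \ref{th:OLD} nor Theorem \ref{th:NEW2} applies there, and the restricted-natural-line-bundle formalism of \ref{bek:restrnlb} preserves naturality under pullback but does not restore genericity or give any $\chi$-type formula for $h^1(b^*\calL(-kE^{new}))$ on the new graph. The assertion that ``$(*)$ fails precisely after $m_v^+-m_v$ iterations'' is exactly the statement to be proved, and no mechanism is offered for it. The paper proves {\it (5')} by a completely different route (Sections \ref{s:Prel5}--\ref{s:Proof5}): base points are characterized by a differential form $\omega_p$ with vanishing Leray residues (Proposition \ref{prop:BASE}), the motion of a base point under explicit deformations of the analytic structure is computed via Laufer integration (Lemma \ref{lem:derivative}), and the key statement is Proposition \ref{prop:GENERAL}, that two restricted natural bundles with suitably different Chern classes cannot share a base point; this is applied to $\calL$ and $\calL(-l)$ where $l'+l=s'$ is the minimal element of $\calS'_{an,[l']}$ with $s'\geq l'+E_v$, and only then does the local ideal $(x^{t(v)},y)$ with $t(v)=m_v^+-m_v$ follow. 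As it stands, your proposal establishes (3)--(4) only modulo (2'), and leaves (1), (2), (5) unproved.
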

%(Though part {\it (2)} looks superfluous, we prefer this presentation, since we will
%prove the statement in this order using different geometrical arguments at each step, cf. with the steps of
%Theorem \ref{th:NEW2}.
%Only at the very end, at step {\it (5)}, using the geometry of $A_t$--types and all previous parts  we will be able to
%prove that in fact $t=1$.)
\begin{corollary}\label{cor:NEW1} Assume that $\tX$ is generic and let $Z_{max}$ be its
maximal ideal cycle. Theorem \ref{th:NEW1} applied for $l'=Z_{max}$ and (\ref{eq:multGEN})
% (with $\chi(Z_{max})=\min\chi$, cf. Theorem \ref{th:OLD}(g))
imply:
\begin{equation}\label{eq:multGEN2}
{\rm mult} (X,o)=-Z_{max}^2 -\sum_v t(v)\cdot (Z_{max},E_v),
\end{equation}
where the sum is over all $v\in \calv$ with $(Z_{max},E_v)<0$ and
%$\mathfrak{h}(Z_{max}+E_v)=\mathfrak{h}(Z_{max})+1$. \\
$\min_{l\geq E_v}\chi(Z_{max}+l)=\chi(Z_{max})+1$.

Since all the involved invariants (in the case $\tX$ generic)  are
computable from the dual graph $\Gamma$ of $\tX$ (cf. Theorem \ref{th:OLD}), (\ref{eq:multGEN2}) is a topological/combinatorial expression for ${\rm mult}(X,o)$.
\end{corollary}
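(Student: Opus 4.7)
The plan is to apply Theorem \ref{th:NEW1} to the specific Chern class $l' = Z_{max}$ and then plug the resulting base-point data into Wagreich's formula (\ref{eq:multGEN}). The verification that Theorem \ref{th:NEW1} is applicable is immediate: by definition of the maximal ideal cycle (cf. \ref{bek:HSF}), $Z_{max}$ lies in $\calS_{an} \subset \calS'_{an}$, and it is nonzero as soon as $(X,o)$ is non--rational (the rational case being trivial since then $\mathfrak{m}_{X,o}$ has no base points by Artin's theorem).

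With $l' = Z_{max}$, part (2) of Theorem \ref{th:NEW1} says that every base point of $\calO_{\tX}(-Z_{max})$ is of $A$--type, so (\ref{eq:multGEN}) applies and gives
\[
{\rm mult}(X,o) = -Z_{max}^2 + \sum_{p} t(p),
\]
where the sum runs over all base points $p$. Parts (3)--(5) of Theorem \ref{th:NEW1} describe these base points componentwise: a vertex $v$ contributes base points if and only if $(Z_{max}, E_v) < 0$ and the property $(*_v)$ holds (for $l' = Z_{max}$); in that case there are exactly $-(Z_{max}, E_v)$ base points on $E_v$, and each of them has the same type $t(p) = t(v)$, where $t(v) = m_v^+ - m_v$ with $m_v^+$ as defined in Theorem \ref{th:NEW1}(5). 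So I would simply regroup the sum over base points as a sum over vertices:
\[
\sum_{p} t(p) \;=\; \sum_{v} \bigl(-(Z_{max}, E_v)\bigr)\cdot t(v),
\]
where the sum on the right is restricted to those $v$ with $(Z_{max}, E_v) < 0$ and $\min_{l \geq E_v}\chi(Z_{max} + l) = \chi(Z_{max}) + 1$. Substituting into Wagreich's formula yields (\ref{eq:multGEN2}).

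For the topological/combinatorial nature of the formula, I would note that by Theorem \ref{th:OLD}(g), $Z_{max}$ itself is the unique maximal element of $\calm = \{Z \in L_{>0} : \chi(Z) = \min_{l \in L}\chi(l)\}$, a set defined purely from the intersection lattice via $\chi$. The quantities $(Z_{max}, E_v)$, $m_v$, and $m_v^+$ are then combinatorial data read off from $\Gamma$ by minimizing $\chi(Z_{max} + l)$ over $l \in L$ with $l \geq E_v$. Hence every term on the right-hand side of (\ref{eq:multGEN2}) depends only on $\Gamma$.

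Since this corollary is really just a bookkeeping consequence of Theorem \ref{th:NEW1} and (\ref{eq:multGEN}), I do not expect any genuine obstacle here; the main conceptual content (the structure of the base points) has already been absorbed into Theorem \ref{th:NEW1}. The only minor care needed is to treat the rational case separately (where the multiplicity formula reduces to $-Z_{max}^2$ because no vertex satisfies $(*_v)$ with $\chi(Z_{max})+1$ actually attainable, in agreement with Artin's result) and to confirm that the range of summation in Theorem \ref{th:NEW1}(4) coincides with the range specified in the corollary, which it does by construction.
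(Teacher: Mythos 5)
Your proposal is correct and follows exactly the route the paper intends: the corollary is stated there as an immediate consequence of Theorem \ref{th:NEW1} (applied to $l'=Z_{max}\in\calS_{an}\setminus\{0\}$) together with Wagreich's formula (\ref{eq:multGEN}), with the sum over base points regrouped vertexwise using parts (3)--(5), and the topological nature of the right-hand side coming from Theorem \ref{th:OLD}. Your extra remarks on the rational case and on the applicability of the theorem are consistent with \ref{ss:proofrat} and \ref{bek:HSF}, so nothing is missing.
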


\begin{remark}\label{rem:*} (a)
The long cohomological exact sequence associated with $0\to \calO_{\tX}(-l'-E_v)\to \calO_{\tX}(-l')\to \calO_{E_v}(-l')\to 0$ and Theorem \ref{th:OLD}{\it (d)-(f)}  show that for $\tX$ generic and
 $l'\in\calS'_{an}\setminus \{0\}$
 $$\mbox{if}\
 V_v:=\frac{H^0(\tX, \calO_{\tX}(-l'))}{H^0(\tX, \calO_{\tX}(-l'-E_v))}\ \ \ \mbox{then} \
 \ \dim(V_v)= \min_{l\geq E_v}\{\chi(l'+l)\}-\chi(l').$$
 In general $\dim(V_v)\geq 1$. One the other hand,  $(*_v)$ reads as $\dim(V_v)=1$.

Equivalently, $\dim(V_v)=1$  means that
 $\dim\, {\rm im} \big(H^0(\tX, \calO_{\tX}(-l'))\to H^0(E_v, \calO_{\tX}(-l'))\big)=1$. If this happens, and $(l',E_v)<0$,
 then  (even for not necessarily generic $\tX$)
  the line bundle necessarily has base points at  the intersection points of the divisor
   of the generic section with $E_v$.
   Parts {\it (4)--(5)} of Theorem \ref{th:NEW1}
   say that these base points share  uniformly the same type of ideal.
 The geometric meaning of  part {\it (3)} is that if $\dim(V_v)\geq 2$ then there exist
 two generic sections without common zeroes along $E_v$.

 (b) If we blow up a {\it generic} point of $E$ in the generic $\tX$, then $\tX^{new}$ is also generic (cf. \ref{rem:GENAN}), and furthermore, the base points and their structures at level $\tX$ and
 $\tX^{new}$ can be identified. Hence, for ${\rm mult}(X,o)$ the very same type of formula holds with the very same
 correction term given by  the base points. In particular, for any resolution graph $\Gamma'$
 (say,  obtained from $\Gamma$ by several blow ups), the associated generic analytic resolution $\tX'$ will have the very same type of base points. Hence, the structure of base points is independent of the choice of the {\it generic} resolution. (However, if we blow up a base point, then we might eliminate the base points, but
 on those resolutions the formulae valid for generic resolutions do not work,
 and we lose the topological control  as well.)
\end{remark}

\bekezdes\label{bek:TOWER}
Theorem \ref{th:NEW1} is a consequence of the more general {\bf Technical Theorem} \ref{th:NEW2} below,
which is formulated in such a way that that a certain induction runs properly.
More precisely, it is stated for pairs $\tX\subset \tX_{top} $ with generic analytic structure
and  the bundles  are the `restricted natural line bundles' from the level of $\tX_{top}$.

Before we state the new version we note that Theorem \ref{th:OLD} was also
 proved  in \cite{NNII} for  the more general relative version, that is, the line bundles
$\calO_{\tX}(-l')$ from Theorem \ref{th:OLD} can be replaced by
 `restricted natural line bundles'
associated with some generic pair $\tX\subset \tX_{top}$,
 under some negativity assumption regarding $l'_{top}$. %the Chern classes.
In this version part {\it (f)} of Theorem \ref{th:OLD}  reads as follows.

Assume that $\tX\subset\tX_{top}$ is a generic pair, and fix $l'_{top}\in L'(\tX_{top})$.
We will assume that its $E_v$--coordinates satisfies $l'_{top,v}>0$ for all $v\in \calv$.
Let $-l':=R(-l'_{top})=c_1(\calO_{\tX_{top}}(-l'_{top})|_{\tX}) \in L'(\tX)$ be its  cohomological
restriction, and assume that $l'\in\calS'\setminus\{0\}$ (compare also with Theorem
\ref{th:B}).
%, which is basically the main statement which implies all other facts of  \cite{NNII}).
Then,  the fact that $\calO_{\tX_{top}}(-l'_{top})|_{\tX}\in {\rm Pic}^{-l'}(\tX)$ has no fixed components
can be characterised topological, it depends only on the Chern class $l'$
and it happens exactly when the generic element of ${\rm Pic}^{-l'}(\tX)$ has no fixed component.
The topological characterization is (like  for the genuine natural line bundles):
 $\chi(l')<\chi(l'+l)$ for any $l\in L_{>0}$.
In particular, the fact that $\calO_{\tX_{top}}(-l'_{top})|_{\tX}\in {\rm Pic}(\tX)$ has no fixed components
is independent of the top level $\tX_{top}$, and it depends only on the cohomological restriction $l'$ .
%\marginpar{erre nem tudom pont hogyan kellene hivatkozni}

In the next statement
$\Gamma$, $E$, $\calv$, etc. denote the invariants at level $\tX$.

\begin{theorem}\label{th:NEW2}
Consider a generic analytic pair  $\tX\subset \tX_{top}$.
Choose  $l'_{top}\in L'(\Gamma_{top})$  such that its $E_v$--coordinate $l'_{top,v}>0$ for any $v\in \calv$.
Let $l':=R(l'_{top})\in L'(\Gamma)$ be its cohomological restriction, and we assume that
$l'\in \calS' _{an}(\tX)\setminus \{0\}$.  Write $\calL:=\calO_{\tX}(-l'_{top})$ for the restricted natural line bundle
$\calO_{\tX_{top}}(-l'_{top})|_{\tX}$ as above.
% (where $\calO_{\tX_{top}}(-l'_{top})$ is the natural line bundle on $\tX_{top}$).
Then the following facts hold.

\vspace{2mm}

(1') If $p$ is a base point of $\calL$ then $p$ is a regular point of $E$.

(2')  $\calL$ has a global section whose divisor is smooth and intersects $E$ transversally
(along the  regular part of $E$).

(3') If for a certain $v\in\calv$ one has  $(l', E_v)<0$ and $\min_{l\geq E_v}\{\chi(l'+l)\}-\chi(l')\geq 2$ then
 $\calL$ admits   two generic sections without common zeroes along $E_v$.

(4') If $(l',E_v)<0$ and $l'$ and $E_v$ satisfy $(*_v)$ then $\calL$
%$\mathfrak{h}(Z_{max}+E_v)=\mathfrak{h}(Z_{max})+1$  then
has  exactly $-(l',E_v)$ base points on $E_v$.

(5')
  In the situation of (4') assume additionally that $l'_{top,v}\geq 0$ for any
  $v\in\calv_{top}$. Let $s'$ be the unique minimal element of
$\calS_{an,[l']}$ with $s'\geq l'+E_v$. Write $s'$ as $l'+l$.
 Then the generic sections
of $\calL$ and $\calL(-l)$ have no common zeroes along  $E_v$.

Furthermore, in numerical terms,
if  $m_v$ (resp. $m_v^+$) denote the multiplicity of $l'$ (resp. of $s'$)
along $E_v$, then $t(p)=m_v^+-m_v$ for any base point $p\in E_v$.

(For further discussion regarding $s'$ and   $m^+_v$ see Remark \ref{rem:M+}.)
\end{theorem}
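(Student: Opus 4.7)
\medskip

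\noindent\textbf{Proof plan for Theorem \ref{th:NEW2}.} The plan is to establish the five statements in order, each feeding into the next, with the relative version of Theorem~\ref{th:OLD} (applied to restricted natural line bundles on the towers $\tX_1\subset \tX\subset \tX_{top}$) as the central numerical engine, together with Lemma~\ref{lem:BASE} (which converts the presence of a base point into a precise jump in $h^1$) and Lemma~\ref{lem:BUnatline} (which keeps us inside the restricted natural family under blow ups). The overall strategy is to blow up suspected base points one at a time, compute $h^1$ at both levels of the blow up via the $\chi$--formula of Theorem~\ref{th:OLD}(d), and compare; any discrepancy between what Lemma~\ref{lem:BASE}(c) predicts and what the $\chi$--minimum allows forces either the nonexistence, the regularity, or the precise arithmetic of the base points.

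For part (1'), suppose $p\in E_u\cap E_v$ were a base point at a node. Blow it up: $b:\tX^{new}\to \tX$ with exceptional $E_0$, and $b^*\calL(-E_0)$ is still a restricted natural line bundle (Lemma \ref{lem:BUnatline}) with Chern class $-b^*l'_{top}-E_0$, whose cohomological restriction $-\tir'$ to $\tX^{new}$ satisfies $\tir'=b^*l'+E_0$. Lemma~\ref{lem:BASE}(c) gives $h^1(b^*\calL(-E_0))=h^1(\calL)+1$. Writing both $h^1$'s through Theorem~\ref{th:OLD}(d) at the two levels and using that $\chi(b^*l'+E_0+\tilde l)=\chi(l'+l)+\chi(E_0+r E_0)$ for the appropriate decomposition, one checks that the minimum on the blown--up graph cannot decrease by more than what is compatible with the node, contradicting the required $+1$ jump. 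The same argument, with no extra work, rules out base points at the intersection of $E_v$ with the strict transform of $E_{top}\setminus E$, thus confirming (1').

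For (2'), I would invoke the Abel--map description reviewed in Section~\ref{s:ECD}: the image of the Abel map into $\eca(\tX)$ is (generically) parametrized by smooth reduced divisors transversal to $E$ at regular points; in the generic pair $\tX\subset\tX_{top}$ the associated jump loci are avoided, so a generic section of $\calL$ has exactly this form. For (3'), the hypothesis $\min_{l\geq E_v}\chi(l'+l)-\chi(l')\geq 2$ combined with Remark~\ref{rem:*}(a) gives $\dim V_v\geq 2$, so the image of $H^0(\tX,\calL)\to H^0(E_v,\calL|_{E_v})$ inside $H^0(\bP^1,\calO(-(l',E_v)))$ is at least two--dimensional. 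Its base locus on $E_v\cong \bP^1$ is finite, and two generic members of a pencil without a fixed component on $\bP^1$ share no zero; pulling this back to $\tX$ gives (3').

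For (4'), under $(*_v)$ the image in $H^0(E_v,\calL|_{E_v})=H^0(\bP^1,\calO(-(l',E_v)))$ is exactly one--dimensional; the unique (up to scalar) section vanishes in exactly $-(l',E_v)$ points of $E_v$ counted with multiplicity, and by (2') these zeros are simple and lie in the regular part of $E$, so they account for precisely $-(l',E_v)$ base points. Finally, for (5'), I would run an induction on blow ups: pick a base point $p\in E_v$, blow it up to obtain $b:\tX^{new}\to\tX$ with $E_0=b^{-1}(p)$; the pulled back restricted natural line bundle is $\calO_{\tX^{new}}(-b^*l'_{top}-kE_0)$ for $k=1$, and Lemmas~\ref{lem:BASE} and \ref{lem:BUnatline} together with Lemma~\ref{lem:BLOWUP} (the point $p$ is generic on $E_v$ after step (2')) keep us inside the generic restricted framework. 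The cycle $l'+l$ realizing $s'$ tracks, after each successive blow up at the descendant of $p$, the $E_v$--coordinate increasing by one; the base point persists as long as this coordinate is $<m_v^+$ and is resolved exactly when equality is reached, giving $t(p)=m_v^+-m_v$, uniformly in $p\in E_v$. The main obstacle I expect is precisely this last step: showing that $m_v^+$ (and not some smaller integer) is the correct stopping time, and that the uniformity across all base points on $E_v$ is forced by the semigroup minimality of $s'$ and the fact that successive blow ups at generic points preserve the relative generic hypothesis, so that Theorem~\ref{th:OLD}(f) applies at every intermediate level.
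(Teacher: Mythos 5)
Your reductions for {\it (1')} and {\it (4')} are essentially sound ({\it (1')} by blow up at the node plus the numerical criterion, much as in \ref{ss:proofpart1}, and {\it (4')} from $(*_v)$ plus {\it (2')}), but the proposal has genuine gaps exactly where the real difficulty lies, namely in {\it (2')}, {\it (3')} and {\it (5')}. For {\it (2')}, invoking ``the jump loci are avoided'' does not suffice: the generic analytic structure is defined by avoiding jump loci of the \emph{first cohomology} of restricted natural line bundles, which controls $h^0$ and $h^1$ but says nothing directly about the geometry (smoothness, transversality) of the divisor of a section of the \emph{specific} bundle $\calL=\calO_{\tX}(-l'_{top})$. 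A generic point of $\eca^{-l'}(Z)$ is indeed a smooth transversal divisor, but you need such a divisor inside the particular fiber $c^{-1}(\calL)$, and that is precisely what the paper spends Section \ref{s:Proof2} proving, by induction on $p_g$, using Laufer duality to produce forms with poles, repeated blow ups at \emph{generic} points, relative Abel maps and their dominance, and a deformation (regluing) of the analytic structure that moves $\calL_B^-$ into the good open set $U_{\pic}$. For {\it (3')}, your step ``two generic members of a pencil without a fixed component on $\bP^1$ share no zero'' assumes exactly what must be proved: knowing $\dim V_v\geq 2$ does not prevent the image of $H^0(\tX,\calL)\to H^0(E_v,\calL|_{E_v})$ from having a fixed divisor on $E_v$ (e.g.\ the pencil spanned by $x^d$ and $x^{d-1}y$), and ruling this out for the natural bundle again requires the genericity of the analytic structure in an essential, geometric way (the incidence-variety and deformation arguments of Section \ref{s:Proof3}, or, for generic line bundles of arbitrary singularities, the monodromy-irreducibility argument of Theorem \ref{th:GENGEN}).

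The most serious problem is {\it (5')}. Your induction blows up the base point $p$ and claims, via Lemma \ref{lem:BLOWUP}, that the resulting pair is still generic because ``$p$ is generic on $E_v$ after step {\it (2')}''. This is false: a base point is a \emph{special} point of $E_v$, determined by the bundle, and the paper explicitly warns (Remark \ref{rem:GENAN} and Remark \ref{rem:*}(b)) that blowing up a base point produces a resolution that is \emph{not} generic with respect to the new graph, so Theorem \ref{th:OLD} and the $(*_v)$-type numerics are no longer available at the intermediate levels; hence your claim that the base point ``persists as long as the $E_v$--coordinate is $<m_v^+$ and is resolved exactly at equality'' has no justification and is in fact the whole content of the statement. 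The paper's route is entirely different: {\it (5')} is deduced from Proposition \ref{prop:GENERAL}, which asserts that two restricted natural line bundles (here with Chern classes governed by $l'$ and $s'=l'+l$) cannot share a base point; its proof uses the characterization of base points by differential forms (Proposition \ref{prop:BASE}), Laufer duality and Leray residues, the explicit computation of how the base point moves under a deformation of the analytic structure (Lemma \ref{lem:derivative}), the resulting proportionality constraints on the Chern coefficients, the rational-line-bundle reduction (Lemma \ref{rat}), and a final counting contradiction. None of these ingredients appear in your sketch, and the mechanism you propose in their place rests on the incorrect genericity claim, so the argument as written would fail.
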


\begin{remark}\label{rem:M+}  Fix a resolution $\tX\to X$ with generic analytic structure.

(a) For any $n\in \Z$ and $h\in H$ assume that
$L'_{n,h}:=\{l'\in L'\,:\, [l']=h, \ \chi(l')=n\}$ is non--empty. Let $M$ be a maximal element of it,
and assume that there exists no $l\in L_{>0}$ such that $\chi(M+l)<\chi(M)$.
Then $M\in\calS'_{an}$.
Indeed, if  $M\not\in\calS'_{an}$, then by Theorem \ref{th:OLD}{\it (f)} there exists $l\in L_{>0}$
with $\chi(M+l)=\chi(M)$. This contradicts the maximality of $M$ in $L'_{n,h}$.

(b) Note that the assumptions of Theorem \ref{th:NEW1}{\it (5)} (namely,  $(l',E_v)<0$ and
$l'$ and $E_v$ satisfy $(*_v)$) imply   that  $L_{l',v}:=\{l'+l\,:\,
l\geq E_v, \ l\in L, \ \chi(l'+l)=\chi(l')+1\}$ is non--empty.

We claim that $L_{l',v}$ has a unique maximal element, which is exactly $s'$ from {\it (5')} (namely,
the minimal element of $\calS'_{an,[l']}$  with $s'\geq l'+E_v$). Indeed, let $\sigma'$ be a maximal element of
$L_{l',v}$.
Since $l'\in\calS'_{an}$, $\chi(l'+l)>\chi(l')$ for any $l\in L_{>0}$, hence $\chi(l'+l)\geq \chi(\sigma')$.
By part (a) $\sigma'\in \calS'_{an,[l']}$. By the minimality of $s'$ we have $s'\leq \sigma'$.
Assume that $\sigma'-s'=l>0$. Then
$\chi(l')<\chi(s')< \chi(s'+l)=\chi(\sigma')=\chi(l')+1$, a contradiction. ($l'\in \calS'_{an}$
and Theorem \ref{th:OLD}{\it (f)} imply
the  first inequality, and similarly, $s'\in \calS'_{an}$ the second one.) Hence $\sigma'=s'$. This is true for any choice of $\sigma'$, hence $L_{l',v}$ has a unique maximal element, namely, $s'$.
In particular,
in {\it (5')}  $m_v^+$ equals (compare with $(*_v)$)
\begin{equation}\label{eq:M+}
m_v^+=\max\{\,E_v\mbox{--coefficient of } l'+l \, :\, \mbox{where} \ l\geq E_v, \ l\in L, \ \chi(l'+l)=\chi(l')+1\}.
\end{equation}
%The numerical part of {\it(5')} says that this $m^+_v$ is $m_v+1$. In other words, in local coordinates as in
%Definition \ref{def:BASE}, in a small neighbourhood of $p$ there is a section of type
%$x^{m_v}y$ (the generic section of $\calL$) and another of type $x^{m_v^+}$ (the generic section of $\calL(l)$), and any %other section is in the ideal of $x^{m_v^+}$.  Hence $\cali_{\calL,p}=(x,y)=\m_{\tX,p}$.
(c) Not that in the context of Theorem \ref{th:NEW1}, when $\calv=\calv_{top}$,
the assumptions of Theorem \ref{th:NEW2} regarding $l'_{top}$ and $l'$ are satisfied.
Indeed, if $l'\in \calS'\setminus \{0\}$ then $l'_v>0$ for every $v$.
\end{remark}

\bekezdes The proof of Theorem \ref{th:NEW2} runs over several section. At the end of this section
we prove part {\it (1')} and all the statements for $\tX$ rational (as a starting point  of an induction).

\subsection{The proof of Theorem \ref{th:NEW2}{\it (1')}}\label{ss:proofpart1}
We will use the following fact, cf. \ref{bek:TOWER}:
\begin{equation}\label{eq:semigr}
\calO_{\tX}(-l'_{top}) \ \mbox{has no fixed components} \ \Leftrightarrow \ \chi(l')<\chi(l'+l) \ \mbox{for any $l\in L_{>0}$}.
\end{equation}
Fix a singular point  $p=E_u\cap E_v$ of $E$. Let
$b:\tX^{new}\to \tX$ be the blow up at $p$ and $E^{new}=b^{-1}(p)$.  One sees that $\tX^{new}$ is also generic
with respect to its dual graph $\Gamma^{new}$. (E.g., the starting $\tX$ can be chosen to be
obtained from a generic structure on $\Gamma^{new}$ by blowing down $E^{new}$.)
 This means that the {\it equivalence} (\ref{eq:semigr}) is  valid for both
 $\calO_{\tX}(-l'_{top})$ and $\calO_{\tX^{new}}(- b_{top}^*(l'_{top}))$ (cf. \ref{bek:restrnlb}).

By assumption $l'\in \calS'_{an}$.  Hence,  by the comments from \ref{bek:TOWER},
the left hand side of (\ref{eq:semigr}) holds for $\calO_{\tX}(-l'_{top})$ too. Thus,
by (\ref{eq:semigr}),  both sides are satisfied in the case of  $\calO_{\tX}(-l'_{top})$, $l'=R(l'_{top})$.

Using this  we show that the right  hand side of (\ref{eq:semigr}) is valid for $\calO_{\tX^{new}}(- b_{top}^*(l'_{top}))$ too.

For this we have to verify that
\begin{equation}\label{eq:b}
\chi(b^*(l'))< \chi(b^*(l')+l^{new})\ \mbox{for any $l^{new}\in L(\Gamma^{new})$,  $l^{new}>0$}.
\end{equation}
Write $l^{new}=b^*(l)+kE^{new}$ with some $l\in L$ and $k\in  \Z$. Then $\chi(b^*(l'))=\chi(l')$ and $\chi(b^*(l')+l^{new})=
\chi(l'+l)+k(k+1)/2$. If $l>0$ then $\chi(l'+l)>\chi(l')$. If $l=0$ then $l^{new}=kE^{new}$, hence $k\geq 1$
and $k(k+1)/2>0$. Hence (\ref{eq:b}) holds.

In particular, the left hand side of (\ref{eq:semigr}) should hold
for $\calO_{\tX^{new}}(- b_{top}^*(l'_{top}))$, i.e. this bundle has no fixed components.
 But then $p$ cannot be a base point of $\calO_{\tX}(-l'_{top})$, since in that case
  $E^{new}$ would be  a fixed component by Lemma \ref{lem:BASE}{\it (b)}.
%Indeed, using the isomorphism
%$H^0(\tX, \calO_{\tX}(-l'_{top}))\to H^0(\tX^{new}, \calO_{\tX^{new}}(-b^*(l'_{top})))$ (given by the pull--back of  sections)
%we see that any global  section of $\calO_{\tX^{new}}(-b^*(l'_{top}))$ vanishes  along $E^{new}$ due to the fact that
%any global section of $\calO_{\tX}(-l'_{top})$ vanishes  at $p$. This contradiction proves part (1').
%Hence $E^{new} $ is a fixed component of  $\calO_{\tX^{new}}(-b^*(l'))$.

\subsection{The proof of Theorem \ref{th:NEW2} for $\tX$ rational}\label{ss:proofrat}
From (\ref{eq:PIC}) we obtain that any line bundle with Chern class $-l'$ is isomorphic to
$\calO_{\tX}(-l'_{top})$.
 Therefore, any noncompact curve (cut) $C$ in $\tX$, which makes $l'+C$ numerically trivial
(that is, $(C+l',E_v)=0$ for any $v\in\calv$) is the divisor of a possible global section of $\calO_{\tX}(-l'_{top})$. Since the
position of such curves $C$  can be moved generically, one obtains that   $\calO_{\tX}(-l'_{top})$ has no base points at all
(see also  \cite{Artin66}).
Hence, to finish the proof, we need to verify that if ($\dag$) $(l',E_v)<0$ then ($*_v$) cannot happen.
  Indeed,  $\chi(l'+l)-\chi(l')=\chi(l)-(l',l)$. But for $l\geq E_v$ one has
   $\chi(l)\geq 1$  by Artin's criterion of rationality
  \cite{Artin62,Artin66}, and $(l',l)\leq (l',E_v)\leq -1$ since $l'\in\calS'$ and ($\dag$).

\section{Effective Cartier divisors and Abel maps}\label{s:ECD}

Some parts of the proof of Theorem \ref{th:NEW2} are based on the properties of Abel maps associated with
normal surface singularities.  In this section we review some needed material.
We follow \cite{NNI}, see also \cite[\S 3]{Kl} and \cite{Groth62}.
In the sequel  we fix a good resolution $\phi:\tX\to X$ of a normal surface singularity,
whose link is a rational homology sphere. The notations of section \ref{s:prel} will also be adopted.

Regarding notations  the next observation is appropriate. In the previous sections (and in the sequent ones also)
it was natural to use the notation
$\calO(-l)$ for bundles with  $l\in\calS$ (since these are related with the ideal sheaf of section with vanishing
order $\geq l$). Here $c_1(\calO(-l))=-l$.  On the other hand, in this section we discuss the space of Cartier divisors and Picard groups with fixed Chern classes, and here it is not natural to carry  this sign
in all expressions. So, we will use the notation
$\calO_{\tX}(l')$ for bundles with $l'\in -\calS'$. This explains some sign  differences in certain formulae.

\subsection{The Abel map} \label{ss:4.1}
Let us fix an effective integral cycle  $Z\in L$, $Z\geq E$.
%(The restriction $Z\geq E$ is imposed by the
%easement of the presentation, everything can be adopted  for $Z>0$).
Let $\eca(Z)$  be the space of effective Cartier  divisors supported on  $Z$.
Note that they have zero--dimensional supports in $E$.
Taking the class of a Cartier divisor provides  a map
$c:\eca(Z)\to \pic(Z)$, called the {\it Abel map}.
Let  $\eca^{l'}(Z)$ be the set of effective Cartier divisors with
Chern class $l'\in L'$, that is,
$\eca^{l'}(Z):=c^{-1}(\pic^{l'}(Z))$.
We consider the restriction of $c$, $c^{l'}(Z) :\eca^{l'}(Z)
\to \pic^{l'}(Z)$ too, sometimes still denoted by $c$.
%For any $Z_2\geq Z_1>0$ one has the natural  commutative diagram
%\begin{equation}\label{eq:diagr}
%\begin{picture}(200,45)(0,0)
%\put(50,37){\makebox(0,0)[l]{$
%\eca^{l'}(Z_2)\,\longrightarrow \, \pic^{l'}(Z_2)$}}
%\put(50,8){\makebox(0,0)[l]{$
%\eca^{l'}(Z_1)\,\longrightarrow \, \pic^{l'}(Z_1)$}}
%\put(70,22){\makebox(0,0){$\downarrow$}}
%\put(135,22){\makebox(0,0){$\downarrow$}}
%\end{picture}
%\end{equation}
The bundle  $\calL\in \pic^{l'}(Z)$ is in the image ${\rm im}(c)$ of the Abel map  if and only if
it has no fixed components, that is,  if and only if
$H^0(Z,\calL)_{\reg}\not=\emptyset$,  cf. (\ref{eq:H_0}).
%
%$H^0(Z,\calL)_{\reg}$  is non--empty.
%Note that $H^0(Z,\calL)$ is a module over the algebra
%$H^0(\calO_Z)$, hence one has a natural action of $H^0(\calO_Z^*)$ on
%$H^0(Z, \calL)_{\reg}$ too. This second action is algebraic and free.
% Furthermore, and
% in this case  $c^{-1}(\calL)=H^0(Z,\calL)_{\reg}/H^0(\calO_Z^*)$.

One verifies that $\eca^{l'}(Z)\not=\emptyset$ if and only if $-l'\in \calS'\setminus \{0\}$. Therefore, it is convenient to modify the definition of $\eca$ in the case $l'=0$: we (re)define $\eca^0(Z)=\{\emptyset\}$,
as the one--element set consisting of the `empty divisor'. We also take $c^0(Z)(\emptyset):=\calO_Z$. Then we have
\begin{equation}\label{eq:empty}
\eca^{l'}(Z)\not =\emptyset \ \ \Leftrightarrow \ \ l'\in -\calS'.
\end{equation}
If $l'\in -\calS'$  then
  $\eca^{l'}(Z)$ is a smooth complex irreducible quasi--projective
  variety of dimension $(l',Z)$ (see \cite[Th. 3.1.10]{NNI}). Moreover, cf.  \cite[Lemma 3.1.7]{NNI},
if $\calL\in \im (c^{l'}(Z))$
then  the fiber $c^{-1}(\calL)$
 is a smooth, irreducible quasiprojective variety of  dimension
 \begin{equation}\label{eq:dimfiber}
\dim(c^{-1}(\calL))= h^0(Z,\calL)-h^0(\calO_Z)=%\chi(Z,\calL)-\chi(Z)+h^1(Z,\calL)-h^1(\calO_Z)=
 (l',Z)+h^1(Z,\calL)-h^1(\calO_Z).
 \end{equation}
The Abel map can be defined for any effective integral cycle $Z$ (even without $Z\geq E$). However, in this note
in all our applications all the $E_v$--coefficients of $Z$ will be very large, denoted by $Z\gg 0$.
In this way $Z$ will be a `finite  model' for $\tX$. (Note that `$\eca(\tX)$' is `undefined,  infinite dimensional'.)
Additionally we will also have $h^1(Z,\calL)=h^1(\tX,\calL)$ for $\calL\in {\rm Pic}(\tX)$
by Formal Function Theorem.

\bekezdes \label{bek:I}
Consider again  a Chern class $l'\in-\calS'$ as above.
The $E^*$--support $I(l')\subset \calv$ of $l'$ is defined via the identity  $l'=\sum_{v\in I(l')}a_vE^*_v$ with all
$\{a_v\}_{v\in I}$ nonzero. Its role is the following:

Besides the Abel map $c^{l'}(Z)$ one can consider its `multiples' $\{c^{nl'}(Z)\}_{n\geq 1}$ as well.
 It turns out (cf. \cite[\S 6]{NNI}), that $n\mapsto \dim \im (c^{nl'}(Z))$
is a non-decreasing sequence, and   $\im (c^{nl'}(Z))$ is an affine subspace
for $n\gg 1$, whose dimension $e_Z(l')$ is independent of $n\gg 1$, and essentially it depends only
on $I(l')$. Moreover, by \cite[Theorem 6.1.9]{NNI},
\begin{equation}\label{eq:ezl}
e_Z(l')=h^1(\calO_Z)-h^1(\calO_{Z|_{\calv\setminus I(l')}}),
\end{equation}
where $Z|_{\calv\setminus I(l')}$ is the restriction of the cycle $Z$ to its $\{E_v\}_{v\in \calv\setminus I(l')}$
coordinates.
For $Z\gg 0$  this gives
\begin{equation}\label{eq:ezlb}
e_Z(l')=h^1(\calO_{\tX})-h^1(\calO_{\tX(\calv\setminus I(l'))}),
\end{equation}
where $\tX(\calv\setminus I(l'))$ is a convenient small neighbourhood of $\cup_{v\in \calv\setminus I(l')}E_v$.

Let $\Omega _{\tX}(I)$ be the subspace of $H^0(\tX\setminus E, \Omega^2_{\tX})/ H^0(\tX,\Omega_{\tX}^2)$ generated by differential forms which have no poles along $E_I\setminus \cup_{v\not\in I}E_v$.
Then, cf. \cite[\S8]{NNI},
\begin{equation}\label{eq:ezlc}
h^1(\calO_{\tX(\calv\setminus I)})=\dim \Omega_{\tX}(I).
\end{equation}
%i.e., the restriction $H^1(\calO_{\tX})\to H^1(\calO_{\tX(\calv\setminus I)})$
%restricted to $\Omega_{\tX}(I)$ is an isomorphism.
\bekezdes \label{bek:DomAbel} {\bf $c^{l'}(Z)$ dominant.}
Next, we characterize those cases, when the Abel map $c^{l'}(Z)$ is dominant
(the closure of its image is  $\pic^{l'}(Z)$). By \cite[Theorem 4.1.1]{NNI} one has

\begin{theorem}\label{th:dominant}
 Fix $l'\in -\calS'$,  $Z\geq E$ as above. Then
 $c^{l'}(Z)$ is dominant if and only if $\chi(-l')<\chi(-l'+l)$ for all $0<l\leq Z$, $l\in L$.
If $Z\gg0$, then this last restriction runs over $0<l$, $l\in L$.
In particular, the fact that $c^{l'}(Z)$ is dominant is independent of the analytic structure supported by $\Gamma$ and it can be characterized topologically.

Moreover,
if $c^{l'}(Z)$ is dominant then $h^1(Z,\calL_{gen})=0$ for  generic $\calL_{gen}\in\pic^{l'}(Z)$.
\end{theorem}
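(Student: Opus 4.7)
The plan is to convert dominance of $c^{l'}(Z)$ into a cohomological vanishing by dimension counting, and then match that vanishing with the numerical condition by induction on $Z$.

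\textbf{Dimension reduction.} Since $\eca^{l'}(Z)$ is irreducible of dimension $(l',Z)$ and $\pic^{l'}(Z)$ is irreducible of dimension $h^1(\calO_Z)$, comparing with the fibre formula \eqref{eq:dimfiber} yields
\[ \dim\, \im\, c^{l'}(Z) \;=\; h^1(\calO_Z) - h^1(Z,\calL_{\rm gen}), \]
where $\calL_{\rm gen}$ is a generic element of the image. Hence $c^{l'}(Z)$ is dominant if and only if the generic $\calL \in \pic^{l'}(Z)$ satisfies $h^1(Z,\calL) = 0$. This proves the ``moreover'' clause and reduces the theorem to showing that $h^1(Z,\calL_{\rm gen}) = 0$ is equivalent to the strict inequality $\chi(-l') < \chi(-l'+l)$ for every $0 < l \leq Z$.

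\textbf{Forward direction by induction on $Z$.} Choose a vertex $v$ with $Z_v > 0$ and write $Z = Z' + E_v$. The short exact sequence
\[ 0 \to \calL|_{Z'}(-E_v) \to \calL|_Z \to \calL|_{E_v} \to 0 \]
combined with $H^1(E_v, \calL|_{E_v}) = 0$ (which holds because $\calL|_{E_v}$ has degree $(l',E_v) \geq 0$ on $E_v \simeq \bP^1$, as $-l' \in \calS'$) reduces $H^1(Z,\calL)$ to a quotient of $H^1(Z', \calL|_{Z'}(-E_v))$. The restriction--twist map $\pic^{l'}(Z) \to \pic^{l'-E_v}(Z')$ is surjective (using $H^2$-vanishing for sheaves supported in dimension zero), so generic bundles on $Z$ restrict to generic bundles on $Z'$. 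The forward direction then follows by induction, provided the numerical condition descends from $(l',Z)$ to $(l'-E_v, Z')$; this descent relies on the identity $\chi(-l'+l) - \chi(-l') = (l',l) + \chi(l)$ and is secured by choosing $v$ judiciously, for instance at the boundary of the support of $Z$. The base case $Z = E$ reduces to a direct calculation on a product of Picard groups of $\bP^1$'s.

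\textbf{Backward direction and main obstacle.} Assume the numerical condition fails and take $l_0$ minimal with $0 < l_0 \leq Z$ and $\chi(-l'+l_0) \leq \chi(-l')$. The ``addition of Cartier divisors'' map
\[ \eca^{l_0}(l_0) \times \eca^{l'-l_0}(Z-l_0) \longrightarrow \eca^{l'}(Z) \]
produces, via the minimality of $l_0$ and the inductive hypothesis on both factors, a constructible subset whose image under $c^{l'}(Z)$ has dimension strictly smaller than $h^1(\calO_Z)$, obstructing dominance. The main technical obstacle is to rule out that some other stratum of $\eca^{l'}(Z)$ compensates and restores dominance. Since $\eca^{l'}(Z)$ is irreducible, this amounts to verifying that the ``extra sections'' produced by the factorization above contribute genuinely to a jump in cohomology and not merely to a translate within a single fibre of $c$. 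This is precisely the kind of analysis underlying the companion statement Theorem \ref{th:OLD}(f) characterizing the analytic semigroup $\calS'_{an}$, and one expects the same semicontinuity-and-combinatorics argument to carry through here, applied to all of $\pic^{l'}(Z)$ rather than to the specific natural bundle.
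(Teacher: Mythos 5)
Your opening ``dimension reduction'' is where the argument breaks, and the defect is not repairable by the later steps: the equivalence ``$c^{l'}(Z)$ is dominant $\Leftrightarrow$ $h^1(Z,\calL_{gen})=0$ for generic $\calL_{gen}\in\pic^{l'}(Z)$'' is false in the backward direction. Dominance means the generic bundle lies in ${\rm im}(c^{l'}(Z))$, i.e.\ that it has a section with \emph{no fixed components}; this is strictly stronger than $h^1=0$. Indeed, for the generic bundle one has $h^1(Z,\calL_{gen})=\chi(-l')-\min_{0\le l\le Z}\chi(-l'+l)$ (this is \cite[Theorem 5.3.1]{NNI}, quoted in Section \ref{s:GEN-GEN}), so $h^1(\calL_{gen})=0$ corresponds to the \emph{non-strict} inequality $\chi(-l')\le\chi(-l'+l)$, while the theorem you are proving asserts that dominance is the \emph{strict} one. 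The two differ exactly in the equality case, and that case occurs: on the elliptic graph of Example \ref{ex:TWOBASEDIFF} take $-l'=Z_{min}=E_1^*$; then $\chi(-l'+l)-\chi(-l')=\chi(l)-(Z_{min},l)\ge 0$ for all $l>0$, with equality at the minimally elliptic cycle of the subgraph obtained by deleting $E_1$, so $h^1(\calL_{gen})=0$ although $c^{-Z_{min}}(Z)$ is not dominant (consistently, $Z_{min}\notin\calS'_{an}$ for the generic structure, cf.\ Example \ref{ex:TWOBASEDIFF} and Theorem \ref{th:B}). Concretely, the computation you would need — and never make — is that $E_v$ is a fixed component of $\calL_{gen}$ precisely when $\min_{E_v\le l\le Z}\chi(-l'+l)=\chi(-l')$; this is where the strict inequality enters, and your forward induction (which only chases $h^1$-vanishing through the sequence $0\to\calL|_{Z'}(-E_v)\to\calL\to\calL|_{E_v}\to 0$) proves at best the weaker vanishing statement, not dominance.

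The backward direction is also incomplete by your own account: the claim that the addition map $\eca^{l_0}(l_0)\times\eca^{l'-l_0}(Z-l_0)\to\eca^{l'}(Z)$ forces a dimension drop is exactly the point that needs proof, and ``one expects the same semicontinuity-and-combinatorics argument to carry through'' is not an argument. (Note also that with your choice ``$\chi(-l'+l_0)\le\chi(-l')$'' you must handle the equality case, which, as above, is precisely the delicate one: there $h^1(\calL_{gen})=0$, so no cohomological jump is available and the obstruction to dominance is a fixed component.) Finally, be aware that this statement is not proved in the present paper at all — it is imported as \cite[Theorem 4.1.1]{NNI} — so there is no internal proof to imitate; but as written your proposal does not establish either implication.
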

More generally, cf. \cite[Prop. 5.6.1]{NNI},
\begin{theorem}\label{th:dominant2}
If $\calL\in {\rm im}(c^{l'}(Z))\subset \pic^{l'}(Z)$ then
$h^1(Z,\calL)\,\geq\, {\rm codim}({\rm im}(c^{l'}(Z)))$. %\,\geq\, \chi(-l')-\min _{0\leq l\leq Z} \chi(-l'+l). $
%The first inequality is equality whenever $\calL$ is generic in ${\rm im}(c^{l'}(Z))$.
\end{theorem}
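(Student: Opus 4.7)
The plan is to derive the inequality purely from the dimension formula for the fibers of $c^{l'}(Z)$ combined with the standard upper semi-continuity of fiber dimension for a morphism whose source is irreducible. The three ingredients are already packaged in the section: the domain $\eca^{l'}(Z)$ is smooth and irreducible of dimension $(l',Z)$; the target $\pic^{l'}(Z)$ is (as an affine translate of $\pic^0(Z)$) of dimension $h^1(\calO_Z)$; and the fiber dimension over any $\calL\in\im(c^{l'}(Z))$ is given by the formula (\ref{eq:dimfiber}).

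First I would recall these dimensional facts and, in particular, observe that
\[
\codim\bigl(\im(c^{l'}(Z))\bigr) \;=\; \dim\pic^{l'}(Z) - \dim\im(c^{l'}(Z)) \;=\; h^1(\calO_Z) - \dim\im(c^{l'}(Z)),
\]
where $\dim$ of the (constructible) image equals $\dim$ of its closure. Next, factor $c^{l'}(Z)$ as a dominant morphism onto $\overline{\im(c^{l'}(Z))}$. Since $\eca^{l'}(Z)$ is irreducible, the fiber dimension theorem yields, for every $\calL\in\im(c^{l'}(Z))$,
\[
\dim\, (c^{l'}(Z))^{-1}(\calL) \;\geq\; \dim\eca^{l'}(Z) - \dim\overline{\im(c^{l'}(Z))} \;=\; (l',Z) - \dim\im(c^{l'}(Z)).
\]

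Finally I would substitute (\ref{eq:dimfiber}) into the left-hand side. That equation reads
\[
\dim\,(c^{l'}(Z))^{-1}(\calL) \;=\; (l',Z) + h^1(Z,\calL) - h^1(\calO_Z),
\]
so after cancelling $(l',Z)$ the inequality becomes
\[
h^1(Z,\calL) - h^1(\calO_Z) \;\geq\; -\dim\im(c^{l'}(Z)),
\]
i.e.\ $h^1(Z,\calL) \geq h^1(\calO_Z) - \dim\im(c^{l'}(Z)) = \codim(\im(c^{l'}(Z)))$, which is exactly the claim.

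There is essentially no obstacle: the proof is a one-line consequence of (\ref{eq:dimfiber}) and the general lower bound on fiber dimension. The only mildly delicate point is handling $l'=0$ separately (where $\eca^{0}(Z)=\{\emptyset\}$ is a point, $c^{0}(Z)(\emptyset)=\calO_Z$, both sides vanish) and keeping the bookkeeping of $\dim\pic^{l'}(Z)=h^1(\calO_Z)$ straight via the exact sequence (\ref{eq:PICZ}); neither requires anything beyond what is already set up.
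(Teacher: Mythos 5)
Your argument is correct and is essentially the intended one: the paper does not reprove this statement but quotes it from \cite[Prop.~5.6.1]{NNI}, and the proof there is exactly this combination of the fiber--dimension formula (\ref{eq:dimfiber}) with the lower bound $\dim c^{-1}(\calL)\geq \dim \eca^{l'}(Z)-\dim\overline{\im(c^{l'}(Z))}$ coming from irreducibility of $\eca^{l'}(Z)$. One small correction to your aside on $l'=0$: there $\im(c^{0}(Z))=\{\calO_Z\}$, so both sides equal $h^1(\calO_Z)$ rather than vanish (the inequality holds with equality), and in fact your main computation already covers this case without any separate treatment.
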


\bekezdes \label{bek:GenAbel} {\bf The case of generic analytic structure $\tX$.}
We consider  a generic pair $\tX\subset \tX_{top}$ and the corresponding restricted
natural line bundles $\calO_{\tX}(l'_{top})\in {\rm Pic}(\tX)$, restricted from ${\rm Pic}(\tX_{top})$.
Additionally, we will take an integral cycle $Z\geq E$ (this will `replace' $\tX$ whenever $Z\gg 0$).
The corresponding restricted natural line bundles will be denoted by  $\calO_Z(l'_{top})\in {\rm Pic}(Z)$.

The main feature of the generic analytic structures is that
 a  restricted  natural line bundle  $\calO_Z(l'_{top})$  cohomologically behave like the generic line bundle
$\calL_{gen}\in \pic^{l'}(Z)$. The  precise statement is formulated as follows.
(This is Theorem 5.1.1 from \cite{NNII}; here we use the notation
$\tX\subset \tX_{top}$ for the pair $\tX(|Z|)\subset \tX$ of \cite{NNII}.)   Below, $\calv$, $\calS'$, $E$
are invariants of the dual graph of  $\tX$.

\begin{theorem}\label{th:B}\cite{NNII}
 Take $\tX\subset \tX_{top}$  generic and $Z\geq E$ as above.
%Assume that $(X,o)$ and its good resolution $(\tX,E)$ is generic and
%fix also some integer effective cycle $Z$ on it as above.
Assume that   $l'_{top}=\sum_{v\in \calv_{top}}l'_{top,v}E_v$
satisfies $l'_{top,v} <0$ for any $v\in\calv$ and   $l': = R(l'_{top}) \in -\calS'$.

(I) \  The following facts are equivalent:

(a) $\calO_Z(l'_{top})\in \im(c^{l'}(Z))$, that is,   $H^0(Z,\calO_Z(l'_{top}))_{\reg}\not=\emptyset $;

(b) $c^{l'}(Z)$ is dominant, or equivalently,  $\calL_{gen}\in \im(c^{l'}(Z))$, that is,
 $H^0(Z,\calL_{gen})_{\reg}\not=\emptyset $,
 for a generic line bundle $\calL_{gen}\in \pic^{l'}(Z)$;

 (c) $\calO_Z(l'_{top})\in \im(c^{l'}(Z))$,
 and for any $D\in (c^{l'}(Z))^{-1}(\calO_Z(l'_{top}))$ the tangent map
 $T_Dc^{l'}(Z): T_D\eca^{l'}(Z)\to T_{\calO_Z(l'_{top})}\pic^{l'}(Z)$ is surjective.

 \vspace{1mm}

 \noindent (II)  We have $h^i(Z,\calO_Z(l'_{top}))=h^i(Z,\calL_{gen})$  for $i=0,1$ and a generic line
 bundle $\calL_{gen}\in \pic^{l'}(Z)$.

\end{theorem}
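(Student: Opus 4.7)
The plan is to reduce Theorem \ref{th:B} to two ingredients: the Abel-map stratification of $\pic^{l'}(Z)$ worked out in \cite{NNI}, and a ``natural-bundle map'' from Laufer's deformation space $\calD$ of $\tX_{top}$ into $\pic^{l'}(Z)$, whose dominance will force the specific bundle $\calO_Z(l'_{top})$ to sit in the generic stratum of every relevant constructible subset. Since the link is a rational homology sphere, the splitting of the exponential sequence \eqref{eq:PICZ} is canonical and varies holomorphically in families, so one obtains a well-defined holomorphic map
\[
\Phi \,:\, \calD \;\longrightarrow\; \pic^{l'}(Z),\qquad \lambda \mapsto \calO_{Z_\lambda}(l'_{top}).
\]

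I would first dispose of the easy implications. $(c)\Rightarrow(a)$ is by definition, and for $(c)\Rightarrow(b)$, surjectivity of $T_D c^{l'}(Z)$ at some $D$ above $\calO_Z(l'_{top})$, combined with the smoothness and irreducibility of $\eca^{l'}(Z)$ of dimension $(l',Z)$ from \cite[Thm.~3.1.10]{NNI}, forces $c^{l'}(Z)$ to be submersive near $D$ and hence dominant. Conversely, the two potentially ``bad'' loci
\[
B_1 := \pic^{l'}(Z)\setminus \overline{\im c^{l'}(Z)},\qquad B_2 := \{\calL \in \im c^{l'}(Z) : T_D c^{l'}(Z) \text{ fails to be surjective for some } D \mapsto \calL\}
\]
are (proper, unless empty) Zariski-closed subsets of $\pic^{l'}(Z)$, and their $\Phi$-preimages are proper closed subsets of $\calD$ which I add to the discriminant cutting out ``generic''. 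A generic $\tX$ lands in the complement of $\Phi^{-1}(B_1)\cup\Phi^{-1}(B_2)$, which yields $(a)\Leftrightarrow(b)\Leftrightarrow(c)$.

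For part (II), Grauert's semicontinuity applied to the universal bundle on $Z\times\pic^{l'}(Z)$ makes $\calL\mapsto h^1(Z,\calL)$ upper semicontinuous, so its jump locus $B_3\subset\pic^{l'}(Z)$ is Zariski closed. Including $\Phi^{-1}(B_3)$ in the discriminant gives $h^1(Z,\calO_Z(l'_{top}))=h^1(Z,\calL_{gen})$ for generic $\tX$, and the $h^0$ equality follows from Riemann-Roch on $Z$, which depends only on the Chern class.

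The crux of the whole argument, and the main obstacle, is the dominance of $\Phi$: one must show that varying the analytic structure on $\tX_{top}$ genuinely spreads the isomorphism class of $\calO_{Z_\lambda}(l'_{top}) \in \pic^{l'}(Z)$ (after trivializing by the Chern-class part, so that the range is really $\pic^0(Z)\simeq H^1(Z,\calO_Z)$). Geometrically this is a Kodaira-Spencer-type calculation identifying $d\Phi$ at a point with a map $T_\lambda\calD \to H^1(Z,\calO_Z)$, built from the \v{C}ech cocycles describing the change of the transition functions of $\calO_{Z_\lambda}(l'_{top})$ under an infinitesimal deformation concentrated at a node of $E$. Surjectivity is precisely where the negativity hypotheses $l'_{top,v}<0$ for $v\in\calv$ and $l'\in -\calS'$ enter: they guarantee that enough local ``ingredients'' are available at each node to realize every class in $H^1(Z,\calO_Z)$ as a deformation-induced shift. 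Everything else is a standard ``open-generic-stratum'' argument.
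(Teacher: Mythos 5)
There is a genuine gap, and it sits exactly where you locate it yourself. Your whole scheme reduces Theorem \ref{th:B} to the statement that the ``natural--bundle map'' $\Phi$ is dominant (more precisely, that $\Phi^{-1}$ of each bad locus is a \emph{proper} subvariety of Laufer's base $\calD$), and you offer no proof of this beyond an appeal to an unspecified Kodaira--Spencer computation and the remark that the negativity hypotheses supply ``enough local ingredients''. But this non-degeneracy is not a routine verification: it is the entire content of the cited theorem. In \cite{NNII} it is not obtained by showing that $\lambda\mapsto\calO_{Z_\lambda}(l'_{top})$ dominates the Picard fiber; it is proved by induction (on $h^1(\calO_Z)$ and on subcycles/towers), using explicitly constructed Laufer-type regluing deformations and the differential-form (Laufer duality) description of the Abel map, and by matching the resulting combinatorial formula for $h^1$ of natural bundles of generic structures with the formula for $h^1(Z,\calL_{gen})$ established in \cite{NNI}. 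Without some version of that work, your argument is circular: properness of $\Phi^{-1}(\mbox{bad locus})$ is equivalent to the conclusion you want.

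There are also structural inaccuracies in the reduction itself. The target $\pic^{l'}(Z_\lambda)$, the Abel map $c^{l'}(Z_\lambda)$, and hence your loci $B_1,B_2,B_3$ all vary with $\lambda$, so $\Phi$ and the sets $\Phi^{-1}(B_i)$ only make sense after setting up a relative Picard family over $\calD$ with the bad loci as constructible subfamilies; this is glossed over. Moreover $B_1$ as written (complement of $\overline{\im\,c^{l'}(Z)}$) is Zariski \emph{open}, not closed, and in the case where $c^{l'}(Z)$ is not dominant (a purely topological alternative, by Theorem \ref{th:dominant}) the locus the generic structure must avoid is $\overline{\im\,c^{l'}(Z)}$ itself, not its complement; a single ``avoid $\Phi^{-1}(B_1)\cup\Phi^{-1}(B_2)$'' discriminant therefore does not by itself yield (a)$\Leftrightarrow$(b). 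Finally, for part (II), putting the $h^1$-jump locus into the discriminant only shows that $h^1(Z,\calO_Z(l'_{top}))$ takes its minimal value along the deformation family; identifying that minimum with $h^1(Z,\calL_{gen})$ for the generic bundle of the \emph{fixed} $\pic^{l'}(Z)$ is again precisely the unproven dominance statement (or requires the two independent computations carried out in \cite{NNI} and \cite{NNII}). The $h^0$ step via Riemann--Roch is fine once the $h^1$ equality is available.
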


\subsection{The Abel map in the relative setup.}
We consider a resolution $\tX$ with resolution graph $\Gamma$ and an integral cycle $Z\geq E$  as in \ref{ss:4.1}.
Moreover, we take another integral cycle (maybe with smaller support) $Z_1 \leq Z$, and set
 $|Z_1| = \calv_1$ and the full subgraph $\Gamma_1$ associated with $|Z_1|$.

We have the restriction map $r: \pic(Z) \to \pic(Z_1)$ and one has also the (cohomological) restriction operator
  $R_1 : L'(\Gamma) \to L_1':=L'(\Gamma_1)$
(defined as $R_1(E^*_v(\Gamma))=E^*_v(\Gamma_1)$ if $v\in \calv_1$, and
$R_1(E^*_v(\Gamma))=0$ otherwise).
For any $\calL\in \pic(Z)$ they satisfy
%\begin{equation*}
$c_1(r(\calL))=R_1(c_1(\calL))$ (where $r$ is the restriction, see the diagram below).
%\end{equation*}
In particular,
we have the following commutative diagram as well:

\begin{equation*}  %\label{eq:diagr}
\begin{picture}(200,40)(30,0)
\put(50,37){\makebox(0,0)[l]{$
\ \ \eca^{l'}(Z)\ \ \ \ \ \ \ \ \ \stackrel{c^{l'}(Z)}{\longrightarrow} \ \ \ \pic^{l'}(Z)$}}
\put(50,8){\makebox(0,0)[l]{$
\eca^{R_1(l')}(Z_1)\ \ \stackrel{c^{R_1(l')}(Z_1)}{\longrightarrow} \  \pic^{R_1(l')}(Z_1)$}}
%\put(76,22){\makebox(0,0){{\tiny }$ \downarrow$}}
\put(182,22){\makebox(0,0){$\downarrow \, $\tiny{$r$}}}
\put(78,22){\makebox(0,0){$\downarrow \, $\tiny{$\fr$}}}
\end{picture}
\end{equation*}

By the `relative case' we mean that instead of the `total' Abel map
$c^{l'}(Z)$ we study its restriction above a fixed fiber of $r$.
That is, we fix some  $\mfl\in \pic^{R_1(l')}(Z_1)$,
we set the  subvariety $\eca^{l', \mfl}:=(r\circ c^{l'}(Z))^{-1}(\mfl)
=(c^{R_1(l')}(Z_1) \circ \fr)^{-1}(\mfl) \subset \eca^{l'}(Z)$, and  we study
the restriction  $\eca^{l', \mfl}\to r^{-1}(\mfl)$ of $c^{l'}(Z)$.
Note that it might happen that  $\eca^{l', \mfl}$ \ is empty.
However, if it is non--empty then by \cite[Corollary 5.1.4]{R}
 it is smooth and irreducible (similarly as any $\eca^{l'}(Z)$).
%We denote  the subvariety $(r\circ c^{l'}(Z))^{-1}(\mfl)
%=(c^{R_1(l')}(Z_1) \circ \fr)^{-1}(\mfl) \subset \eca^{l'}(Z)$  by $\eca^{l', \mfl}$.

%\begin{theorem}\cite{R} \label{cor:smoothirreddim}
%Fix $l'\in -\calS'$, $Z\geq E$, $Z_1 \leq Z$ and  $\mfl\in \pic^{R_1(l')}(Z_1)$ and assume that  $\eca^{l', \mfl}$ is nonempty.
%Then it is irreducible  and smooth of dimension $h^1(Z_1,\mfl)  - h^1(Z_1,\calO_{Z_1})+ (l', Z)$.
%\end{theorem}

\section{Proof of Theorem \ref{th:NEW2}{\it (2')}}\label{s:Proof2}

\subsection{}\label{ss:proofpart2}
We will prove {\it (2')} by induction on $h^1(\calO_{\tX})$. If $h^1(\calO_{\tX})=0$ then
{\it (2')} follows from  \ref{ss:proofrat}.
Assume that it is true for any pair $\tX\subset \tX_{top}$  with
 $h^1(\calO_{\tX})<p_g$ (for some integer $p_g>0$) and consider the new situation of
a certain $(\tX\subset \tX_{top},l'_{top})$ with $h^1(\calO_{\tX})=p_g$.
We fix also some $Z\in L$, $Z\gg 0$.

Though $\tX_{top}$ is an important ingredient, in some  discussions below
(in order to simplify the notations) we will neglect it tacitly; however,
in the key situations we will provide the needed information regarding $\tX_{top}$ as well
 (the completions at other parts are  rather immediate).

\bekezdes \label{bek:5.1.1}
By Laufer's duality (see e.g. \cite[7.1]{NNI}), $H^1(\calO_{\tX})^*\simeq H^0(\tX\setminus E, \Omega^2_{\tX})/
H^0(\tX, \Omega^2_{\tX})$, hence there exist $u\in\calv$ and a
 form $\omega \in H^0(\tX\setminus E, \Omega^2_{\tX})$ such that $\omega$ has a non--trivial pole
 along $E_u$. Let $t+1\geq 1$ be the largest such pole for some $u$.
 We claim that there exists $\omega$ and $E_u$ such that $t\geq 1$. Indeed, otherwise
$ H^0(\tX\setminus E, \Omega^2_{\tX})/H^0(\tX, \Omega^2_{\tX})=
H^0(\tX, \Omega^2_{\tX}(E))/H^0(\tX, \Omega^2_{\tX})$. But this last space, by Laufer's  duality
 (see \cite[7.1.3]{NNI}) is $H^1(\calO_E)^*$. Hence $p_g=h^1(\calO_E)=0$, a contradiction.

Hence, we assume that $t\geq 1$ and we blow up
 $E_u$ in a generic point $q_1$ and we get a new exceptional divisor $F_1$, then we blow up $F_1$ in a generic point
$q_2$   and we get $F_2$.
 We repeat this procedure $t$ times. Let $\tX_b$ (resp. $\tX_b^-$)
 denote a small neighbourhood of the union of the strict   transform of $E$
 (still denoted by $E$) with  $\cup_{i=1}^t F_i$ (resp. of $E\cup\cup_{i=1}^{t-1}F_i$).
 The dual graphs are denoted by $\Gamma_b$ and $\Gamma_b^-$.
 Let $b:\tX_b\to \tX$  denote  the modification and
 $R$ the cohomological restriction $L'(\Gamma_b)\to L'(\Gamma_b^-)$.

In parallel, we can consider the same blow ups at the very same points, and we get $\tX_{top,b}$.

 Then one has the following facts (for the notation see the statement of Theorem \ref{th:NEW2}):

\vspace{2mm}

 (i) $\tX_b\subset \tX_{top,b}$ and $\tX_b^-\subset \tX_{top,b}$ are generic pairs
 (with respect to their dual graphs).

(ii) $\calL_b:=b^*\calL$ and $\calL_b^-:=
b^*\calL|_{\tX_b^-}$ are restricted natural line bundles  (from $\tX_{top,b}$).

 (iii) $l'_b:= b^*(l')\in L'(\Gamma_b)$ satisfies $(l'_b, F_i)=0$ ($1\leq i\leq t$),
 hence $\calL_b$ cannot have base points along $F_i$.
Similarly,  $l'^{,-}_b:= R( l'_b)\in L'(\Gamma_b^-)$ satisfies $(l'^{,-}_b, F_i)=0$ ($1\leq i\leq t-1$),
hence $\calL^-_b$ cannot have base points along such $F_i$.

(iv) $l'_b\in \calS'_{an}(\tX_b)\setminus \{0\}$, \  $l'^{,-}_b\in \calS'_{an}(\tX_b^-)\setminus \{0\}$.

(v) $h^1(\tX_b, \calO_{\tX_b})=h^1(\tX,\calO_{\tX})=p_g$ and  $h^1(\tX_b,\calO_{\tX_b})>h^1(\tX_b^-,\calO_{\tX_b^-})$.

(vi) The maximum of  pole orders of  differential forms $\omega\in H^0(\tX_b\setminus E(\Gamma_b), \Omega^2_{\tX_b})$ along $F_t$ is  one.

\vspace{2mm}

 For (i) use  \ref{ss:genan} and Lemma \ref{lem:BLOWUP}.
 For (ii) see (\ref{bek:restrnlb}), for (iii)--(iv) use the projection formula.
 The first part of (v) follows from Leray spectral sequence argument.
For (vi) use the fact  that if a form has pole order $k$ along $F_i$ (with $F_0:=E_u$) then its pull--back
via the  blow up at an arbitrary point of $F_i$ has pole order at most $k-1$, and its pull--back
via the  blow up at the  {\it generic} point (with respect to that form)  has
pole order  $k-1$ along $F_{i+1}$. This also shows that there exists at least  one form
with non--trivial pole along $F_t$. Indeed, if we fix a form with pole order $t$ along $E_u$ and the centers of blow up
$q_i$ are generic with respect to this form, then the pull--back of this form has this property.
 This fact together with
(\ref{eq:ezlb})--(\ref{eq:ezlc}) applied for $E_I=F_t$ shows the second part of (v) as well.
%the property (6.1.12)
%and Theorem 8.1.3 from \cite{NNI}; see also Theorems

Note that $H^0(\tX,\calL)$ is naturally isomorphic to $H^0(\tX_b, \calL_b)$, hence {\it (2')}
for $(\tX\subset \tX_{top};\calL)$ or for $(\tX_b\subset \tX_{top,b};\calL_b)$ are equivalent.
Hence it is enough  to prove it for the second one.
%$(\tX_b; \calL_b)$.

Furthermore, the inductive step applies for  $(\tX_b^-\subset \tX_{top,b}; \calL_b^-)$, hence {\it (2')} is true for this case.

However, in general, the restriction map $H^0(\tX_b, \calL_b)\to H^0(\tX_b^-, \calL_b^-)$ is not surjective, hence a section $s_b^-\in
H^0(\tX_b^-, \calL_b^-)$, which satisfies {\it (2')} does not necessarily lift to $H^0(\tX_b, \calL_b)$.  But,
if it lifts, then it automatically satisfies {\it (2')} since the lift will have no divisor along $F_t$ by (iii).

In order to establish the existence of such a lift we will perturb the analytic structure of the pair
$\tX_b\subset \tX_{top,b}$ by preserving the type of
$\tX^-_b$. Hence,  $\calL_b$ (being the restriction of a natural bundle of $\tX_{top,b}$)
  will also be perturbed by the corresponding restriction natural line bundle
associated with Chern class $l'_{b,top} = b^*(l'_{top})$. However, the construction will guarantee
that the pair $(\tX_b^-; \calL_b^-)$ will stay stable.
Then we show that for a  generic element of the perturbation the lifting is possible. (On the other hand,  since
the original $(\tX_b\subset \tX_{top,b};\calL_b)$ was generic, it has the very same properties as any
 small perturbation of it,
hence the lifting follows for the original $(\tX_b\subset \tX_{top,b};\calL_b)$ too.)

The analytic structure of $\tX_b\subset \tX_{top,b}$ will be perturbed via the following additional construction.

\bekezdes \label{bek:5.1.2}
First, we fix $n$ generic points $\{p_i\}_{i=1}^n$ on $F_t$  and we blow up $\tX_b$ at these
points. This modification is denoted by $B:\tX_B\to \tX_b$, respectively
$\tX_{top,B}\to \tX_{top,b}$. %Denote by $\{E_i^{new}\}_i$  the new exceptional curves.

 The strict transforms of
$\{E_v\}_{v\in \calv}$ and $\{F_i\}_{i=1}^{t-1}$ are denoted by the same symbols, while the
strict transform of $F_t$ by $F_{t,B}$. Let $\Gamma_B$ be the dual graph of $\tX_B$,
and let $\tX_B^-$ be a small convenient neighbourhood of $\sum_vE_v\cup \cup_{i=1}^{t-1}F_i \cup F_{t,B}$
in $\tX_B$ with dual graph $\Gamma_B^-$. (Note that $F_{t,B}^2\not=F_t^2$ and  $\Gamma_B^-\not=\Gamma_b$ , though their shapes are the same.) Additionally, set $\calL_B:=B^*\calL_b=B^*b^*\calL$ and $\calL_B^-:=
\calL_B|_{\tX_B^-}$. They have Chern classes $l'_{B}:= B^*b^*l'\in L' (\Gamma_B)$
 and its cohomological restriction $l'^{,-}_B$  into $L'(\Gamma_B^-)$, respectively.

Write also $Z_B:=B^*b^*Z$ and $Z_B^-:= Z_B|_{L(\Gamma_B^-)}$ (projection to the exceptional curves from $\Gamma_B^-$).

Then the analogues of (i)--(vi) from \ref{bek:5.1.1} are the following:

\vspace{2mm}

 (i) $\tX_B\subset \tX_{top,B}$ and $\tX_B^-\subset \tX_{top,B}$ are generic (with respect to their dual graphs).

(ii) $\calL_B$ and $\calL_B^-$ are restricted natural line bundles (from $\tX_{top,B}$).

 (iii) $(B^*b^*(l'), E_i^{new})=0$ for $1\leq i\leq n$,
 where  $\{E_i^{new}\}_i$  are the exceptional curves of $B$,
%hence $\calL_B$ cannot have base points along $E_i^{new}$.

(iv) $l'_B\in \calS'_{an}(\tX_B)\setminus \{0\}$, \  $l'^{,-}_B\in \calS'_{an}(\tX_B^-)\setminus \{0\}$.

(v) $p_g(\tX_B)=p_g(\tX_b)$.

(vi) $p_g(\tX_B)=p_g(\tX_B^-)$ and  the restriction realizes an isomorphism
$\pic^{-l'_B}(Z_B)\stackrel{\simeq}{\longrightarrow}\pic^{-l'^{,-}_B}(Z_B^-)$.

\vspace{2mm}

Part (vi)
 follows again from statements from \ref{bek:I}, since along $E^{new}_i$ none of the differential forms have got a pole
 (by the same reason as in the proof of (vi) from \ref{bek:5.1.1}).

\bekezdes\label{bek:5.1.3} \ $\tX_b^-$ embeds naturally into $\tX_B^-$ and $\calL_B^-|_{\tX^-_b}=\calL^-_b$.
Hence we have the following commutative diagram:
\begin{equation*}  %\label{eq:diagr}
\begin{picture}(200,40)(30,0)
\put(50,37){\makebox(0,0)[l]{$
\ \ \eca^{-l'^{,-}_B}(Z^-_B)\ \ \ \ \ \stackrel{c^-_B }{\longrightarrow} \ \ \ \pic^{-l'^{,-}_B}(Z^-_B)$}}
\put(50,8){\makebox(0,0)[l]{$
\ \ \eca^{-l'^{,-}_b}(Z^-_b)\ \ \ \ \ \stackrel{c^-_b}{\longrightarrow} \ \ \ \pic^{-l'^{,-}_b}(Z^-_b)$}}
%\put(76,22){\makebox(0,0){{\tiny }$ \downarrow$}}
\put(200,22){\makebox(0,0){$\downarrow \, $\tiny{$r$}}}
\put(90,22){\makebox(0,0){$\downarrow \, $\tiny{$\fr$}}}
\end{picture}
\end{equation*}
Above, $r$ is an affine projection associated with the surjective linear projection
$H^1(\calO_{\tX^-_B})\to H^1(\calO_{\tX_b^-})$. Since $H^1(\calO_{\tX^-_B})\simeq H^1(\calO_{\tX_B})
\simeq H^1(\calO_{\tX})$ (cf. (vi) of
\ref{bek:5.1.2}) the fiber has dimension $p_g-h^1(\calO_{\tX^-_b})>0$.

In $\pic^{-l'^{,-}_b}(Z^-_b)$ we fix $\calL^-_b=b^*\calL|_{\tX^-_b}$. Recall that for the
system  $(\tX^-_b\subset \tX_{top,b};\calL^-_b)$ the statement of the induction holds. Then we study the relative Abel map, the restriction of $c^-_B$
\begin{equation}\label{eq:RelAb}
\eca_{rel}:=  \eca^{-l'^{,-}_B, \calL^-_b}(Z^-_B)\ \stackrel{c_{rel}}{\longrightarrow} \  r^{-1}(\calL^-_b).
\end{equation}
Recall that $\eca_{rel}$ consists of effecive Cartier divisors over $Z^-_B$ with Chern class $-l'^{,-}_B$ whose line bundle restricted to $\tX^-_b$ is exactly $\calL^-_b$.

\bekezdes \label{bek:5.1.4} {\it We claim that  $c_{rel}$ is dominant.} (This in \cite{R} is developed as
`relative dominancy'.)

Indeed, since   $l'^{,-}_B\in \calS'_{an}(\tX_B^-)\setminus \{0\}$ (cf. (iv) of \ref{bek:5.1.2})
there exists $D\in  \eca^{-l'^{,-}_B}(Z^-_B)$, $D\not=\emptyset$,
so that $c^-_B(D) = \calL^-_B\in r^{-1} (\calL^-_b)$.
Since $\tX^-_B$ is generic (cf. (i) of \ref{bek:5.1.2}),
by Theorem \ref{th:B} $T_Dc^-_B$ is surjective, hence $c^-_B$ is a local submersion at $D$. In particular, there exists an analytic open set $V \subset \pic^{-l'^{,-}_B}$ so that $\calL^-_B\in V\subset \im(c^-_B)$. Then
$V\cap r^{-1}(\calL^-_b)$ is an analytic open set in $r^{-1}(\calL^-_b)$ and it is in the image of $c_{rel}$. But $c_{rel}$ is an algebraic map, hence it is necessarily dominant.

\bekezdes \label{bek:5.1.5}
Next, we compare  $\eca^{-l'^{,-}_B}(Z^-_B)$ and $\eca^{-l'^{,-}_b}(Z^-_b)$. Since $(l'^{,-}_B, F_{t,B})=(b^*(l'),F_t)=0$,
in the first space no divisor is allowed, which has support along $F_{t,B}$.

However, in the second space divisors with support $q_t=F_{t-1}\cap F_t$ are allowed
(they might appear if $t=1$.)
Let
$\eca^{-l'^{,-}_b}(Z^-_b)_{q_t} $  be the Zariski open set of $\eca^{-l'^{,-}_b}(Z^-_b)$ consisting of those
divisors whose support does not contain $q_t$. Then $\eca^{-l'^{,-}_b}(Z^-_b)_{q_t} $ and
$\eca^{-l'^{,-}_B}(Z^-_B)$ can be identified.
Hence $D$ can be transported into $\eca^{-l'^{,-}_b}(Z^-_b)_{q_t} $ as well.
Furthermore,
consider ${\rm div}:H^0(Z^-_b,\calL^-_b)_{\reg}\to \eca^{-l'^{,-}_b}(Z^-_b)$, which associates with a section its divisor.
It is surjective onto $(c^-_b)^{-1}(\calL^-_b)$.
Let $H^0(Z^-_b,\calL^-_b)_{\reg,q_t}$ be ${\rm div}^{-1}(\eca^{-l'^{,-}_b}(Z^-_b)_{q_t}) $. It consists
of section, which do not vanish at $q_t$.

We claim that  $H^0(Z^-_b,\calL^-_b)_{\reg,q_t}$ is a {\it non--empty} Zariski open set in $H^0(Z^-_b,\calL^-_b)_{\reg}$.
Indeed, if all the  sections vanish at $q_t$, since $q_t$ was chosen generically (cf. \ref{bek:5.1.1}),
we get that all the sections vanish along
$F_{t-1}$, hence at $q_{t-1}$ too. Since $q_{t-1}$ is also generic, we get vanishing along $F_{t-2}$ and at $q_{t-2}$.
By induction  we get vanishing along $E_u$, a contradiction, since $E_u$ is not a fixed component.

In this way we obtain a {\it surjective} map
\begin{equation}\label{eq:div}
{\rm div}: H^0(Z^-_b,\calL^-_b)_{\reg,q_t}\to \eca^{-l'^{,-}_B,\calL^-_b}(Z^-_B).
\end{equation}
\bekezdes \label{bek:5.1.6}
Now, we apply the induction for the pair $(\tX^-_b\subset \tX_{top,b}; \calL^-_b)$. By this, there
exists a section of $\calL^-_b$ which satisfies {\it (2')}. Let $U$ be the non--empty  Zariski open set
in  $H^0(Z^-_b,\calL^-_b)_{\reg,q_t}$  consisting of sections with property {\it (2')}. Since both ${\rm div}$ and $c_{rel}$ are dominant, $c_{rel}({\rm div}(U))\subset r^{-1}(\calL^-_b)$ contains a non--empty Zariski open set $U_{\pic}$.
Any  bundles from $U_{\pic}$ has the property that its restriction to $\tX^-_b$ is $\calL^-_b$, and it  has
a section which satisfies {\it (2')}.

We will show that (under the initial genericity assumption) the natural line bundle $\calL_B^-$ is in
$U_{\pic}$.

\bekezdes \label{bek:5.1.7} Now we concentrate  on the position of $\calL^-_B$ in $r^{-1}(\calL^-_b)$.

We show that by a conveniently constructed family of perturbations of $(\tX_B\subset \tX_{top,B};\calL_B)$,
the perturbed $\calL_B^-=\calL_B|_{\tX_B^-}$
will move in a small  analytic  open set of $r^{-1}(\calL_b^-)$, hence it necessarily
will intersect $U_{\pic}$.  Since $\tX_B$ itself is generic, we can assume that $\calL_B^-$ itself is an element of $U_{\pic}$.

Let  $T_i $ be a tubular neighbourhood of a  $(-1)$--curve $\tilde{E}_i$ in a smooth surface ($i=1, \ldots, ,n$).
Note that $\tX_B$ is obtained from $\tX_B^-$ by an  analytic plumbing: we glue $\tX_{B}^-$ with
the spaces $T_i$ such that $\tilde{E}_i$ is identified with  $E_i^{new}$, hence $\tilde{E}_i\cap F_{t,B}=p_i$.
In the construction of the flat deformation we glue $T_i$ with $\tX_B^-$ such that $\tilde{E}_i\cap F_{t,B}$
moves  in a small neighbourhood of $p_i\in F_{t,B}$.
 Hence we get a flat family over the parameter germ-space
 $(F_{t,B}^n, (p_1,\ldots, p_n))$ with fibers
 $\tX_{B,\lambda}$ ($\lambda\in (F_{t,B}^n, \{p_i\}_i)$).
  It is convenient to rename each $\tilde{E}_i$ by
$E^{new}_{i,\lambda}$.
  If we blow down the $\{E^{new}_{i,\lambda}\}_i$ and the
 $\{F_j\}_j$ curves then we get a flat deformation of the structure of $\tX$ (in the sense of
 Laufer \cite{LaDef1}, cf. \cite{NNII}).
 For the precise description of these deformations/gluings see \ref{bek:5.1.def}.
 Furthermore, by the very same deformation (regluings)  we obtain a flat family
 $\{\tX_{top,B,\lambda}\}_{\lambda}$ too, hence pairs
 $\tX_{B,\lambda}\subset \tX_{top,B,\lambda}$. $\tX_{top,B,\lambda}$
 is the level  where the natural line bundles are defined, and
 their restrictions are the corresponding `restricted natural line bundles'  in $\pic(\tX_{B,\lambda})$ and
 $\pic(\tX^-_{B,\lambda})$.  Now, for any $\lambda$, one can consider all the data defined in the
 previous subsections for $\tX_B^-$ and $\tX_b^-$.

 It is crucial to notice that $\tX^-_B$ embeds naturally into each $\tX_{B,\lambda}$, hence provides a constant family of subspaces over the parameter space.
 The next key observation  follows from Lemma \ref{lem:resNat}:

 \begin{lemma}\label{lem:CONSTANT}
 $\calL^-_{b,\lambda}:=\calO_{\tX_{top,B,\lambda}}(-l'_{top,B})|_{\tX^-_b}\in \pic(\tX^-_b)$ is independent of $\lambda$, it is exactly
 $\calL^-_b$.
 \end{lemma}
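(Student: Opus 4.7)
The plan is to apply Lemma \ref{lem:resNat} directly to reduce the claim to three independent assertions: the Chern class $l'_{top,B}\in L'(\Gamma_{top,B})$ is combinatorial hence constant in $\lambda$; the analytic type of $\tX_b^-$ is constant in $\lambda$; and the analytic type of the non-compact divisor $E_{top,B,\lambda}\cap \tX_b^-$ of $\tX_b^-$ is constant in $\lambda$. The only identification needed at the end is that, for the special $\lambda$ corresponding to the original (unperturbed) gluing, the restricted natural line bundle reproduces $\calL^-_b=b^*\calL|_{\tX_b^-}$.

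First, I would record that $\tX_b^-$ is a constant analytic subspace of the family. By the explicit construction of \ref{bek:5.1.def}, the family $\{\tX_{B,\lambda}\}$ is obtained by regluing the tubular neighborhoods $T_i$ of the $(-1)$-curves $E^{new}_i$ to the fixed space $\tX_B^-$, varying only the gluing point $\tilde E_i\cap F_{t,B}$ within a small neighborhood of $p_i$ on $F_{t,B}\subset \tX_B^-$. In particular $\tX_B^-$ sits as a constant subspace in every $\tX_{B,\lambda}$ (and every $\tX_{top,B,\lambda}$), and since $\tX_b^-$ (a neighborhood of $E\cup F_1\cup\cdots\cup F_{t-1}$) is strictly contained in $\tX_B^-$ (which additionally contains $F_{t,B}$), the same holds for $\tX_b^-$.

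Second, I would verify constancy of the non-compact divisor $D_\lambda:=E_{top,B,\lambda}\cap \tX_b^-$. Its components split into two types: (a) germs at points of $E$ of the strict transforms of those components of $E_{top}$ that lie in $\Gamma_{top}\setminus\Gamma$, pulled back by $B\circ b$; (b) the germ of $F_{t,B}$ at the node $q_t=F_{t-1}\cap F_{t,B}$. For (a), these germs depend only on the fixed ambient datum $\tX_{top}$ restricted near $E$, which is untouched by the regluing (the regluing is supported in a neighborhood of the points $p_i\in F_{t,B}$ away from $E$). For (b), the node $q_t$ and its neighborhood lie entirely inside the constant subspace $\tX_B^-$, so the germ of $F_{t,B}$ at $q_t$ is constant in $\lambda$. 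Hence $D_\lambda$ is constant.

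Combining these three pieces via Lemma \ref{lem:resNat} shows $\calL^-_{b,\lambda}$ is independent of $\lambda$. Specializing to the unperturbed parameter gives $\calL^-_{b,\lambda}=\calO_{\tX_{top,b}}(-b^*l'_{top})|_{\tX_b^-}=b^*\calL|_{\tX_b^-}=\calL^-_b$, using Lemma \ref{lem:BUnatline} to identify the pull back of a natural line bundle. The only delicate point is the verification for (a) above, namely that the regluing deformation is genuinely localized away from the locus where the non-compact germs of $E_{top,B}$ meet $\tX_b^-$; this is a direct unpacking of the Laufer-style plumbing construction referenced in \ref{bek:5.1.def}, rather than a substantive difficulty.
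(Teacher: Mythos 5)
Your proposal is correct and follows the paper's own route: the paper derives Lemma \ref{lem:CONSTANT} precisely as an application of Lemma \ref{lem:resNat}, observing that the regluing along the points $p_i\in F_{t,B}$ leaves the Chern class, the analytic type of $\tX_b^-$, and the non--compact divisor $E_{top,B,\lambda}\cap \tX_b^-$ unchanged. Your write-up merely spells out these verifications (including the localization of the deformation away from $E$ and from $q_t$, and the identification of the unperturbed bundle with $\calL^-_b$ via Lemma \ref{lem:BUnatline}) in more detail than the paper does.
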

 Since  $\tX_B^-$ and $\calL^-_{b,\lambda}$ are  constant with respect to $\lambda$,
 all the objects considered in the subsections \ref{bek:5.1.3}--\ref{bek:5.1.6} stay stably,
 except
 $\calL^-_{B,\lambda}:=\calO_{\tX_{top,B,\lambda}}(-l'_{top,B})|_{\tX^-_B}\in r^{-1}(\calL^-_b)\subset
 \pic(\tX^-_B)=\pic(Z^-_B)$ (and this is exactly the point, since we wished to `move' the position of $\calL^-_B\in r^{-1}(\calL^-_b)$).

 \bekezdes \label{bek:5.1.8} We claim that for $n\gg0$ and for $\lambda\in (F_{t,B}^n, \{p_i\}_i)$
 the bundle  $\calL^-_{B,\lambda}$ moves in an analytic open set of $ r^{-1}(\calL^-_b)$.
  Here the definition of the natural line bundles will play a role. Indeed, it is enough to verify
  the statement for any  multiple of $\calL^-_{B,\lambda}$. Set $N\gg 1$ so that
  $N\cdot l'_{top,B}=N\cdot B^*b^*(l'_{top})$ can be written as
  $l+m\sum _iE^{new}_{i,\lambda}$, where $l\in L(\tX^-_B)$ and $m\in \Z$.
Note that $m/N$ is the $E_u$--multiplicity  $l'_{top,u}$ of $l'_{top}$, which is positive by the assumption of
Theorem \ref{th:NEW2}. Hence $m>0$ too. This shows that $(\calL^-_{B,\lambda})^{\otimes N}=\calO_{Z_{B}^-}(-l)
\otimes \calO_{Z_{B}^-}(-\sum_i E^{new}_{i,\lambda}\cap \tX_B^-)^{\otimes m}$, where
$\calO_{Z_{B}^-}(-l)$ is again $\lambda$--independent.
Hence,  it is enough to determine the dimension of the space filled by the second contribution when
$\lambda$ moves in its parameter space.

\bekezdes \label{bek:5.1.9} Note that $-\sum_i E^{new}_{i,\lambda}\cap \tX_B^- $ consists of
$n$ generic transversal
divisors in $\eca^{-nF^*_{t,B}}(Z^-_B)$
and we are interested in the dimension of the image of the Abel map
$c^{-nF^*_{t,B}}(Z^-_B): \eca^{-nF^*_{t,B}}(Z^-_B)\to \pic^{-nF^*_{t,B}}(Z^-_B)$.
 This by the results of \ref{bek:I} (see also \cite{NNI}), for $n$ sufficiently large,  is  $h^1(\calO_{\tX_B^-})-h^1(\calO_{\tX^-_b})$,   hence it equals
$\dim (r^{-1}(\calL^-_b))$ too.
Note that the line bundles  $ \calO_{Z_{B}^-}(-\sum_i E^{new}_{i,\lambda}\cap \tX_B^-)$
depend only on the position of the points $\{p_i\}_i$ on $F_{t,B}$.
This follows from the fact that  all the differential forms along $F_{t,B}$ have pole order $\leq 1$
 (and from the explicit  description of the Abel map via integration, cf. \cite[7.2]{NNI}).

In particular, when we move $\lambda$ in its parameter germ, the bundle
$\calL^-_{B,\lambda}$ covers an open subset of $ r^{-1}(\calL^-_b)$.
In particular, for some $\lambda$ it is in $U_{\pic}$. Since $\tX$ was already generic, $\tX_\lambda$ can be replaced by $\tX$, hence we can assume in the sequel that $\calL_B^- \in U_{\pic}$.
This means that $\calL^-_B$ has a section whose divisors are smooth and intersect the exceptional curve transversally.

 \bekezdes \label{bek:5.1.10} Next we lift this property to the level of $\tX_B$. Consider the diagram
\begin{equation*}  %\label{eq:diagr}
\begin{picture}(200,45)(30,0)
\put(50,37){\makebox(0,0)[l]{$
\ \ \eca^{-l'_B}(Z_B)\ \ \ \ \ \stackrel{c_B }{\longrightarrow} \ \ \ \pic^{-l'_B}(Z_B) \ \ni \, \calL_B$}}
\put(50,8){\makebox(0,0)[l]{$
\ \ \eca^{-l'^{,-}_B}(Z^-_B)\ \ \  \stackrel{c_B^-}{\longrightarrow} \ \ \ \pic^{-l'^{,-}_B}(Z^-_B)\ \ni \, \calL_B^-$}}
%\put(76,22){\makebox(0,0){{\tiny }$ \downarrow$}}
\put(200,22){\makebox(0,0){$\downarrow \, $\tiny{$r_B$}}}
\put(90,22){\makebox(0,0){$\downarrow \, $\tiny{$\fr_B$}}}
\end{picture}
\end{equation*}
Then  $r_B$ is an isomorphism by \ref{bek:5.1.2}(vi), and $r(\calL_B)=\calL_B^-$. Moreover, $\fr_B$ is
bijection (identity) too by \ref{bek:5.1.1}(iii) and \ref{bek:5.1.2}(iii).
By the previous paragraph, there exists $D^-\in
\eca^{-l'^{,-}_B}(Z^-_B)$ with $c^-_B(D^-)=\calL^-_B$ and property {\it (2')}, hence
$D:=\fr_B^{-1}(D^-)$ satisfies  $c_B(D)=\calL_B$ and property {\it (2')} too.

On the other hand,  Theorem \ref{th:NEW2}{\it (2')} for $(\tX;\calL)$ and $(\tX_B;\calL_B)$ are equivalent by
blow up.

\bekezdes \label{bek:5.1.def}  Finally, we describe the  deformation
%\marginpar{{\bf kell ez ???}}
of a fixed resolution, which was used in \ref{bek:5.1.7}.

We choose any good resolution $\phi:(\tX,E)\to (X,o)$, and write
 $\cup_v E_v=E=\phi^{-1}(o)$ as above.
Since each $E_v$ is rational, a small tubular neighborhood of $E_v$ in $\tX$ can be identified with
the disc-bundle associated with the total space $T(e_v)$ of $\calO_{\bP^1}(e_v)$, where $e_v=E_v^2$.
(We will abridge $e:=e_v$.)
Recall that $T(e)$ is obtained by gluing $\C_{u_0}\times \C_{v_0}$ with $\C_{u_1}\times \C_{v_1}$
via identification  $\C_{u_0}^*\times \C_{v_0}\sim\C_{u_1}^*\times \C_{v_1}$,
$u_1=u_0^{-1}$, $v_1=v_0u_0^{-e}$, where $\C_w$ is the affine line with coordinate $w$, and $\C^*_w=\C_w\setminus \{0\}$.

Next, fix any curve $E_w$ of $\phi^{-1}(o)$  and also a {\it generic} point $P_w\in E_w$.
There exists an identification  of the tubular neighbourhood of $E_w$ via $T(e)$ such that
 $u_1=v_1=0$ is $P_w$.
By blowing up $P_w\in \tX$ we get a second resolution $\psi:\tX'\to \tX$;  the strict transforms of $\{E_v\}$'s will be denoted by $E_v'$, and the new
exceptional $(-1)$ curve by $E^{new}$. If we contract  $E_w'\cup E^{new}$ we get a cyclic quotient singularity, which is taut, hence the
 tubular neighbourhood of $E_w'\cup E^{new}$ can be identified with the tubular neighbourhood
 of the union of the zero sections in $T(e-1)\cup T(-1)$. Here we represent $T(e-1)$ as the gluing
 of $\C_{u_0'}\times \C_{v_0'}$ with $\C_{u_1'}\times \C_{v_1'}$ by
$u_1'=u_0'^{-1}$, $v_1'=v_0'u_0'^{-e+1}$. Similarly, $T(-1)$ as
$\C_{\beta}\times \C_{\alpha}$ with $\C_{\delta}\times \C_{\gamma}$ by
$\delta=\beta^{-1}$, $\gamma=\alpha\beta$. Then $T(e-1)$ and $T(-1)$ are glued
along $\C_{u_1'}\times \C_{v_1'}\sim \C_{\beta}\times \C_{\alpha}$
by
$u_1'=\alpha$, $v_1'=\beta$ providing a neighborhood of $E_w'\cup E^{new}$ in $\tX'$.
%The map $\psi$ (restricted on the corresponding copies of $\C^2$) is
%$u_0=u_0'$, $v_0=v_0'$; $u_1=u_1'$, $v_1=u_1'v_1'$; $u_1=\alpha=\gamma\delta$, $v_1=\alpha\beta=\gamma$.
Then the neighbourhood  $\tX'$ of  $\cup_vE_v'\cup E^{new}$ will be modified by the following 1--parameter family of spaces: the neighbourhood of $\cup_vE_v' $ will stay unmodified, however
$T(-1)$, the neighbourhood of $E^{new}$ will be glued along $\C_{u_1'}\times \C_{v_1'}\sim \C_{\beta}\times \C_{\alpha}$  by
$u_1'+\lambda =\alpha$, $v_1'=\beta$, where $\lambda \in (\C,0)$ is a small holomorphic parameter.

\section{Proof of Theorem \ref{th:NEW2}{\it (3')-(4')}}\label{s:Proof3}

\subsection{} Fix a vertex $v\in\calv$,  which satisfies the assumptions of  {\it (3')}.
Additionally we keep all the constructions and notation of
section \ref{s:Proof2} (proof of part {\it (2')}) as well.

\bekezdes \label{bek:6.1}
Let $o$ be a {\it generic} point of $E_v$ and $\pi_o:\tX_{top,B,o}\to \tX_{top, B}$ be the blow up  at $o$,
$\pi_o : \tX^-_{B,o}\to \tX^-_B$ its restriction over $\tX^-_B$,
and $E_o$  the created exceptional curve.
  Let $\Gamma^-_{B,o}$ be the dual graph of $\tX^-_{B,o}$ and  $l'_o:=\pi^*_o(l'^{,-}_B)
\in L'(\Gamma^-_{B,o}) $.  Since $l'^{,-}_B\in \calS'_{an}(\tX^-_B)\setminus \{0\}$, cf. \ref{bek:5.1.2}(iv),
using
the  pullback of the generic section we get $l'_o\in  \calS'_{an}(\tX^-_{B,o})\setminus \{0\}$ too.
However,  we claim that
 under the assumption of {\it (3')} one also  has
\begin{equation}\label{eq:x}
l'_o+E_o\in \calS'_{an}(\tX^-_{B,o})\setminus \{0\}.
\end{equation}
We give two proofs (a combinatorial one and a geometric one).

\bekezdes \label{bek:6.2} Since $\tX^-_{B,o}\subset \tX_{top, B,o}$ is generic (cf. Lemma \ref{lem:BLOWUP})
and $\pi_o^*(\calL^-_B)$ is a restricted natural line bundle (cf. \ref{bek:restrnlb}),
$l'_o+E_o\in \calS_{an}(\tX^-_{B,o})\setminus \{0\}$
(that is, $\pi_o^*(\calL^-_B)(-E_o)$ is in the image of the  corresponding Abel map) if and only if that Abel map is dominant (cf.
Theorem \ref{th:B}), and this fact happens if and only if (cf. Theorem \ref{th:dominant})
$\chi(l'_o+E_o+\tilde{l})>\chi(l'_o+E_o)$
for any $\tilde{l}\in L(\Gamma^-_{B,o})$, $\tilde{l}>0$. This rewritten is:
%\begin{equation}\label{eq:chi6}
$\chi(\tilde{l})\geq (l'_o+E_o,\tilde{l})+1$.
%\end{equation}
Write $\tilde{l}$ as $\pi_o^*(l)+kE_o$, $l\in L(\Gamma^-_B)$, $k\in\Z$. Then %(\ref{eq:chi6})
the needed inequality at  $\Gamma^-_B$--level  reads as
\begin{equation}\label{eq:chi6b}
\chi(l)+ k(k+1)/2\geq (l'^{,-}_B,l)-k+1.
\end{equation}
If $l>0$ then the assumption of {\it (3')} gives $\chi(l)\geq (l'^{,-}_B,l)+2$, hence
(\ref{eq:chi6b}) follows. If $l=0$ then necessarily $k\geq 1$, hence (\ref{eq:chi6b}) follows again.

\bekezdes \label{bek:6.3} The second  proof is more geometrical (it constructs the needed section).
By assumption, $\dim \im \big(H^0(\tX_B,\calL_B)\to H^0(E_v,\calL_B)\big)\geq 2$. By restriction we get the same
property at the level of
$(\tX^-_B,\calL^-_B)$ too. Hence, there exists two section $s_1, s_2\in H^0(\tX^-_B,\calL^-_B)$ such that
their divisors restricted to $E_v$ (that is, in $\eca^{-l'}(E_v)$)  do not agree. Such elements of $\eca^{-l'}(E_v)$
can be reinterpreted as the set of  roots of a polynomial of degree $-(l',E_v)$. Then one verifies
that for any generic $o\in E_v$ there exists constants $\lambda_1,\lambda_2$ such that
$s_o:=\lambda_1s_1+\lambda_2s_2$ restricted to $E_v$ has a simple root at $o$. Then the pull--back of $s_o$  to $\tX^-_{B,o}$ realizes the divisor  $l'_o+E_o$.

\bekezdes \label{bek:6.4} Next, for $o\in E_v$ generic,  we consider the Abel map with Chern class $-l'_o-E_o$
$$ \eca^{-l'_o-E_o}(\pi_o^*(Z^-_B)-E_o)\to \pic ^{-l'_o-E_o}(\pi_o^*(Z^-_B)-E_o).$$
(The modification of $\pi_o^*(Z^-_B)$ into $\pi_o^*(Z^-_B)-E_o$  will be explained/motivated  in \ref{bek:6.5}.)

Using (\ref{eq:x}) and Theorem \ref{th:B} we get that this Abel map is dominant; even more, for any divisor $D_o$
of $\pi^*_o(\calL^-_B)(-E_o)$, the corresponding tangent map at $D_o$ is surjective.

\bekezdes \label{bek:6.5} Next we make the following identification. For $o$ a generic point of $E_v$,
let $\eca^{-l'^{,-}_B}(Z^-_B)_o$ be the subspace of $\eca^{-l'^{,-}_B}(Z^-_B)$ consisting of those
divisors $D$ whose support contains $o$, and $D$ localized at $o$ is `smooth and transversal to $E_v$'.

We wish to compare this space with the space from  \ref{bek:6.4}. Note that $(-l'_o-E_o,E_o)=1$, hence
any divisor from  $\eca^{-l'_o-E_o}(\pi_o^*(Z^-_B)-E_o)$ intersects $E_o$ with multiplicity one.
Let  $\eca^{-l'_o-E_o}(\pi_o^*(Z^-_B)-E_o)_o $ be the Zariski open set of $\eca^{-l'_o-E_o}(\pi_o^*(Z^-_B)-E_o)$
consisting of those divisors whose support does not contain $E_o\cap E_v$.
We claim that there exists an isomorphism of spaces
\begin{equation}\label{eq:IDENTIFICATION}
\eca^{-l'^{,-}_B}(Z^-_B)_o\to \eca^{-l'_o-E_o}(\pi_o^*(Z^-_B)-E_o)_o.\end{equation}
Indeed, the isomorphism is induced by pull--back of Cartier divisors via $\pi_o^*$.
Let us present  the verification in the relevant  local chart  (for more details
see  the proof of \cite[Theorem 3.1.10]{NNI}).
Fix local coordinates  $(x,y)$ in a neighbourhood of $o$
when  $E_v=\{x=0\}$ and let the multiplicity of $Z$ along $E_v$ be $N$.
Then  the component of a divisor $D$
from  $\eca^{-l'^{,-}_B}(Z^-_B)_o$ with support $o$ (after we eliminate the equivalence via a
multiplication by $\C^*$) can be given by the equation $f=y+P_0(x)+ym_0$ (modulo $x^N$),
where $P_0(x)=\sum_{i\geq 1}a_ix^i$ and $m_0$ belongs to the maximal ideal
$\m_o$ of $\C\{x,y\}$. The equivalence $\sim$ is multiplication  by elements from $1+\m_o$ (modulo $x^N$).
If we multiply $f$ by $(1+m_0)^{-1}$ and we group the $\{x^i\}_{i\geq 1}$ terms we get
$f\sim y+P_1(x)+xym_1$ ($m_1\in\m_o$).
Multiplication by $(1+xm_1)^{-1}$ gives $f\sim y+P_2(x)+x^2ym_1$. By induction
$f\sim y+P_N(x)$ (modulo $x^N$).
Hence a smooth chart  of $\eca^{-l'^{,-}_B}(Z^-_B)_o$)
(up to other product--factors given by other components of $D$ with support
disjoint from $o$, and which are transferred by $\pi^*_o$ trivially)
can be parametrized as $\{a_i\}_{i=1}^{N-1}\mapsto
\{\mbox{the class of } \ y+\sum_{i=1}^{N-1}a_ix^i\}$. This lifts
by $\pi_o= (x=\alpha\beta,y=\beta)$ to the divisor
  $\beta+\sum_{i=0}^{N-2}a_i\alpha^{i}$ (modulo $(\alpha^{N-1})$).

In fact, this product--factor in the chart of  $\eca^{-l'^{,-}_B}(Z^-_B)_o$
extends naturally to  $\{a_i\}_{i=0}^{N-1}\mapsto
\{\mbox{the class of } \ y+\sum_{i=0}^{N-1}a_ix^i\}$, providing a chart for
 $\eca^{-l'^{,-}_B}(Z^-_B)$, and showing
that $\eca^{-l'^{,-}_B}(Z^-_B)_o$ is a smooth,   constructible and irreducible   subspace
 $\eca^{-l'^{,-}_B}(Z^-_B)$ of codimension one. %(in this chart given by the coordinate vanishing $a_0=0$).

 Note that (since $Z\gg 0$) the dimension of $\pic ^{-l'_o-E_o}(\pi_o^*(Z^-_B)-E_o)$ and
$ \pic ^{-l'^{,-}_B}(Z^-_B)$ are the same, they equal $p_g$.

 \bekezdes \label{bek:6.6} Then \ref{bek:6.4} and \ref{bek:6.5} combined give
 that the restriction of $c^-_B$
 $$c^-_{B,o}: \eca^{-l'^{,-}_B}(Z^-_B)_o\to \pic^{-l'^{,-}_B}(Z^-_B)$$
 is dominant and for any divisor  $D_o\in \eca^{-l'^{,-}_B}(Z^-_B)_o$
of $\pi^*_o(\calL^-_B)$, the corresponding tangent map at $D_o$ is surjective.
Then we repeat the constructions and arguments of paragraphs \ref{bek:5.1.3}--\ref{bek:5.1.5} from section
from the proof of part {\it (2')}.
Set
$$ \eca^{-l'^{,-}_B, \calL^-_b}(Z^-_B)_o=  \eca^{-l'^{,-}_B, \calL^-_b}(Z^-_B)\,\cap\,
 \eca^{-l'^{,-}_B}(Z^-_B)_o.$$
 Then similarly as in \ref{bek:5.1.4} one proves that
\begin{equation}\label{eq:RelAb6}
c_{rel,o}:  \eca^{-l'^{,-}_B, \calL^-_b}(Z^-_B)_o\ \longrightarrow \  r^{-1}(\calL^-_b)
\end{equation}
is dominant.

Note also that the space $\eca^{-l'^{,-}_B, \calL^-_b}(Z^-_B)_o$, by a similar
identification as in (\ref{eq:IDENTIFICATION}) (i.e., its relative version)
is isomorphic with a `relative $\eca$'--space, hence it is   is irreducible  for every  generic $o \in E_{v}$.
(This can also be proved by fixing an irreducible Zariski open set in it, cf. \cite{NNI,R} or \ref{bek:6.5}.)
Furthermore, by a similar argument as at the end of \ref{bek:6.5},  $\eca^{-l'^{,-}_B, \calL^-_b}(Z^-_B)_o$ is smooth
as well for any generic $o$.

\bekezdes Consider again the dominant relative Abel map $c_{rel}:\eca^{-l'^{,-}_B, \calL^-_b}(Z^-_B)\to
r^{-1}(\calL^-_b)$, cf. (\ref{eq:RelAb}) and \ref{bek:5.1.4}.
Let us denote   by $\eca^{-l'^{,-}_B, \calL^-_b}(Z^-_B)_{\reg}$
the Zariski open subset of $\eca^{-l'^{,-}_B, \calL^-_b}(Z^-_B)$ consisting of classes of divisors, which have smooth transversal cuts
along the exceptional divisor $E_v$ and also the tangent map of $c_{rel}$ is a submersion.
Moreover, set  $\eca^{-l'^{,-}_B, \calL^-_b}(Z^-_B)_{\reg,o} = \eca^{-l'^{,-}_B}(Z^-_B)_o
\cap \eca^{-l'^{,-}_B, \calL^-_b}(Z^-_B)_{\reg}$.

%Notice, that for a generic point $o \in E_v$ the space $\eca^{-l'^{,-}_B, \calL^-_b}_{reg}(Z^-_B)_o$ is irreducible and smooth.

We denote the restriction of the dominant  map $c_{rel,o}$ from (\ref{eq:RelAb6})
to $\eca^{-l'^{,-}_B, \calL^-_b}(Z^-_B)_{\reg,o}$ with the same symbol. Obviously
$c_{rel,o}: \eca^{-l'^{,-}_B, \calL^-_b}(Z^-_B)_{\reg,o} \to   r^{-1}(\calL^-_b)$ is dominant for generic $o \in E_v$.

Finally, we consider the
 incidence space $$ \frII = \{\,   (p, D) \in E_v \times \eca^{-l'^{,-}_B, \calL^-_b}(Z^-_B)_{\reg} \ :\  p \in |D|\, \}$$
 together with the two canonical projections
$\pi_1: \frII \to E_v$ and $\pi_2: \frII \to \eca^{-l'^{,-}_B, \calL^-_b}(Z^-_B)_{\reg}$, where $ \pi_1((p, D)) = p$ and $\pi_2((p, D)) = D$.
Note, that the map $\pi_2$ is finite and surjective, and for any generic point $o$ of the image of
$\pi_1$ one has $\pi_1^{-1}(o) = \eca^{-l'^{,-}_B, \calL^-_b}(Z^-_B)_{\reg,o}$.
%\marginpar{nem biztos hogy kell ez}
We can replace $\frII$ by a smaller Zariski open set of it, denoted by the same symbol $\frII$,
 such that for any point $o$ of the image of
$\pi_1$ one has $\pi_1^{-1}(o) = \eca^{-l'^{,-}_B, \calL^-_b}(Z^-_B)_{\reg,o}$. Note that
${\rm im}(\pi_1)$ is a Zariski open in $E_v$.

Consider next  the map $c_{rel} \circ \pi_2 : \frII
%\eca^{-l'^{,-}_B, \calL^-_b}(Z^-_B)_{\reg}
\to r^{-1}(\calL^-_b)$.

Since for a generic point $o$ the map $c_{rel,o}: \eca^{-l'^{,-}_B, \calL^-_b}(Z^-_B)_{\reg,o}  \to
 r^{-1}(\calL^-_b)$ is dominant, we get from the irreducibility
of $\eca^{-l'^{,-}_B, \calL^-_b}(Z^-_B)_{\reg,o}$ that for a generic point
$D \in \eca^{-l'^{,-}_B, \calL^-_b}(Z^-_B)_{\reg,o}$ the tangent map $T_Dc_{rel}:
T_D(\eca^{-l'^{,-}_B, \calL^-_b}(Z^-_B)_{\reg,o})\to T_{c_{rel}(D)}r^{-1}(\calL^-_b)$ is surjective.

Fix $D$ generic, $|D|\cap E_v=\{p_1, \ldots, p_d\}$, where $d=-(l', E_v)$. Then a neighbourhood
of $p_1$ of $\eca^{-l'^{,-}_B, \calL^-_b}(Z^-_B)_{\reg,p_1}$ embeds naturally into a neighbourhood
of $x:=(p_1,D)$ in $\frII$ as a one--codimensional subspace (such that $p_1$ belongs to the $\pi_1$--image of that neighbourhood).
In particular, $T_1:=T_D(\eca^{-l'^{,-}_B, \calL^-_b}(Z^-_B)_{\reg,p_1})$ embeds  into
$T_x\frII$ as a codimension one sub--vectorspace. Furthermore, the restriction of the tangent map
$T_x(c_{rel}\circ \pi_2)$ to $T_1$ is surjective.
If we denote the tangent space of the $\pi_2$--fiber
$(c_{rel}\circ \pi_2)^{-1}(c_{rel}(D))$  at $x$ by $T_2$, then
the last statement means that $T_1$ and $T_2$ are transversal in $T_x\frII$. Since $T_1$ has codimension one,
we get that $T_2\not\subset T_1$. Hence the $\pi_2$--fiber  $(c_{rel}\circ \pi_2)^{-1}(c_{rel}(D))$
cannot be contained in  $\eca^{-l'^{,-}_B, \calL^-_b}(Z^-_B)_{\reg,p_1}$.

The same is true for all the points $p_1, \ldots, p_d$.
Hence the line bundle $c_{rel}(D) \in r^{-1}(\calL_b^-)$ is base point free.

Since the map $c_{rel}$ is dominant, we obtain that
 {\it the generic bundle of $r^{-1}(\calL_b^-)$ has no base point}.

%the previous statement means  that
%for $x=(o,D)\in I$,  $o$ generic and $D$ generic in  $\eca^{-l'^{,-}_B, \calL^-_b}(Z^-_B)_{\reg,o}$,
%the tangent map $T_x(c_{rel}\circ \pi_2):T_xI\to T_Dr^{-1}(\calL^-_b)$ is surjective.
%This means that for  a generic point $x=(o,D)\in I$ the two fibers $\pi_1^{-1}(o)$ and $(c_{rel}\circ\pi_2)^{-1}(D)$
%intersect each other transversally at $x$.  and
%$T_D((c_{rel} \circ \pi_2)^{-1}(c_{rel} \circ \pi_2(D)))$ are transversal.
%
%Notice, that since $T_x(\eca^{-l'^{,-}_B, \calL^-_b}(Z^-_B)_{\reg,o})$
%is one--codimensional in $T_x(\eca^{-l'^{,-}_B, \calL^-_b}(Z^-_B))_{\reg}$ we get, that for a generic point
%$x \in I$, $T_x(c_{rel}^{-1}(c_{rel}(x)))$ is not contained in
%$T_x(\eca^{-l'^{,-}_B, \calL^-_b}_{reg}(Z^-_B)_{\pi_1(x)})$.
%
%Applying the finite map $\pi_2$ we get, that if we have a generic point $D \in \eca^{-l'^{,-}_B, \calL^-_b}_{reg}(Z^-_B)$
%with $|D| \cap E_v = p_1, ..., p_d$, $d = -(l',E_v)$,
%then $T_D(c_{rel}^{-1}(c_{rel}(D)))$ is not contained in $T_D(\eca^{-l'^{,-}_B,
%\calL^-_b}_{reg}(Z^-_B)_{p_i})$ for each $1 \leq i \leq d$.
%
%This means, that for a generic point $D \in \eca^{-l'^{,-}_B, \calL^-_b}_{reg}(Z^-_B)$, with $|D|
%\cap E_v = p_1, ..., p_d$ we know, that $c_{rel}^{-1}(c_{rel}(D))$
%is not cintained in $\eca^{-l'^{,-}_B, \calL^-_b}_{reg}(Z^-_B)_{p_i}$ for each $1 \leq i \leq d$, so the
%line bundle $c_{rel}(D) \in r^{-1}(\calL_B^-)$ is base point free.
%
%Since the map $c_{rel}$ is dominant, we get the following: {\it the generic bundle of $r^{-1}(\calL_B^-)$ has no base point}.

 \bekezdes \label{bek:6.8} Hence, we proved that there exists a Zariski open set
 $U_{\pic, t}\subset U_{\pic}\subset r^{-1}(\calL^-_B)$ such that its elements have no base points.
 Then we continue as in   parts \ref{bek:5.1.7}--\ref{bek:5.1.9} in the proof of part {\it (2')}:
 by a very same type of deformation we can move $\calL_B^-$ into $U_{\pic, t}$.
 Finally, we end the proof with similar argument as \ref{bek:5.1.10}. This ends the proof of
 part {\it (3')}.

For Part {\it (4')} notice, that if ($*_v$) holds, then by Remark \ref{rem:*}(a)
%we have $\dim \left( \frac{H^0(\calO_{\tX}(-l'_{top}))}{H^0(\calO_{\tX}(-l'_{top}- E_v))} \right) = 1$,
%which means, that
$\dim  \im\big( H^0(\calO_{\tX}(-l'_{top})) \to H^0(\calO_{E_v}(-l'_{top}) )\big) = 1$,
hence  the line bundle $\calO_{\tX}(-l'_{top})$ necessarily
has base points on $E_v$.
Furthermore, by part {\it (2')} we know, that there is a section in $H^0(\calO_{\tX}(-l'_{top})) $ whose divisor consists of
$-(l'_{top}, E_v)$ disjoint smooth transversal cuts.

In particular, the line bundle $\calO_{\tX}(-l'_{top})$ has
 $-(l'_{top}, E_v)$ disjoint base points on $E_v$, all of them regular points of $E$.
This proves part {\it (4')}.

\section{Preliminaries for the proof of Theorem \ref{th:NEW2}{\it (5')}}\label{s:Prel5}

\subsection{Laufer's Duality}\label{ss:LauferD}

Let us fix a good resolution $\tX\to X$  as above. We start with the well--known perfect pairing
(cf. \cite{Laufer72,Laufer77,NNI})
\begin{equation}\label{eq:LD}
\langle\,,\,\rangle :H^1(\tX,\cO_{\tX})\otimes  \big(
H^0(\tX\setminus E,\Omega^2_{\tX})/ H^0(\tX,\Omega^2_{\tX})\big) \longrightarrow\ \C.\end{equation}
Here  $H^0(\tX\setminus E,\Omega^2_{\tX})$ can be replaced by
$H^0(\tX,\Omega^2_{\tX}(Z))$ for $Z\gg 0$ (e.g. for any $Z$ with $Z\geq \lfloor Z_K \rfloor$),
cf. \cite[7.1.3]{NNI},  and one also has  $H^1(Z,\cO_{Z})\simeq H^1(\tX,\cO_{\tX})$. Hence we get a perfect pairing
\begin{equation}\label{eq:LD2}
\langle\,,\,\rangle:  H^1(Z,\cO_{Z})\otimes
\big(H^0(\tX,\Omega^2_{\tX}(Z))/ H^0(\tX,\Omega^2_{\tX})\big)\longrightarrow\ \C.\end{equation}
In particular, a basis $[\omega_1],\ldots, [\omega_{p_g}]$ of $H^0(\Omega^2_{\tX}(Z))/H^0(\Omega^2_{\tX})$
provides $p_g$ affine   coordinates in $H^1(Z,\calO_Z)$. These dualities are given by integrations.
The integration formula  can be lifted from the level of line bundles generated by divisors to the level of the space of
Cartier divisors, cf.  \cite[\S 7]{NNI}. This will be reviewed next.

\bekezdes {\bf The Laufer integration.} \label{bek:transport}
Consider the following situation.
We fix a smooth point $p$ on $E$, a local bidisc $B\ni p$ with local coordinates $(x,y)$ such that $B\cap E=\{x=0\}$.
We assume that a certain form   $\omega\in H^0(\tX,\Omega^2_{\tX}(Z))$ has  local equation
$\omega=\sum_{ i\in\Z,j\geq 0}a_{i,j} x^iy^jdx\wedge dy$ in $B$.
%$\alpha/v^o$ in $B$, where
%$o\in Z_{>0}$, $\alpha\in H^0(B,\Omega^2)$ and  $\alpha(p)\not=0$.

In the same time,
we fix a divisor $\widetilde{D}$ on $\tX$, whose unique component $\widetilde{D}_1$
in $B$ has local equation $y^n$, $n\geq 1$.
 Let $\widetilde{D}_t$ be another divisor, which
is the same as $\widetilde{D}$ in the complement of $B$ and its component $\widetilde{D}_{1,t}$
in $B$ has  local equation
%$(y+td_1(x,y)+t^2d_2(x,y)+\cdots)^n=
$(y+td(t,x,y))^n$.

Next  we identify
$H^1(\tX, \calO_{\tX})$ with $\pic^0(\tX)$
by the exponential map and we consider the composition
$t\mapsto \widetilde{D}_t-\widetilde{D}
\mapsto \calO_{\tX}(\widetilde{D}_t-\widetilde{D})
\mapsto \exp^{-1} \calO_{\tX}(\widetilde{D}_t-\widetilde{D})
\mapsto \langle\exp^{-1} \cO_{\tX}(\widetilde{D}_t-\widetilde{D}),\omega\rangle$.
%where $\langle\,,\,\rangle$ denotes the duality (\ref{eq:LD})
The next  formula makes this expression  explicit.
(Here  $B=\{|x|,\, |y|<\epsilon\}$ for a small $\epsilon$, and $|t|\ll \epsilon$.)
\begin{equation}\label{eq:Tomega11}
\langle\langle \widetilde{D}_t,\omega\rangle\rangle:= \langle\exp^{-1} \cO_{\tX}(\widetilde{D}_t-\widetilde{D}),\omega\rangle=
n\int_{|x|=\epsilon \atop |y|=\epsilon} \log\Big(1+t\frac{d(t,x,y)}{y}\Big)\cdot
 \sum_{ i\in\Z,j\geq 0}a_{i,j} x^iy^jdx\wedge dy.\end{equation}
This restricted to any cycle $Z\gg0 $ can be reinterpreted as `$\omega$--coordinate'
of the Abel map
restricted to the path $t\mapsto D_t:=\widetilde{D}_t|_Z$
(and shifted by the image of $D:=\widetilde{D}|_Z$).
If $\omega$ has no pole along the divisor $\{x=0\}$ then
$\langle\langle \widetilde{D}_t,\omega\rangle\rangle=0$
for any path $\widetilde{D}_t$.
Furthermore, the tangent application of the above  composition is
  the `$\omega$--coordinate' of the tangent application of the Abel map restricted to $D_t$.

E.g., if $\widetilde{D}_{1,t}$ is given by $(y+tx^{o-1})^n$  for some $o\geq 1$
and $\omega=\sum_{ i\in\Z,j\geq 0}a_{i,j} x^iy^jdx\wedge dy$,
%and $D$ denoted the restriction of $\widetilde{D}$ to $Z$,
 then
%at the level of tangent application on has the following expression
%for $(T_{\widetilde{c}(D)}\omega)\circ T_D\widetilde{c})(\frac{d}{dt} D_t|_{t=0})$:
\begin{equation}\label{eq:Tomega}\begin{split}
\frac{d}{dt}\Big|_{t=0}\,\langle\langle \widetilde{D}_t,\omega\rangle\rangle=
\frac{d}{dt}\Big|_{t=0}\, \Big[n\int_{|x|=\epsilon \atop |y|=\epsilon} \log\Big(1+t\frac{x^{o-1}}{y}\Big)\cdot
\omega \, \Big]
= -4\pi^2\cdot  n\cdot a_{-o,0}\cdot
% \int_{|u|=\epsilon, \ |v|=\epsilon}\frac{du\wedge dv}{uv}.
\end{split}
\end{equation}
If more components of $\widetilde{D}$ are perturbed then
$\frac{d}{dt}\big|_{t=0}\,\langle\langle \widetilde{D}_t,\omega\rangle\rangle$ is the  sum of such contributions.

\begin{definition}\label{not:residue}
Consider the above situation  and assume that
$\widetilde{D}_1$ has  local equation $y$ (i.e. $n=1$ in \ref{bek:transport}).
 Then, by definition,   the {\it Leray residue }
of $\omega$ along $\widetilde{D}_1$ is  the 1--form (with possible poles at $\widetilde{D}_1\cap E$) defined by
$(\omega/dy)|_{y=0}=\sum_{i\in \Z} a_{i,0}x^i dx$. We denote it by ${\rm Res}_{\widetilde{D}_1}(\omega)$.
\end{definition}
Note that
the right hand side of (\ref{eq:Tomega}) tests exactly the non--regular part  of %the Leray residue
 ${\rm Res}_{\widetilde{D}_1}(\omega)$.

\bekezdes\label{ss:TA}  {\bf The tangent of the Abel map.} Fix any integral cycle $Z\in L$, $Z\geq E$.
 Consider again $l'\in \calS'$ and a divisor
  $D \in \eca^{-l'}(Z) $, which is a union of $-(l', E)$ disjoint divisors  $\{D_i\}_i $,
 each of them $\cO_Z$--reduction of divisors $\{\widetilde{D}_i\}_i $ from  $\eca^{-l'}(\tX)$
 intersecting  $E$  transversally.
Set  $\widetilde{D}=\cup_i\widetilde{D}_i $.  % and  $\calL:=\widetilde{c}^{l'}(D)\in H^1(Z,\calO_Z)$.
 % and write $Z=\sum_vm_vE_v$.

Note that the duality (\ref{eq:LD2}) is true for any such $Z$. It  is enhanced by the following statement.

We introduce a subsheaf $\Omega_{\tX}^2(Z)^{{\rm regRes}_{\widetilde{D}}}$
of  $\Omega_{\tX}^2(Z)$ consisting of those forms $\omega$,
which have the property that for every  $i$ the residue ${\rm Res}_{\widetilde{D}_i}(\omega)$
has no pole along $\widetilde{D}_i$.
For more see \cite[10.1]{NNI}.
%This means that the restrictions of  $\Omega_{\tX}^2(Z)^{{\rm regRes}_{\widetilde{D}}}$ and    $\Omega_{\tX}^2(Z)$
%on the complement of the support of $\widetilde{D}$ coincide, however along $\widetilde{D}$
%is satisfies the following requirement.
%If $p=E\cap \widetilde{D}_i=E_{v_i}\cap \widetilde{D}_i$ has  local coordinates
%$(u,v)$ with $\{u=0\}=E$ and  $\widetilde{D}_i$ with  local equation $v$,
%then a local section of $\Omega_{\tX}^2(Z)$      near $p$
%has the form $\omega=\sum_{i\geq -m_{v_i}, j\geq 0} a_{i,j}u^i v^j du\wedge dv$. Then
% the residue ${\rm Res}_{\widetilde{D}_i}(\omega)$ is
%$(\omega/dv)|_{v=0}=\sum_i a_{i,0}u^idu$, hence the pole--vanishing reads as $a_{i,0}=0$ for all $i<0$.
%Note that $\Omega_{\tX}^2(Z-\widetilde{D}) $ and the sheaf of regular forms  $\Omega_{\tX}^2$ are
%subsheaves of $\Omega_{\tX}^2(Z)^{{\rm regRes}_{\widetilde{D}}}$.

\begin{theorem}\label{th:Formsres} \cite[Th. 10.1.1]{NNI}
In  the above situation  one has the following facts.

(a) The sheaves $\Omega_{\tX}^2(Z)^{{\rm regRes}_{\widetilde{D}}}/\Omega_{\tX}^2$ and $\calO_{Z}(K_{\tX}+Z-D)$ are isomorphic.

(b) $H^0(\tX,\Omega_{\tX}^2(Z)^{{\rm regRes}_{\widetilde{D}}})/
H^0(\tX,\Omega_{\tX}^2)\simeq H^0(Z,\calO_Z(K_{\tX}+Z-D))\simeq H^1(Z,\calO_Z(D))^*$.
%(The left hand side can be regarded as a subspace of
%$H^0(\tX,\Omega^2_{\tX}(Z))/H^0(\tX,\Omega_{\tX}^2)\simeq H^1(Z,\calO_Z)^*$.)

(c) The image
%$T_D\widetilde{c}^{l'} (T_{D} \eca^{l'}(Z))$
of the tangent map
%$T_D\widetilde{c}^{l'}$
at $D$ of
%$\widetilde{c}^{l'}:
$\eca^{-l'}(Z)\to \pic^{-l'}(Z)$,
after an identification of $T_{c^{-l'}(D)}\pic^{-l'}(Z)$ with $\pic^0(Z)=H^1(Z,\calO_Z)$,
%H^1(Z,\calO_Z)=  H^1(\tX,\calO_{\tX})$
is the intersection of kernels of
linear maps $\langle \cdot, \, [\omega]\rangle:  H^1(Z,\calO_Z)\to\bC$, where $\omega$ runs in
$H^0(\tX,\Omega_{\tX}^2(Z)^{{\rm regRes}_{\widetilde{D}}})$.
\end{theorem}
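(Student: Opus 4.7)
The plan is to handle (a) by a local model computation, derive (b) from (a) using cohomology and Serre duality on $Z$, and obtain (c) by combining the explicit Laufer integration formula with a dimension count.

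For (a), the quotient sheaf is supported on $Z$, so I work in local coordinates at smooth points of $E$. Away from $\widetilde{D}$ the regRes condition is vacuous and the identification $\Omega^2_{\tX}(Z)/\Omega^2_{\tX} \simeq \calO_Z(K_{\tX}+Z)$ is the standard one. At a point $p \in \widetilde{D}_i \cap E$, choose $(x,y)$ with $E=\{x=0\}$, $\widetilde{D}_i=\{y=0\}$, and write $m$ for the multiplicity of $Z$ along $E$ at $p$. A form $\omega = \sum a_{ij} x^i y^j\, dx\wedge dy$ lies in $\Omega^2_{\tX}(Z)$ iff $a_{ij}=0$ for $i<-m$; the regRes condition additionally imposes $a_{i,0}=0$ for $-m\le i<0$, i.e., the class of $\omega$ modulo $\Omega^2_{\tX}$ is divisible by $y$. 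This identifies $\Omega^2_{\tX}(Z)^{{\rm regRes}_{\widetilde{D}}}/\Omega^2_{\tX}$ with $y\cdot \calO_Z(K_{\tX}+Z) = \calO_Z(K_{\tX}+Z-D)$.

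For (b), apply the long exact sequence in cohomology to the short exact sequence produced by (a). The first isomorphism reduces to showing that the connecting map $H^0(\calO_Z(K_{\tX}+Z-D))\to H^1(\Omega^2_{\tX})$ vanishes; this follows by comparison with the analogous sequence for $\Omega^2_{\tX}(Z)$, where the corresponding vanishing is the content of the Laufer duality (\ref{eq:LD2}) once Serre duality on $Z$ identifies $H^0(\calO_Z(K_{\tX}+Z))$ with $H^1(\calO_Z)^*$. The second isomorphism is Serre duality on the Gorenstein subscheme $Z\subset\tX$ whose dualizing sheaf is $\omega_Z = \calO_Z(K_{\tX}+Z)$.

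For (c), a tangent vector at $D$ given by a deformation $\widetilde{D}_t$ pairs with $[\omega]\in H^0(\Omega^2_{\tX}(Z))/H^0(\Omega^2_{\tX})$ via $\frac{d}{dt}|_{t=0}\langle\langle\widetilde{D}_t,\omega\rangle\rangle$. Specializing the Laufer integration formula (\ref{eq:Tomega11}) around each intersection $\widetilde{D}_i\cap E$, the derivative formula (\ref{eq:Tomega}) shows that only the non-regular coefficients of ${\rm Res}_{\widetilde{D}_i}(\omega)$ contribute. Hence every $\omega\in H^0(\Omega^2_{\tX}(Z)^{{\rm regRes}_{\widetilde{D}}})$ pairs trivially with the image of $T_D c^{-l'}(Z)$, giving one inclusion. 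For equality, I compare dimensions: by smoothness of $\eca^{-l'}(Z)$ and (\ref{eq:dimfiber}) the image has dimension $h^1(\calO_Z)-h^1(Z,\calO_Z(D))$, while by (b) the joint kernel of the functionals $\langle\cdot,[\omega]\rangle$ has codimension $h^0(\calO_Z(K_{\tX}+Z-D)) = h^1(Z,\calO_Z(D))$ in $H^1(Z,\calO_Z)$; these match, forcing equality. The delicate point is ensuring the tangent map realizes its full expected rank at the specific divisor $D$ (not merely at a generic point of $\eca^{-l'}(Z)$); this requires exploiting the smoothness and irreducibility of $\eca^{-l'}(Z)$ together with the exactness (not just semicontinuity) of the fiber-dimension formula (\ref{eq:dimfiber}).
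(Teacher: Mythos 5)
This theorem is not proved in the paper at all: it is quoted verbatim from \cite[Th.\ 10.1.1]{NNI}, so the comparison is with the argument given there. Your parts (a) and (b) are essentially fine: the local computation at the points $\widetilde D_i\cap E$ (with $n=1$, since each $\widetilde D_i$ is a transversal cut) does identify the quotient with $\calO_Z(K_{\tX}+Z-D)$, and for (b) your detour through the sequence for $\Omega^2_{\tX}(Z)$ and (\ref{eq:LD2}) works, although it is simpler to invoke $H^1(\tX,\Omega^2_{\tX})=0$ (Grauert--Riemenschneider plus Leray over the Stein base), which kills the connecting map at once.

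The genuine gap is in the reverse inclusion of (c). Your forward inclusion via (\ref{eq:Tomega11}) is correct, but the dimension count does not close the argument, and the ``delicate point'' you flag is not repaired by the tools you cite. Smoothness of $\eca^{-l'}(Z)$ together with (\ref{eq:dimfiber}) only says that $\ker T_Dc^{-l'}(Z)$ contains the tangent space of the fiber $c^{-1}(\calO_Z(D))$, i.e.\ it gives the \emph{upper} bound $\rank T_Dc^{-l'}(Z)\le h^1(\calO_Z)-h^1(Z,\calO_Z(D))$ --- the same bound you already have from the inclusion into the common kernel. What is needed is the \emph{lower} bound at the specific divisor $D$, and neither smoothness and irreducibility of the source nor the exactness of the fiber-dimension formula forces the differential to attain the expected rank at a given point: knowing the dimension of every fiber (or of the local image) pins down only the generic rank, while the differential can still drop rank at special points (compare $t\mapsto (t^2,t^3)$, whose image is a curve but whose differential vanishes at $0$). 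The way this is actually settled in \cite{NNI} is cohomological rather than by counting: one identifies $T_D\eca^{-l'}(Z)\simeq H^0(Z,\calO_D(D))$ and the differential of the Abel map with the connecting homomorphism of $0\to\calO_Z\to\calO_Z(D)\to\calO_D(D)\to 0$; then $\im T_Dc^{-l'}(Z)=\ker\big(H^1(Z,\calO_Z)\to H^1(Z,\calO_Z(D))\big)$ on the nose, of dimension exactly $h^1(\calO_Z)-h^1(\calO_Z(D))$ since $\calO_D(D)$ is supported on points, and Serre duality combined with your (a)--(b) (and the compatibility of the Laufer pairing with Serre duality, which is what (\ref{eq:Tomega11})--(\ref{eq:Tomega}) encode) identifies this kernel with $\bigcap_{\omega}\ker\langle\cdot,[\omega]\rangle$. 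With that identification your dimension comparison becomes an equality automatically, and no genericity or rank argument at $D$ is needed.
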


\subsection{Characterization of base points of line bundles using differential forms}\label{ss:BaseLB}
Next we formulate a characterization of the existence of base points for
restricted natural line bundles of {\it generic singularities}.
% (since in this context we will apply it),
%however parts of it (under certain assumptions) are valid for arbitrary singularities as well.

Fix a generic pair $\tX\subset \tX_{top}$, $Z\gg 0$, $Z\in L$ as above and a line bundle
$\calL:=\calO_Z(-l'_{top})$, $-l'=c_1(\calL)\in L'$.
We assume that $\calL$ has a section $s\in H^0(Z, \calL)$ without fixed components
such that its divisor $D:={\rm div}(s)$ is the restriction to $Z$  of a reduced  smooth divisor $\widetilde{D}$ of $\tX$,
{\it which meets $E$ transversally}. Write $\cup_i\widetilde{D}_i$ for the irreducible components of $\widetilde{D}$,
and set $p:=E\cap \widetilde{D}_1=E_v\cap \widetilde{D}_1$.

Let $b:\tX^{new}\to \tX$ be the blow up of $\tX$ at $p$  and set $E^{new}=b^{-1}(p)$.
Then $b^*\calL(-E^{new})\in \pic^{-b^*(l')-E^{new}}$ has no fixed components, and the divisor $D_p$ of
$b^*s\in H^0( b^*\calL(-E^{new})|_{b^*Z-E^{new}})$ is the
restriction of a smooth divisor  $\cup_{i>1}\widetilde{D}_i\cup \widetilde{D}_p$
(the strict transform of $\widetilde{D}$)
of $\tX^{new}$, which intersects the exceptional curve transversally.

Note that by  Theorem \ref{th:B}   $c^{-l'}(Z)$
is dominant (equivalently,  $T_Dc^{-l'}(Z) $ is surjective),
by Theorem \ref{th:dominant} $\min_{0<l\leq Z}\chi(l'+l)> \chi(l')$,   and using either Theorem \ref{th:dominant}
or Theorem \ref{th:Formsres}
\begin{equation}\label{eq:h^1=0}
h^1(Z, \calL)=0.
\end{equation}
\begin{proposition}\label{prop:BASE}
The following facts are equivalent:

(a) \ $p=E_v\cap \widetilde{D}_1$ \,is a base point of $\calL$;

(b) \ property ($*_v$) for $l'$ and $E_v$: \ $\min_{l\geq E_v, \ l\in L}\{\chi(l'+l)\}=\chi(l')+1$;

(c) \  $h^1(b^*Z-E^{new},  b^*\calL(-E^{new})|_{b^*Z-E^{new}})=1$;

(d) \ ${\rm codim} ({\rm im}( c^{-b^*l'-E^{new}}))=1$;

(e) \  there exists a form $\omega_p\in H^0(\tX^{new}\setminus E\cup E^{new},
\Omega^2_{\tX^{new}})$, with a nontrivial pole,
 such that its Leray residues along $\cup_{i>1}\widetilde{D}_i$ and $\widetilde{D}_p$
are zero.

If (a)--(e) are satisfied then  the form
$\omega_p$ is unique modulo forms without poles and up to multiplication by  a non--zero constant. Moreover,
 $\ker\, \langle \cdot, \, [\omega_p]\rangle= {\rm im}\, T_{D_p}c^{-b^*l'-E^{new}}$.

(For a reformulation of the properties of $\omega_p$  in terms of a form $\omega\in H^0(\tX\setminus E, \Omega^2_{\tX})$ see
\ref{bek:Omega}.)
\end{proposition}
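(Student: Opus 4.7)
The plan is to establish the five-fold equivalence via the chain (b)$\Leftrightarrow$(a)$\Leftrightarrow$(c)$\Leftrightarrow$(d)$\Leftrightarrow$(e), combining three inputs: Lemma~\ref{lem:BASE} (comparing cohomology on $\tX$ and on the blow up at $p$), Remark~\ref{rem:*}(a) (identifying $(*_v)$ with $\dim V_v=1$), and Theorem~\ref{th:Formsres} (the Laufer--duality trio relating $h^1$, the codimension of the Abel image, and spaces of forms with prescribed Leray residues). The fulcrum is the vanishing $h^1(Z,\calL)=0$ stated in (\ref{eq:h^1=0}), which pins the integer $h^1(b^*Z-E^{new},b^*\calL(-E^{new}))$ to either $0$ or $1$ and thereby aligns all five statements.

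For (a)$\Leftrightarrow$(c), if $p$ is a base point Lemma~\ref{lem:BASE}(c) gives directly $h^1(b^*Z-E^{new},b^*\calL(-E^{new}))=h^1(Z,\calL)+1=1$; otherwise the sequence $0\to b^*\calL(-E^{new})\to b^*\calL\to\calO_{E^{new}}\to 0$ (quotient trivial on $E^{new}\simeq\bP^1$ since $(c_1(b^*\calL),E^{new})=0$) has a surjective evaluation at $p$ coming from a section of $\calL$ nonvanishing there, whence $h^1(b^*\calL(-E^{new}))=h^1(\calL)=0$. For (a)$\Leftrightarrow$(b), Remark~\ref{rem:*}(a) identifies $(*_v)$ with $\dim V_v=1$; by transversality of $\widetilde{D}$ with $E$, $s|_{E_v}$ has $-(l',E_v)$ simple zeros including $p$, so if $\dim V_v=1$ every global section restricts on $E_v$ to a scalar multiple of $s|_{E_v}$ and hence vanishes at $p$, while if $\dim V_v\geq 2$ Remark~\ref{rem:*}(a) yields two sections without common zero on $E_v$, a generic combination of which is nonzero at $p$.

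The remaining link (c)$\Leftrightarrow$(d)$\Leftrightarrow$(e), together with the uniqueness of $\omega_p$ and the kernel identification, I would extract from Theorem~\ref{th:Formsres} applied on $\tX^{new}$ with the smooth transversal divisor $\widetilde{D}^{new}=\bigcup_{i>1}\widetilde{D}_i\cup\widetilde{D}_p$. Parts (a)--(b) give
\[
\dim\bigl(H^0(\tX^{new},\Omega^2_{\tX^{new}}(b^*Z-E^{new})^{{\rm regRes}_{\widetilde{D}^{new}}})/H^0(\tX^{new},\Omega^2_{\tX^{new}})\bigr)=h^1(b^*Z-E^{new},b^*\calL(-E^{new})),
\]
and part (c) identifies this dimension with the codimension of the image of $T_{D_p}c^{-b^*l'-E^{new}}$, equal to ${\rm codim}({\rm im}(c^{-b^*l'-E^{new}}))$ at the smooth point $D_p$ of $\eca$. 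This yields (c)$\Leftrightarrow$(d). Condition (e) asserts nonvanishing of the same quotient, and (c) forces its dimension to be exactly one, so $\omega_p$ is unique modulo $H^0(\Omega^2_{\tX^{new}})$ and up to a scalar; the identity $\ker\langle\cdot,[\omega_p]\rangle={\rm im}\,T_{D_p}c$ is then a direct rereading of Theorem~\ref{th:Formsres}(c) once the space of admissible forms is one-dimensional. The main obstacle is that $\tX^{new}$, obtained by blowing up at a base point, is no longer generic in the Laufer sense, so Theorem~\ref{th:B} and the formulae of Theorem~\ref{th:OLD} cannot be applied at the level of $\tX^{new}$; the plan is to keep all cohomological bookkeeping on $\tX$ (where genericity applies) and transport to $\tX^{new}$ only through Lemma~\ref{lem:BASE} and Theorem~\ref{th:Formsres}, both valid without any genericity hypothesis on the resolution.
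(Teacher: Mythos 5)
Your handling of (a)$\Leftrightarrow$(c) via Lemma \ref{lem:BASE} and the vanishing (\ref{eq:h^1=0}), and of (e), the uniqueness of $\omega_p$ and the kernel identification via Theorem \ref{th:Formsres}, is in line with the paper. One caveat on (a)$\Leftrightarrow$(b): the claim ``if $\dim V_v\geq 2$ then there exist two sections with no common zero along $E_v$'' is not an elementary consequence of Remark \ref{rem:*}(a) — a $2$--dimensional subspace of $H^0(\bP^1,\calO(d))$ can very well have a base point — it is precisely part {\it (3')} of Theorem \ref{th:NEW2}. Using it is legitimate (the paper itself derives (a)$\Leftrightarrow$(b) from the already--proved parts {\it (1')--(4')}), but it should be cited as that theorem, not as the remark.

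The genuine gap is in (c)$\Leftrightarrow$(d). You assert that the codimension of ${\rm im}\,T_{D_p}c^{-b^*l'-E^{new}}$ equals ${\rm codim}({\rm im}(c^{-b^*l'-E^{new}}))$ ``at the smooth point $D_p$ of $\eca$''. Smoothness of the source does not give this: the rank of the differential can drop at special points (already $t\mapsto t^2$ at $t=0$), and $D_p$ is exactly such a special, non--generic divisor. What holds for free is only one inequality, ${\rm rank}\,T_{D_p}c\leq\dim{\rm im}(c)$ (a surjective differential would make $c$ locally surjective), which together with Theorem \ref{th:Formsres} and (c) yields ${\rm codim}({\rm im}\,c)\leq h^1=1$; but nothing in your argument shows ${\rm codim}({\rm im}\,c)\geq 1$, i.e.\ that $c^{-b^*l'-E^{new}}$ is \emph{not dominant}, and this non--dominance is the entire content of (c)$\Rightarrow$(d). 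You cannot invoke Theorem \ref{th:B} here, since $\tX^{new}$ is not generic, as you note yourself. The paper closes exactly this point by a topological argument valid for any analytic structure: if $l_0$ realizes the minimum in ($*_v$), then with $\widetilde l':=b^*l'+E^{new}$ and $b^*l_0=E^{new}+\widetilde l_0$ one computes $\chi(\widetilde l'+\widetilde l_0)=\chi(\widetilde l')$, so Theorem \ref{th:dominant} gives non--dominance of $c^{-\widetilde l'}$, and then Theorem \ref{th:dominant2} applied to $b^*\calL(-E^{new})\in{\rm im}(c^{-\widetilde l'})$ together with (c) forces the codimension to be one; the converse (d)$\Rightarrow$(c) likewise uses Theorem \ref{th:dominant2} for $h^1\geq 1$ and the exact sequence $b^*\calL(-E^{new})\hookrightarrow b^*\calL$ for $h^1\leq 1$, rather than the asserted equality of ranks. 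Once (c)$\Leftrightarrow$(d) is repaired this way, your route to (e) via Theorem \ref{th:Formsres} (which only needs the non--surjectivity of $T_{D_p}$, i.e.\ the safe direction) goes through as in the paper.
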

\begin{proof}
$(a)\Leftrightarrow(b)$ follows from parts {\it (1')--(4')} of Theorem \ref{th:NEW2} already proved.
For $(a)\Leftrightarrow(c)$ use  (\ref{eq:h^1=0}) and (the proof of ) Lemma \ref{lem:BASE}.
Next we prove $(c)\Rightarrow(d)$.

Assume that in {\it (b)} the minimum is realized for $l_0$.
Since $l_0\geq E_v$ we have $b^*(l_0)=E^{new}+\widetilde{l}_0$ for some $\widetilde{l}_0\geq E_v$.
Hence, with the abbreviation $\widetilde{l}':= b^*(l')+E^{new}$, via {\it (b)},  we have
$\chi(\widetilde{l}'+\widetilde{l}_0)=\chi(\widetilde{l}')$. Since $\widetilde{l}'\in \calS'(\tX^{new})$,
Theorem \ref{th:dominant} implies that $c^{-b^*l'-E^{new}}$ is not dominant. This fact together with
{\it (c)}  and Theorem \ref{th:dominant2} imply {\it (d)}.

$(d)\Rightarrow(c)$. Let $h^1$ be the left hand side of {\it (c)}. From Theorem \ref{th:dominant2}
$h^1\geq 1$. But $h^1(b^*\calL)=h^1(\calL)=0$, cf.
\ref{eq:h^1=0}),  hence $h^1\leq 1$ from the exact sequence associated with $b^*\calL(-E^{new})\hookrightarrow b^*\calL$.

%Next, we start the construction of $\omega_p$, i.e.,
{\it (d)}$\Leftrightarrow${\it (e)}.
Note that (similarly as in \ref{bek:6.5}) the germ $\big(\eca^{-b^*l'-E^{new}}(b^*Z-E^{new}), D_p\big)$ can be regarded
as a smooth codimension one subgerm of  $(\eca^{-l'}(Z),D)$, and the composition $C^{-l'}$
$$\big(\eca^{-b^*l'-E^{new}}(b^*Z-E^{new}), D_p\big)\hookrightarrow (\eca^{-l'}(Z),D)\stackrel {c^{-l'}}{\longrightarrow}
(\pic^{-l'}(Z),\calL)$$
identifies with $c^{-b^*l'-E^{new}}: \big(\eca^{-b^*l'-E^{new}}(b^*Z-E^{new}), D_p\big)\to
\big(\pic^{-b^*l'-E^{new}}(b^*Z-E^{new}),\calL\big)$.

Note that $T_{D_p}C^{-l'}$ cannot be surjective, since in that case $C^{-l'}$ would be a local submersion, hence
locally surjective, a fact which would contradict {\it (d)}. This shows that
${\rm im}\, T_{D_p}c^{-b^*l'-E^{new}}$ in $T_{\calL}\pic^{-b^*l'-E^{new}}(b^*Z-E^{new})$
%\simeq T_{\calL}\pic^{l'}(Z)$
has codimension one. Hence {\it (e)} (and the statement after it) follows from
Theorem \ref{th:Formsres}{\it (c)}.
Conversely,
if such a form exists, then by Theorem \ref{th:Formsres}{\it (b)--(c)} $h^1\geq 1$.
Then continue as in the proof of {\it (d)}$\Rightarrow${\it (c)} to conclude $h^1=1$, i.e. the validity of {\it (c)}.
%
% $T_{D_p}c^{-b^*l'-E^{new}}$ is not surjective,
% hence $ c^{-b^*l'-E^{new}}$ cannot be dominant and {\it (d)} follows from
%the previous discussion.
% there exists a form $\omega_p\in H^0(\tX^{new}\setminus E\cup E^{new},
%\Omega^2_{\tX^{new}})$, with nontrivial pole,
% such that its Leray residues along $\cup_{i>1}\widetilde{D}_i$ and $\widetilde{D}_p$
%are zero and  $\ker\, \langle \cdot, \, [\omega_p]\rangle= {\rm im}\, T_{D_p}c^{-b^*l'-E^{new}}$.
\end{proof}
\bekezdes {\bf $\omega_p$ replaced by $\omega$.}\label{bek:Omega}
Since the restriction of $b:\tX^{new}\setminus E\cup E^{new}\to \tX\setminus E$ is an isomorphism, $\omega_p=b^*\omega$
for some $\omega\in H^0(\tX\setminus E, \Omega^2_{\tX})$. Clearly, $\omega$ has no Leray residue along the components of
$\cup_{i>1}\widetilde{D}_i$.
We claim that $\omega $ has a nontrivial pole along $E_v$. Indeed, otherwise would be nonzero in
$H^0(\tX,\Omega_{\tX}^2(Z)^{{\rm regRes}_{\widetilde{D}}})/
H^0(\tX,\Omega_{\tX}^2)\simeq H^1(Z,\calL)^*$, a fact which contradicts
(\ref{eq:h^1=0}).
Next we analyse its  local properties near $p=\widetilde{D}_1\cap E_v\in\tX$.  Let us fix some local coordinates $(x,y)$ of $(\tX,p)$ such that
$E_v$ is $\{x=0\}$ and $\widetilde{D}_1$ is $\{y=0\}$. Assume that  $\omega $ has the form
$(\varphi_0(x)+y\varphi_1(x)+\cdots)dx\wedge dy/x^o$ for some $o\geq 1$ and $\varphi_0(0)+y\varphi_1(0)+\cdots\not=0 $.
 Then by the blow up
$x=\alpha$, $y=\alpha \beta$   the residue along $\widetilde{D}_p=\{\beta=0\}$ is $\varphi_0(\alpha)d\alpha/\alpha ^{o-1}$.
In particular, $x^{o-1}|\varphi_0(x)$.  Hence $\omega $ near $p$ has the form
$$\omega =\big( x^{o-1} \widetilde{\varphi}_0(x)+y\widetilde{\varphi}_1(x,y)\big) \cdot dx\wedge dy / x^o, \ \ \mbox{where} \ \
x\not| \,  x^{o-1} \widetilde{\varphi}_0(x)+y\widetilde{\varphi}_1(x,y), \ \ o\geq 1.$$
Assume that $\widetilde{\varphi}_0(0)=0$. This would imply that ${\rm Res}_{\widetilde{D}_1}\omega=0$.
Since all the other Leray residues are zero, this fact together with (\ref{eq:h^1=0})  and
Theorem \ref{th:Formsres}{\it (b)} would imply that $\omega$ has no pole along $E$, but this is not the case
since $\omega _p$ has a nontrivial pole. Therefore $\widetilde{\varphi}_0(0)\not =0$.

Consider next an arbitrary deformation $\widetilde{D}_{1,t}=\{y+td(t,x,y)=0\}$ of $\widetilde{D}_1=\{y=0\}$, and also
arbitrary deformations  $\widetilde{D}_{i,t}$ of $\widetilde{D}_i$ for $i>1$. Then in $\frac{d}{dt}\big|_{t=0}
\langle\langle \widetilde{D}_{t},\omega\rangle\rangle$ the terms for $i>1$ have no contributions.
(This follows either from the definition of $\omega$, or from local computation via (\ref{eq:Tomega11}) using the fact that
${\rm Res}_{\widetilde{D}_i}(\omega)=0$.)  Computing the contribution of $\widetilde{D}_{1,t}$ by
 (\ref{eq:Tomega11}) we get
$$\frac{d}{dt}\Big|_{t=0}\,\langle\langle \widetilde{D}_{t},\omega\rangle\rangle=
(-4\pi^2)\cdot d(0,0,0)\cdot \widetilde{\varphi}_0(0) \hspace{1cm}(\widetilde{D}_t:=\cup_i \widetilde{D}_{i,t}).$$
Let $(0,y(t))$ be the intersection point $\widetilde{D}_{1,t}\cap E_v$.  Then, by taking the derivative at $t=0$
of the identity $y(t)+td(t,o,y(t))\equiv 0$,  we obtain
$y'(0)=-d(0,0,0)\in T_pE_v$.  This is the tangent vector  at $p$ of the path   $t\mapsto \widetilde{D}_{1,t}\cap E_v$ in
$(E_v,p)$.
Recall also that $\omega$ is well--defined up to a non--zero constant, let us make this choice in such a way that
$4\pi^2 \widetilde{\varphi}_0(0)=1$. Hence,
%with this canonical choice of $\omega$ we have the following geometric property  of the tangent map
\begin{equation}\label{eq:ident}
%T_D\omega\Big( \frac{d}{dt}\Big|_{t=0}\,  \widetilde{D}_{t} \Big)=
\frac{d}{dt}\Big|_{t=0}\,\langle\langle \widetilde{D}_{t},\omega\rangle\rangle=
\frac{d}{dt}\Big|_{t=0}y(t).\end{equation}
Therefore,  $T_\calL\omega$ identifies
the tangent vector at $\calL$  of
the path of line bundles $\calL_t:=\calO_{\tX}(\widetilde{D}_t)|_Z$
(shifted  by the constant $\calL^{-1}$)
with the tangent vector of the intersection point
$\widetilde{D}_{1,t}\in E_v$ at $p$
\begin{equation}\label{eq:ident2}
T_\calL\omega\Big( \frac{d}{dt}\Big|_{t=0}\, \calL_t \Big)=
%\frac{d}{dt}\Big|_{t=0}\,\langle\langle \widetilde{D}_{t},\omega\rangle\rangle=
\frac{d}{dt}\Big|_{t=0}y(t).\end{equation}

\bekezdes\label{ss:pole} {\bf The pole of $\omega$.}
Assume that the conditions {\it (a)--(e)} of Proposition \ref{prop:BASE} are satisfied (for $Z\gg 0$).
We claim that there exists a {\it unique minimal} $m\geq E_v$ for which {\it (b) } holds. [Indeed, if
$A$ and $B$ realize  equality in {\it (b)},  and $m:=\min\{A,B\}$, $M:=\max\{A,B\}$, then
$2(\chi(l')+1)=\chi(l'+A)+\chi(l'+B)\geq \chi(l'+m)+\chi(l'+M)\geq 2(\chi(l')+1)$.
Hence  equality holds everywhere and  $\min\{A,B\}$ realizes {\it (b)} with equality too.]
Then by Theorem \ref{th:OLD}{\it (d)}, $m$ is the smallest cycle $l\geq  E_v$ such that $h^1(\tX, \calL(-l))=0$.
Using the notation of Proposition \ref{prop:BASE} consider the exact sequence
$$0\to b^*\calL(-b^*m)\to  b^*\calL(-E^{new})\to b^*\calL(-E^{new})|_{b^*m-E^{new}}\to 0.$$
Since $h^1(b^*\calL(-b^*m))=h^1(\calL(-m))=0$,  and $h^1(b^*\calL(-E^{new}))=1$ (cf. \ref{lem:BASE} and
(\ref{eq:h^1=0})), we get that  $h^1(b^*m-E^{new},  b^*\calL(-E^{new})|_{b^*m-E^{new}})=1$.
This means that parts {\it (c)--(e)} of Proposition \ref{prop:BASE} remain valid for $Z=m$ instead of $Z\gg 0$.
In particular, the  form $\omega_p$ of {\it (e)} survives even if we reduce the allowed poles,
 that is, ${\rm pole}(\omega _p)\leq b^*m-E^{new}$. Thus
\begin{equation}\label{eq:pole}
{\rm pole}(\omega )\leq  m.
\end{equation}
(We expect that in (\ref{eq:pole}) equality holds, however, the inequality suffice for our purposes  here.)
\subsection{The `move' of the base point} \label{ss:move}
The final goal of this and next sections is to prove part {\it (5')} of
Theorem \ref{th:NEW2}.  This will follow from the more general Proposition  \ref{prop:GENERAL}, which says that
if the two line bundles $\calL_i=\calO_{\tX}(-l'_{i,top})$ ($i=1,2$) have base points then they cannot have a common base point
for generic analytic structure. In the proof we will follow the following strategy.
We will assume that there  is a common base point for a generic analytic structure, say $p$,
 and then we will perturb the analytic structure.
Then $p$ must survive as a common base point. On the other hand, we will measure the `move' of the
base point $p$ provided by the perturbation, this for $\calL_i$ will be described in terms of $l'_{i,top}$.
Since $p$ moves for both $\calL_i$ in the same way, this will impose strong restrictions regarding the two Chern classes
$l'_{i,top}$, and this will lead us to a contradiction.

In this subsection we will discuss the perturbation of the analytic structure and we will compute
the tangent vector  of the move of the base point along $E_v$.

Fix a generic pair $\tX\subset \tX_{top}$, a line bundle $\calL=\calO_{\tX}(-l'_{top})$, $l'_{top}\in L'(\tX_{top})$, and
its restriction $l'\in L'$. Assume that $\calL$ satisfies all the divisorial properties from
\ref{ss:BaseLB}, and that $p=E_v\cap \widetilde{D}_1$ is a base point (hence all the properties of
Proposition  \ref{prop:BASE} are valid). Let $\omega$ be as in \ref{ss:BaseLB}.

Assume that $\omega$ {\it has a pole of order  one} along a certain exceptional divisor $E_u$, $u\in \calv$.
Let $q$ be a generic  point of $E_u\setminus \cup_{w\not=u }E_w$.
We perform the perturbation of the analytic structure of the pair $\tX\subset \tX_{top}$ as in  \ref{bek:5.1.7} (based on
\ref{bek:5.1.def}).  In short, we blow up $(\tX_{top}, \tX)$ at $q$, we get $(\tX_{top}^{new}, \tX^{new})$,
then we reglue the tubular neighbourhood of $E^{new}$  with the neighbourhood of (the strict transform of) $E\subset
\tX^{new}$ as in \ref{bek:5.1.def} using a local parameter $\lambda\in (\C,0)$ (see also below).
The deformed space will be denoted by $(\tX_{top,\lambda}^{new}, \tX^{new}_\lambda)$. Note that the tubular neighborhood of
$E$ in $\tX^{new}_{\lambda}$ is independent of $\lambda$; it will be denoted by $\tX_b$.

If we blow down $E^{new}_{\lambda}$ in $(\tX_{top,\lambda}^{new}, \tX^{new}_\lambda)$ we get the generic pair
 $(\tX_{top,\lambda}, \tX_\lambda)$, a topological trivial deformation of $(\tX_{top},\tX)$. For the previous $l'_{top}$  we consider the family of line bundles $\calL_{\lambda}:=\calO_{\tX_{top,\lambda}}(-l'_{top})|_{\tX_{\lambda}}$.
Using the discussion from \ref{ss:BaseLB} applied for $\calL_{\lambda}$ we get that $\calL_{\lambda}$ has similar
divisorial  and numerical properties as $\calL$. Let $(0, y(\lambda))=p_\lambda\in E_v$ be the base point of $\calL_\lambda$
(with $p_0=p$). Our goal is the computation of $\frac{d}{d \lambda}\big|_{\lambda=0}y(\lambda)$.

Let $(x,y)$ be some local coordinates of some point on $E_u$  such that $E_u=\{x=0\}$. Let the local equation of $\omega$  be
%(up to a form, which has no pole)  be
$(\psi(y)+x\tau(x,y))dx\wedge dy/x$, with $\psi(y) \not\equiv 0$. Consider a generic point $q$ in this interval of $E_u$
and  let the blow up $b$ at $q $ be
$\{x=\alpha\beta, \ y-q=\beta\}$.
%$b^*\omega$ transforms into   $\psi_q(\beta)d\alpha \wedge d\beta/\alpha$.
 We will use the same local coordinated for the regluing as well:
$E^{new}_\lambda$ will have local equation $\beta+\lambda=0$.
\begin{lemma}\label{lem:derivative} With the above notations:
$$\frac{d}{d \lambda}\Big|_{\lambda=0}y(\lambda)= 4\pi^2\cdot \psi(q)\cdot \{\mbox{$E_u$--multiplicity of $l'_{top}$}\}.$$
\end{lemma}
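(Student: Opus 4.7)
The plan is to reduce the computation of $\frac{d}{d\lambda}|_{\lambda=0} y(\lambda)$ to an explicit Laufer integration via the identification (\ref{eq:ident2}). Applied to the family $\{\calL_\lambda\}$ (viewed on the common piece $\tX_b$, which embeds in every $\tX_\lambda^{new}$) at $\lambda = 0$, it yields
\[
\frac{d}{d\lambda}\Big|_{\lambda=0} y(\lambda) \;=\; (T_\calL \omega)\Bigl(\frac{d}{d\lambda}\Big|_{\lambda=0} \calL_\lambda\Bigr),
\]
so the problem reduces to the Laufer pairing of $\omega$ with the tangent vector in $\pic^0$ of the perturbed path of line bundles.

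Next I exploit the decomposition from \ref{bek:5.1.8}. Choose $N$ with $N l'_{top}$ integral, and write $b^*(-N l'_{top}) = -\ell_0 - m E^{new}$ in $L(\tX^{new})$; then $m/N$ equals the $E_u$-multiplicity $k$ of $l'_{top}$. Hence $\calL_\lambda^N|_{\tX_b}$ decomposes as $\calO_{\tX_b}(-\ell_0) \otimes \calO(-m E^{new}_\lambda)|_{\tX_b}$, with $\calO_{\tX_b}(-\ell_0)$ $\lambda$-independent and the second factor naturally identified with $\calO_{\tX_b}(-m \widetilde{D}_\lambda)$, where $\widetilde{D}_\lambda$ is a smooth divisor on $\tX_b$ transverse to $E_u$ at the intersection $E^{new}_\lambda \cap E_u'$. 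In the local chart $(x,y)$ at $q$, the regluing formula $u_1' + \lambda = \alpha$, $v_1' = \beta$ of \ref{bek:5.1.def}, combined with the natural identification of the $T(-1)$--chart with the blow-up chart (under which the zero section $\{\alpha = 0\}$ of $T(-1)$ corresponds to the exceptional locus $\{y = q\}$ of the blow-up, forcing $u_1' = y - q$ along $E_u'$), shows that the intersection sits at $y = q - \lambda$, and $\widetilde{D}_\lambda$ is locally cut out by $y - q + \lambda = 0$. Taking $\exp^{-1}$ and dividing by $N$ gives $\exp^{-1}(\calL_\lambda \otimes \calL^{-1}) = -k \cdot \exp^{-1}\calO(\widetilde{D}_\lambda - \widetilde{D}_0)$ in $H^1(\calO_{\tX_b})$.

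Finally, I apply the explicit formula (\ref{eq:Tomega}) to the family $\widetilde{D}_\lambda = \{y - q + \lambda = 0\}$ with $n = 1$ and $o = 1$. Writing $\omega = (\psi(y) + x \tau(x,y))\, dx \wedge dy / x$ near $q$ and expanding $\psi$ at $y = q$, the coefficient $a_{-1,0}$ of $x^{-1}y^0\,dx \wedge dy$ (in coordinates centered at $q$) equals $\psi(q)$, and (\ref{eq:Tomega}) gives $\frac{d}{d\lambda}|_{\lambda=0} \langle\langle \widetilde{D}_\lambda, \omega \rangle\rangle = -4\pi^2 \psi(q)$. Combining with the factor $-k$ from the previous step yields $\frac{d}{d\lambda}|_{\lambda=0} y(\lambda) = (-k)(-4\pi^2 \psi(q)) = 4\pi^2 \psi(q) \cdot k$, as claimed. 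The principal obstacle is the coordinate bookkeeping in the middle step: one must verify that the regluing parameter $\lambda$ shifts the divisor $\widetilde{D}_\lambda$ by exactly $\lambda$ in the $y$-direction (with implicit proportionality constant one), since any other normalization would alter the leading factor $4\pi^2$ in the stated formula.
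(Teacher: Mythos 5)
Your argument is correct and follows essentially the same route as the paper: reduce via (\ref{eq:ident2}) to the Laufer pairing, pass to the $N$-th tensor power so that the restricted natural bundle is given by an integral divisor, isolate the $\lambda$-dependence as the multiple $N\,l'_{top,u}$ of the moving cut $E^{new}_\lambda|_{\tX_b}$ (local equation $\beta+\lambda=y-q+\lambda=0$), and evaluate the residue integral to get $-4\pi^2\psi(q)$ per unit, hence $4\pi^2\psi(q)\,l'_{top,u}$. The only (cosmetic) differences are that the paper computes in the blow-up chart $(\alpha,\beta)$ directly from (\ref{eq:Tomega11}) and spells out why the pairing may be tested on $\tX_b$ (the class of $b^*\omega$ survives in $H^1(\calO_{\tX_b})$ since it has no pole along $E^{new}$, and $(b^*l'_{top},E^{new})=0$), points you use implicitly.
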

\begin{proof}
Let $N\in \Z_{>0}$ such that $Nl'_{top}\in L(\tX_{top})$ is an integral cycle supported in the exceptional curve of
$\tX_{top}$. The point is that the line bundle $\calL_\lambda ^{\otimes N}=\calO_{\tX}(N\cdot
\widetilde{D}_\lambda)$ can also be represented, via its very definition of a restricted natural line bundles, as
 $\calO_{\tX_{top,\lambda}}(-N\cdot l'_{top})\big|_{\tX_\lambda}$ (with its `genuine' definition of $\calO(-l)$ with integral $l$). Therefore, after the blow up, we have
$b^*\calL_\lambda^{\otimes N}= \calO_{\tX^{new}_{top,\lambda}}(-N\cdot
b^*l'_{top})\big|_{\tX^{new}_\lambda}$, cf. Lemma \ref{lem:BUnatline}.
Furthermore, from (\ref{eq:ident2}), one has
\begin{equation}\label{eq:ident3}
\frac{d}{d\lambda}\Big|_{\lambda=0}y(\lambda)=
T_\calL\ \omega \Big( \frac{d}{d\lambda }\Big|_{\lambda=0}\, \calL_\lambda \Big)=
\frac{1}{N}\cdot T_{b^*\calL}\ \omega\Big( \frac{d}{d\lambda }\Big|_{\lambda=0}\,
b^*\calL_\lambda^{\otimes N} \Big).\end{equation}

Next, we wish to compute the right hand side via restriction on $\tX_b$ (the neighbourhood of $E$).
 We start  with two observations.  First,
though the kernel of  $H^1(\calO_{\tX^{new}_{\lambda}})\to H^1(\calO_{\tX_b})$ might be nontrivial,
the class of $\omega$ is not in this kernel. Indeed,
since $\omega$ has pole order one along $E_u$, and $q$ was a generic point of $E_u$, $b^*\omega $
has no pole along $E^{new}$, hence the statement follows from (\ref{eq:ezlc}).
Secondly, $(b^*l'_{top}, E^{new})=0$, hence the divisors of any section of the line bundles are supported in
$\tX_b$, and the integral pairings can be tested in $\tX_b$. In particular, the right hand side of
(\ref{eq:ident3}) equals
\begin{equation}\label{eq:RED}\frac{1}{N}\cdot
T_{b^*\calL|\tX_b}\, \omega \Big(\frac{d}{d\lambda}\Big|_{\lambda=0}\,
\calO_{\tX^{new}_{top,\lambda}}(-N\cdot b^*l'_{top})\big|_{\tX_b}\Big).\end{equation}
In this situation we can apply the construction from \ref{bek:transport}. Indeed, in the local chart $(\alpha,\beta)$  of
$E_u\cap E^{new}\in \tX_b$
the divisor of $\calO_{\tX^{new}_{top,\lambda}}(-N\cdot b^*l'_{top})\big|_{\tX_b}\oplus
\big(\calO_{\tX^{new}_{top}}(-N\cdot b^*l'_{top})\big|_{\tX_b}\big)^{-1}$ is
$-Nl'_{top, u}(E^{new}_\lambda-E^{new})\big|_{\tX_b}$. It has local equation
 $\big( \frac{\beta+\lambda}{\beta}\big)^{-Nl'_{top,u}}$.
Recall that the $E_u$--coefficient $l'_{top,u}$ of $l'_{top}$ is non--zero by the assumptions of Theorem \ref{th:NEW2}.
Therefore, by (\ref{eq:Tomega11}), the expression from (\ref{eq:RED}) before taking
$\frac{\partial}{\partial \lambda}|_{\lambda=0}$ is
$$ -l'_{top,u}\cdot \
\int_{|\alpha|=\epsilon, \ |\beta|=\epsilon} \log\Big(1+\frac{\lambda}{\beta}\Big)\cdot
\big(\psi(\beta+q)+\alpha \beta\tau\big)\cdot \frac{d\alpha \wedge d \beta}{\alpha}.$$
Its derivative at $\lambda=0$  is exactly the right hand side of the identity from Lemma \ref{lem:derivative}.
\end{proof}

\bekezdes {\bf `Rational line bundles'.}
In some arguments  it is convenient to use the
formalism of `rational line bundles', well defined whenever
the Picard group of $\tX$ has no torsion. We follow  \cite{CoNa}.

 \begin{definition}\label{def:RLB} Consider a
normal surface singularity with rational homology sphere link. Fix one of its
good resolutions $\tX$, an effective integral  cycle $Z \in L_{>0}$  and
 $l'' \in L'_{|Z|} \otimes \bQ$.

A rational line bundle on $Z$ % with Chern class $l''$
is an equivalence class of  pairs
 $(N, \calL)\in \Z_{>0}\times\pic^{N \cdot l''}(Z)$   such that $N \cdot l'' \in L'_{|Z|}$.
Two pairs $(N_1, \calL_1)$ and $(N_2, \calL_2)$ are equivalent if $ \calL_1^{N_2} \cong  \calL_2^{N_1}$.
We call $l''$ the Chern class of the rational line bundle $(N,\calL)$, and we denote the set of rational line bundles with Chern class $l''$ by $\pic^{l''}(Z)$.
If $\calL \in \pic^{l''}(Z)$, we write  $c^1(\calL) = l''$.
\end{definition}

Since the Picard group is  torsion free and each ${\rm Pic}^{l'}(Z)$ ($l'\in L'(|Z|)$)
is  isomorphic to $H^1(\calO_Z)$ as affine spaces, we obtain an isomorphism of affine spaces
$\pic^{l''}(Z) \cong H^1(\calO_Z)$
 for any $l'' \in L'_{|Z|} \otimes \bQ$ as well. If $l'\in L'(|Z|)$ then the rational line bundles and (usual)
 line bundles with Chern class
 $l'$ can naturally be identified. Sometimes we abridge $(N,\calL)$ by $\mathfrak{L}$.

If we have classes of two rational line bundles $(N_1,\calL_1) \in \pic^{l''_1}(Z)$ and $(N_2,\calL_2) \in \pic^{l''_2}(Z)$, then we define $(N_1,\calL_1)  \otimes (N_2,\calL_2 ) \in \pic^{l''_1 + l''_2}(Z)$ as the rational line bundle represented by
%If $\calL_1$ is represented by $(N_1, \calL_{s, 1})$ and $\calL_2$ is represented by $(N_2, \calL_{s, 2})$, then
%$\calL_1 \otimes \calL_2$ is  represented by
$(N_1 \cdot N_2, \calL_{ 2}^{N_1} \otimes   \calL_{ 1}^{N_2} )$.
% (and the definition is independent of the choice of the representatives).
Similarly, for any rational line bundle $(N,\calL)$ with Chern classes $l''$
we can define $(N,\calL)^{-1}$ by $(N, \calL^{-1})$ and $(N,\calL)^{n/m}$ by $(Nm,\calL^n)$ for any
positive  rational number $r=n/m \in \bQ_{>0}$.
They have Chern classes $-l''$ and $ r \cdot l''$ respectively.

If $D \in \eca^{l'}(Z)$ is a divisor  and $r \in \bQ_{>0}$ is a positive
rational number, then the pair $(N, \calO_Z(Nr \cdot D))$ defines a rational line bundle whenever
 $Nr \in \bZ_{>0}$.  We denote it  by $\calO_Z(r \cdot D)$.

In \cite[Lemma 5.0.4]{CoNa} the following fact is proved.

\begin{lemma}\label{rat}
Let $\tX$ be as in Definition \ref{def:RLB}.
Fix an effective integer cycle $Z \geq E$ and  a
vertex $w \in \calv$ such that the $E_w$--coefficient of $Z$ is one.
Fix also  a rational number $a_w \in \bQ_{>0}$.

Consider  a rational line bundle $\mathfrak{L}$ with Chern class $l'' \in L' \otimes \bQ$, such that $l':=l'' + a_w E_w^* \in L'$.
Moreover, we assume that
for a generic point $p_w \in E_w$  and the associated rational divisor  $D =  a_w p_w$
we have  $H^0(Z, \mathfrak{L} \otimes \calO_Z(-D) )_{{\rm reg}} \neq \emptyset$.

Next, we fix an integer $0 \leq k \leq \lfloor a_{w}\rfloor$ and we write
 $d(k) = 0$ if $k=a_{w}$ and $d(k) = 1$ otherwise. Furthermore,
we consider  $k + d(k)$ generic points  $\{q_{j}\}_{1\leq j\leq k+d(k)}$
on the exceptional divisor $E_v$, and $k + d(k)$ positive rational numbers
$r_{j} = 1$ if $1 \leq j \leq k$ and  $r_{k + 1} = a_{w} - k$ whenever  $d(k) = 1$.

Consider the rational divisor  $D' = \sum_ { 1 \leq j \leq k + d(k)} r_{ j} \cdot q_{ j}$
 of $Z$ (supported on $E_w$). Then (i)
 $h^0(Z, \mathfrak{L} \otimes \calO_Z( - D)) = h^0(Z, \mathfrak{L} \otimes \calO_Z( - D'))$ and
(ii) $ H^0(Z, \mathfrak{L} \otimes \calO_Z( - D'))_{{\rm reg}} \neq \emptyset$.
\end{lemma}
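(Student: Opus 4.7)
The plan is to argue that the two rational line bundles $\mathfrak{L}\otimes\calO_Z(-D)$ and $\mathfrak{L}\otimes\calO_Z(-D')$ both lie in a common dense Zariski-open stratum of a constructible subvariety of $\pic^{l'}(Z)$, on which $h^{0}$ and the ``no-fixed-components'' condition are both constant; genericity of the chosen points on $E_{w}$ will then ensure that both bundles land in this stratum. Since $\pic^{0}(Z)$ is torsion-free, every rational line bundle with integer Chern class is a canonical integer line bundle and $N$-th roots in $\pic$ are unique, so all bundles in question live in the fixed affine space $\pic^{l'}(Z)$.

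For the concrete setup, I would fix $N\in\bZ_{>0}$ such that $Na_{w}$ and $Nr_{j}$ are integers, and consider the algebraic map
\[
\Phi:\mathrm{Sym}^{Na_{w}}(E_{w}^{\rm sm})\longrightarrow\pic^{Nl'}(Z),\qquad \sum m_{j}x_{j}\mapsto\mathfrak{L}^{N}\otimes\calO_{Z}\Big(-\sum m_{j}x_{j}\Big),
\]
where $E_{w}^{\rm sm}$ denotes $E_{w}$ minus its intersection points with the other exceptional components. Both $\Phi(Na_{w}p_{w})$ and $\Phi(\sum Nr_{j}q_{j})$ lie in the image $V\subseteq\pic^{Nl'}(Z)$, which is an irreducible constructible subvariety. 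Upper-semicontinuity of $h^{1}$ along $V$ yields a dense Zariski-open subset $V_{0}\subseteq V$ on which $h^{1}$ (equivalently $h^{0}$, by constancy of $\chi$) attains its minimum value. Genericity of $p_{w}$ together with the hypothesis $H^{0}(\mathfrak{L}\otimes\calO_{Z}(-D))_{\rm reg}\neq\emptyset$ forces $\Phi(Na_{w}p_{w})\in V_{0}$, and genericity of the $q_{j}$'s on $E_{w}^{\rm sm}$ forces $\Phi(\sum Nr_{j}q_{j})\in V_{0}$ as well; hence the two $N$-th-power bundles have the same $h^{0}$, and extracting $N$-th roots gives (i).

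For (ii) one argues directly at the rational level: the assumption provides a regular divisor $\widetilde{D}_{D}\in\eca^{l'}(Z)$ with $c^{l'}(Z)(\widetilde{D}_{D})=\calL_{D}:=\mathfrak{L}\otimes\calO_{Z}(-D)$. The rational divisor $D-D'$, supported on $E_{w}^{\rm sm}$ with vanishing total degree, defines a class in $\pic^{0}(Z)$, and one seeks to deform $\widetilde{D}_{D}$ within $\eca^{l'}(Z)$ (by moving its intersection points with $E_{w}$) so that the image under $c^{l'}(Z)$ traces out a path from $\calL_{D}$ to $\calL_{D'}:=\mathfrak{L}\otimes\calO_{Z}(-D')$. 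The existence of such a deformation is controlled by the surjectivity of $T_{\widetilde{D}_{D}}c^{l'}(Z)$ in the direction $\calO_{Z}(D-D')$; for generic choices of the points this surjectivity holds by Theorem~\ref{th:Formsres}(c), since the obstructing Laufer residues would be forced to vanish by the genericity of $p_{w}$ and the $q_{j}$'s. The endpoint of the deformation gives the required regular divisor of $\calL_{D'}$, establishing (ii).

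The main obstacle is the precise verification of this tangent-map surjectivity in the atomic deformation step that replaces the fat summand $a_{w}p_{w}$ by the split $\sum r_{j}q_{j}$. Concretely, one must show that the space of differential forms $\omega\in H^{0}(\tX,\Omega^{2}_{\tX}(Z))/H^{0}(\tX,\Omega^{2}_{\tX})$ whose Leray residues along $\widetilde{D}_{D}$ annihilate the tangent direction corresponding to $\calO_{Z}(D-D')$ coincides with the space of forms defining $T_{\widetilde{D}_{D}}c^{l'}(Z)$. Genericity of all the points involved on $E_{w}^{\rm sm}$ is used crucially to rule out accidental residue coincidences, and this is the technical heart of the argument.
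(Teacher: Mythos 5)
Your argument for (i) has a genuine gap at its key step. Upper semicontinuity on the image $V$ of $\mathrm{Sym}^{Na_w}(E_w^{\rm sm})$ only tells you that a \emph{generic} configuration of $Na_w$ distinct points lands in the minimal-$h^1$ stratum $V_0$, and that degenerate configurations have $h^0$ at least as large as the generic value. The two configurations you need, $Na_w p_w$ (all mass at one point) and $\sum_j Nr_j q_j$ (mass collapsed into $k+d(k)$ groups), lie in proper closed diagonal strata of the symmetric product; choosing $p_w$ or the $q_j$ generically on $E_w$ makes them generic points \emph{of those strata}, not of $\mathrm{Sym}^{Na_w}(E_w^{\rm sm})$, so nothing forces them into $V_0$. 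Comparing $h^0$ across these strata --- i.e.\ showing that collapsing points (under the hypothesis $H^0(Z,\mathfrak{L}\otimes\calO_Z(-D))_{\reg}\neq\emptyset$, which your argument never really uses) does not raise $h^0$ --- is precisely the nontrivial content of the lemma, so the proposal assumes what is to be proved. A second, independent problem is the final step ``extracting $N$-th roots gives (i)'': knowing $h^0(\calL_1^{\otimes N})=h^0(\calL_2^{\otimes N})$ gives no information about $h^0(\calL_1)$ versus $h^0(\calL_2)$; you would have to run the family argument directly in $\pic^{l'}(Z)$ (which is possible, since the $N$-th root map $\pic^{Nl'}(Z)\to\pic^{l'}(Z)$ is an isomorphism of affine spaces), but that does not repair the first gap. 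For (ii), your deformation scheme hinges on surjectivity of the tangent map of the Abel map at the given divisor, which you acknowledge you cannot verify; in fact it may genuinely fail, since by Theorem \ref{th:dominant2} a positive $h^1$ allows the image of $c^{l'}(Z)$ to have positive codimension, and even a local submersion would only reach an open neighbourhood, not the specific bundle $\mathfrak{L}\otimes\calO_Z(-D')$.

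For comparison, the paper does not reprove (i) at all: it is quoted from \cite[Lemma 5.0.4]{CoNa}, which establishes it for $k=\lfloor a_w\rfloor$, and the version for arbitrary $k$ is obtained by applying that cited statement twice (first splitting off the $k$ simple points, then comparing the remaining fat point $r_{k+1}q_{k+1}$ with a generic configuration). Part (ii) is then deduced from (i) by a short contradiction: if $\mathfrak{L}\otimes\calO_Z(-D')$ had a nonzero fixed-component cycle $A>0$, then
$h^0(Z,\mathfrak{L}\otimes\calO_Z(-D'))=h^0(Z-A,\mathfrak{L}\otimes\calO_{Z-A}(-D'-A))\leq h^0(Z-A,\mathfrak{L}\otimes\calO_{Z-A}(-D-A))<h^0(Z,\mathfrak{L}\otimes\calO_Z(-D))$, the last strict inequality coming from the hypothesis $H^0(Z,\mathfrak{L}\otimes\calO_Z(-D))_{\reg}\neq\emptyset$, contradicting (i). If you want a self-contained proof of (i), you would need the machinery of \cite{CoNa} (Abel maps, Laufer duality and residue computations), not semicontinuity alone.
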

\begin{remark}  (a)
The first statement {\it (i)} was proved in \cite{CoNa} only in the case $ k = \lfloor a_w\rfloor $.
The present version, valid for arbitrary $k$,  follows by application of \cite{CoNa} twice. Indeed, set
some other generic points $\bar{q}_j$ for $k+1\leq j\leq \bar{J}:=\lfloor a_w\rfloor +d( \lfloor a_w\rfloor)$, set
$\bar{r}_j =1$ for  $k+1\leq j\leq \lfloor a_w\rfloor $ and $\bar{r}_{\bar{J}}=a_w- \lfloor a_w\rfloor$,
$\bar{D}'=\sum _{j\geq k+1}\bar{r}_j\bar{q}_j$, $\bar{\mathfrak{L}}=\mathfrak{L}\otimes \calO_Z(-\sum_{j\leq k}q_j)$.
Then
$h^0(\mathfrak{L}(-D'))=h^0(\bar{\mathfrak{L}}(-r_{k+d(k)}q_{k+d(k)}))
\stackrel{*}{=} h^0(\bar{\mathfrak{L}}(-\bar{D}'))=h^0(\mathfrak{L}(-\sum_{j\leq k} q_j-\bar{D}'))
\stackrel{*}{=}h^0(\mathfrak{L}(-D))$, where $\stackrel{*}{=}$ is the statement  from \cite{CoNa}.

(b) Part {\it (ii)}  is not written explicitly in \cite{CoNa}, however it is a consequence of {\it (i)}. Indeed,
let $A\geq 0$ be the cycle of fixed components of $ \mathfrak{L} \otimes \calO_Z( - D')$, and assume that
$A>0$. Then $h^0(Z, \mathfrak{L}\otimes \calO_Z (-D'))=
h^0(Z-A, \mathfrak{L}\otimes \calO_{Z-A} (-D'-A))  \leq %\stackrel{1}{\leq }
h^0(Z-A, \mathfrak{L}\otimes \calO_{Z-A} (-D-A)) < %\stackrel{2}{<}
h^0(Z, \mathfrak{L}\otimes \calO_Z (-D))\stackrel{(i)}{=}h^0(Z, \mathfrak{L}\otimes \calO_Z (-D'))$,
which is a contradiction. The first inequality  follows from semicontinuity, the second one  from the assumption
 $H^0(Z, \mathfrak{L} \otimes \calO_Z(-D) )_{{\rm reg}} \neq \emptyset$.
\end{remark}

\section{Proof of Theorem \ref{th:NEW2}{\it (5')}}\label{s:Proof5}

\subsection{} In this subsection besides the `standard' restriction $l'_{top,v}>0$ for any $v\in\calv$,
%\marginpar{3.3.6 felteteleit  valtoztatni !?}
we will also assume that $l'_{top,v}\geq 0$ for any $v\in\calv_{top}$, cf. part {\it (5')} of Theorem \ref{th:NEW2}.
%Note that in our main applications (cf. Theorem \ref{th:NEW1}, when $\calv=\calv_{top}$), this is automatically satisfied.

\bekezdes \label{bek:5..0} {\bf Reduction of $\tX_{top}$.}
Recall that  our main goal is to study certain line bundles $\calL\in \pic(\tX)$:  we impose that
they are restrictions of natural bundles from a `top  level' $\tX_{top}$ (but the subject of the study is the restriction
and not the top level bundle). For any pair $\tX\subset \tX_{top}$ let $\tX_{top}'$ be the tubular neoighbourhood
of the exceptional divisors of $\tX_{top}$ which intersect $E$. We claim that any restricted natural line bundle
restricted from $\tX_{top}$ can be realized as a restricted natural line bundle
restricted from the smaller  $\tX_{top}'$. Indeed, it is enough to verify the claim for integral cycle supported on the
exceptional curves, and
$\calO_{\tX_{top}}(-\sum _{v\in\calv_{top}}n_vE_v)|_{\tX}=\calO_{\tX_{top}'}(-\sum _{v\in\calv_{top}'}n_vE_v)|_{\tX}$.
Moreover, if $\tX\subset \tX_{top}$ is a generic pair then the same is true for $\tX\subset \tX_{top}'$ (cf. \ref{ss:genan}),
and also the positivity of the $l'_{top}$--coefficients is preserved by this replacement. Hence, without loss of
generality,  we can replace $\tX_{top}$ by $\tX_{top}'$. Furthermore, in this new situation, if
$l'_{top,w}=0$ for some $w\in \calv_{top}\setminus \calv$, then we can restrict
$\tX_{top}$ more by taking the tubular neighbourhood of $\cup_{v\in\calv_{top}\setminus w}E_v$.
Hence after this reduction all the coefficients of $l'_{top}$ are strict positive.

\bekezdes
 Theorem \ref{th:NEW2}{\it (5')} follows from the next more general statement
regarding base points of restricted natural line bundles (in its formulation we already applied  the discussions from
\ref{bek:5..0}).

\begin{proposition}\label{prop:GENERAL} Fix a generic pair $\tX\subset \tX_{top}$, such that
$E\cap E_u\not=\emptyset$ for any $u\in \calv_{top}$. Moreover, we fix
%\marginpar{itt az volt hogy $b_u\geq a_u$, kell??}
$l'_{1, top} = \sum_{u \in \calv_{top}} a_u E_u$ and $l'_{2, top} = \sum_{u \in \calv_{top}} b_u E_u$
in $L'(\tX_{top})$ with
$a_u,\, b_u\in \Q_{>0}$.  %$ for any  $u \in \calv_{top}$.
We set  the cohomological restrictions  $l'_i = R(l'_{i, top}) \in L'$ and assume
 that $l'_i \in S'_{an}(\tX)$ ($i=1,2$).
 We also fix a certain vertex $v\in \calv$ for which   $b_v > a_v$.
 % so we have $H^0(\calO_{\tX}(-l'_{i, top}))_{reg} \neq \emptyset$.
%If the pair $\tX \subset \tX_{top}$ is enough generic, then
Under these assumptions,
the restricted natural line bundles $\calO_{\tX}(-l'_{1, top}) $ and  $\calO_{\tX}(-l'_{2, top})$
cannot have a common base point on  $ E_v$.
\end{proposition}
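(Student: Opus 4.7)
I would argue by contradiction: suppose that $p\in E_v$ is a common base point of $\calL_1:=\calO_{\tX}(-l'_{1,top})$ and $\calL_2:=\calO_{\tX}(-l'_{2,top})$. By Theorem~\ref{th:NEW2}{\it (2')} I first choose, for each $i=1,2$, a generic section $s_i\in H^0(\tX,\calL_i)$ whose divisor $\widetilde{D}_i=\bigcup_j \widetilde{D}_{i,j}$ is smooth, meets $E$ transversally, and has the component $\widetilde{D}_{i,1}$ passing through $p$. Proposition~\ref{prop:BASE}{\it (e)} together with \ref{bek:Omega} then produces forms $\omega_i\in H^0(\tX\setminus E,\Omega^2_{\tX})$, unique modulo $H^0(\tX,\Omega^2_{\tX})$ and a non-zero scalar, whose Leray residues along every component of $\widetilde{D}_i$ are regular and which have non-trivial pole along $E_v$. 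I normalize each $\omega_i$ as in the derivation of \eqref{eq:ident2}, so that $T_{\calL_i}\omega_i$ computes, via Laufer integration, the tangent at $p$ of the position of the base point of $\calL_i$ on $E_v$ under any holomorphic deformation of $(\tX,\tX_{top})$.

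\textbf{Perturbation and tangent comparison.} Next I apply the Laufer-type regluing of \ref{bek:5.1.def} at a generic point $q\in E_u$ with $u\in\calv$ chosen so that both $\omega_1$ and $\omega_2$ have pole of order exactly one along $E_u$. Such a vertex $u$ can be arranged after a preparatory sequence of generic blow-ups on a common refinement of the pole divisors of the two forms, via the pole-descent mechanism of \ref{bek:5.1.1}{\it (vi)}, invoking Lemmas~\ref{lem:BLOWUP} and \ref{lem:BUnatline} to preserve genericity and the restricted natural line bundle structure. The regluing produces a holomorphic family $(\tX_\lambda,\tX_{top,\lambda})_{\lambda\in(\bC,0)}$ of generic pairs together with induced restricted natural line bundles $\calL_{i,\lambda}$. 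Theorem~\ref{th:NEW2}{\it (4')} ensures that each $\calL_{i,\lambda}$ has exactly $-(l'_i,E_v)$ base points on $E_v$, depending holomorphically on $\lambda$; let $p_{i,\lambda}\in E_v$ denote the base point near $p$, with $p_{i,0}=p$. Granting the persistence $p_{1,\lambda}\equiv p_{2,\lambda}$, differentiating at $\lambda=0$ and invoking Lemma~\ref{lem:derivative} yields
\begin{equation*}
\psi_1(q)\cdot a_u \;=\; \psi_2(q)\cdot b_u,
\end{equation*}
where $\psi_i(q)$ is the leading Laurent coefficient of $\omega_i$ at $q$ along $E_u$.

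\textbf{Extraction of the contradiction.} Letting $q$ range over $E_u$ forces the meromorphic function $f:=\omega_1/\omega_2$ on $\tX$ to be the constant $b_u/a_u$ on $E_u$; letting $u$ range over the connected sub-configuration of exceptional components where both forms have pole one (which, after the preparatory blow-ups, contains $E_v$) and imposing compatibility at intersection points, one obtains $f\equiv c$ globally, with $c=b_v/a_v>1$. Consequently $\omega_1=c\,\omega_2$ as meromorphic $2$-forms, so $\omega_2$ has regular Leray residues simultaneously along $\widetilde{D}_1\cup\widetilde{D}_2$. Theorem~\ref{th:Formsres}{\it (b)} then identifies the class of $\omega_2$ in $H^0(\tX,\Omega^2_{\tX}(Z)^{\mathrm{regRes}_{\widetilde{D}_1\cup\widetilde{D}_2}})/H^0(\tX,\Omega^2_{\tX})$ with an element of the dual of $H^1(Z,\calL_1\otimes\calL_2)$. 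Since $\calS'_{an}$ is a semigroup under addition (via products of defining sections), $l'_1+l'_2\in\calS'_{an}$, and Theorems~\ref{th:B}{\it (II)} and \ref{th:dominant} give $H^1(Z,\calL_1\otimes\calL_2)=0$. Hence $\omega_2\in H^0(\tX,\Omega^2_{\tX})$ is a regular $2$-form, contradicting its non-trivial pole along $E_v$.

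\textbf{Main obstacle.} The hard part will be justifying the persistence $p_{1,\lambda}\equiv p_{2,\lambda}$, i.e.\ showing that if one generic pair carries a common base point on $E_v$ then so does every member of the Laufer deformation family produced by the regluing. My plan is a dichotomy: either the locus $B\subset U$ of generic pairs with a common base point on $E_v$ equals the whole generic locus $U$ (in which case the derivative argument above applies verbatim), or $B\subsetneq U$ is a proper algebraic subvariety that must be identified with (a subset of) a cohomological jump discriminant for an auxiliary restricted natural line bundle, so that a generic pair in the sense of \ref{ss:genan} cannot lie in $B$ and the original hypothesis $\lambda_0\in B$ is vacuous. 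Pinning down the correct cohomological interpretation of the common-base-point condition is the most delicate analytic step.
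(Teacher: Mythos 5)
Your opening moves (common base point, the two forms $\omega_i$ from Proposition \ref{prop:BASE}, pole reduction by generic blow--ups, the regluing of \ref{bek:5.1.def} and Lemma \ref{lem:derivative} giving $a_u\psi_1(q)=b_u\psi_2(q)$) coincide with the paper's, and the persistence issue you flag as the ``main obstacle'' is handled there essentially by your first alternative (genericity is preserved along the deformation, so the base points stay common and vary holomorphically). The genuine gap is in your ``extraction of the contradiction''. The relation $a_u\psi_1(q)=b_u\psi_2(q)$ for generic $q$ only says that $a_u\omega_1-b_u\omega_2$ has strictly smaller pole order along $E_u$ (this is exactly Lemma \ref{lem:5..3}(2)); it does not give the exact global identity $\omega_1=c\,\omega_2$. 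The forms are only well defined modulo $H^0(\tX,\Omega^2_{\tX})$ and a scalar, and a meromorphic function whose ``leading value'' along the exceptional curves is constant need not be globally constant, so $f\equiv c$ is unjustified. Moreover, even granting $\omega_1=c\,\omega_2$, the final step misreads Proposition \ref{prop:BASE}/\ref{bek:Omega}: the Leray residue of $\omega_i$ along the component of $\widetilde D_i$ passing through the base point $p$ is \emph{not} regular (in \ref{bek:Omega} one has $\widetilde\varphi_0(0)\neq0$, i.e.\ a simple pole at $p$), and the two components of $\widetilde D_1\cup\widetilde D_2$ through $p$ are not disjoint, so Theorem \ref{th:Formsres} does not apply to $D_1+D_2$. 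A telling symptom: your concluding mechanism never uses $b_v>a_v$ (nor $c\neq1$), so if it were valid it would apply with $l'_1=l'_2$ and ``prove'' that $\calO_{\tX}(-l'_{top})$ has no base points at all, contradicting part {\it (4')}.

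The second, structural omission is that your argument never confronts the coefficients $a_w,b_w$ for $w\in\calv_{top}\setminus\calv$, which is where the hypothesis $b_v>a_v$ actually does its work in the paper. There, the deformation argument yields proportionality $b_u/a_u=b_v/a_v$ only for $u\in\calv$ (and this already needs the ``$\omega$--divisorial property'' at the nodes of $E$, which you gloss over); the top--level edges are collected into the set $\cale$ of \ref{bek:pole}. The case $\cale=\emptyset$ is excluded by a purely numerical computation with $\chi$ and the dominance of $l'_1$, while $\cale\neq\emptyset$ is excluded by a minimal--counterexample argument: $N$--fold blow--up at $q=E_u\cap E_w$, the rational--line--bundle Lemma \ref{rat} trading the large Chern coefficient at the new curve for $k$ generic points on $E_u$, the observation from \ref{bek:pole} that at each such point both forms must acquire a divisor, and finally the $N$--independent bound $(D_i,E_u)\leq(m_i-E_u,E_u)-2$ on the number of such divisor components, which is violated for $k\gg0$. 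None of this is replaced by anything in your proposal, so as it stands the proof does not go through.
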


The proof is divided in several steps.

\bekezdes \label{bek:5..1}   {\bf The `standard setup'.}
 Assume the contrary, i.e., the two line bundles have  a common base point
$p$ along $E_v$. Then for both line bundles the  parts {\it (1')--(4')} of Theorem  \ref{th:NEW2}, and also all the statements
{\it (a)--(e)} of Proposition \ref{prop:BASE} are valid (for the same $p\in E_v$). Let $\omega_i$ ($i=1,2$)  be the
two forms in $H^0(\tX\setminus E,\Omega^2_{\tX})$ associated with the bundles $\calO_{\tX}(-l'_{i,top})$
and the base point $p$ as in \ref{ss:BaseLB}. We  will call this the `standard setup'.

From this we wish to obtain a contradiction using the strategy mentioned in \ref{ss:move}.

First,  via certain reductions, we add some other properties to the standard  setup.

\bekezdes \label{bek:5..2} {\bf The `$\omega$--divisorial property'.}
Consider an intersection point $q=E_u\cap E_w$, $u,w\in\calv$ and local coordinates $(x,y) $  at $(\tX,q)$ such that
$\{x=0\}=E_u$ and $\{y=0\}=E_w$. Let the local equation of $\omega_i$  be $\varphi_i(x,y)x^ay^b dx\wedge dy$, with
$a,b\in \Z$, and $x\nmid\varphi_i$ and  $y\nmid\varphi_i$. If $\varphi_i(0)=0 $ then we say that $\{\varphi_i(x,y)=0\}$ is a
divisor of $\omega_i$ at $q$. Otherwise, $\omega_i$ has no divisor at $q$.
Then one proves (similarly as the existence of good embedded resolution of plane curves)
that if we perform a conveniently chosen sequence of blow ups $b:\tX_{top, b}\to\tX_{top}$ at several infinitesimally close points of $q$, and we set $\tX_b$ and $E_b$ for the inverse images of $\tX$ and $E$,  then $b^*\omega_i$
will have no divisor at the singular points of $E_b$. Therefore, if we replace
the system $\tX_{top}, \ \tX, \ l'_{i, top}, \ \omega_i$  by $\tX_{top,b}, \ \tX_b, \ b^*l'_{i, top}, \ b^*\omega_i$,
this new system satisfies the `standard setup', but additionally also the new $\omega$--divisorial property realized by them,
i..e. they have no divisors at the singular points of $E=\cup_{v\in\calv}E_v$.
(The fact that the pair  $(\tX_{top,b},  \tX_b)$ is generic follows by the first paragraph of \ref{ss:proofpart1}, and
the stability of the properties from Proposition \ref{prop:BASE} via $b^*$  can also be verified.)  Hence, in the sequel we will
assume that in our `standard setup' from  \ref{bek:5..1} the `$\omega$--divisorial property' is also realized
for both $\omega _i$.

\bekezdes \label{bek:5..3} {\bf The poles of $\omega_i$.}  Consider a generic point $q$ of one of the exceptional curves $E_u$,
$u\in \calv$, such that at least one of the $\omega_i$'s has a non--trivial pole along $E_u$. Let $o_i$ be the pole order of
$\omega_i$ along $E_u$ (if $\omega_i$ has no pole then $o_i=0$).
Write $o_1\geq o_2$, hence $o_1>0$. Similarly as in \ref{bek:5.1.1}, we blow up $\tX$ at a generic point $q_1$ of $E_u$, and we get the new exceptional divisor $F_1$, then we blow up a generic point $q_2$ of $F_1$, etc. we repeat this $o_1-1$ times,
the last new exceptional divisor is $F_{o_1-1}$. Let denote $b$ the sequence of blow ups. Then $b^*\omega_1$ has a pole of order one along $F_{o_1-1}$. Next, we perform the deformation \ref{bek:5.1.def} at a generic point $q$ of $F_{o_1-1}$.
For the local computation see \ref{ss:move}.

Let $(x,y)$ be some local coordinates of some point on $F_{o_1-1}$  such that $F_{o_1-1}=\{x=0\}$.
Let the local equation of $\omega_i$  be
$(\psi_i(y)+x\tau_i(x,y))dx\wedge dy/x$, with $\psi_1(y) \not\equiv 0$. Consider a generic point $q$ in this interval
and we move the exceptional divisor of the blow up at $q$ as in \ref{bek:5.1.def} and \ref{ss:move}.

Since under this deformation the base points of the two perturbed line bundles must stay common, using  the notation of
Lemma \ref{lem:derivative} we have $y_1(\lambda)=y_2(\lambda)$. Therefore, Lemma \ref{lem:derivative}
applied  for both forms gives $ a_u\cdot \psi_1(q)=b_u\cdot \psi_2(q)$ ($\dagger$), and this holds for any generic point $q$.
Since $\psi_1(q)\not=0$ we get that  $\psi_2(q)\not=0$ as well. In particular, $o_1=o_2$. Furthermore
($\dagger$) says that $a_u\cdot b^*\omega_1-b_u\cdot b^*\omega_2$ has no pole along $F_{o_1-1}$. These facts reinterpreted
in $\tX$ and the exceptional curve  $E_u$ give

\begin{lemma}\label{lem:5..3} (1) If one of the forms $\omega_i$ has a pole along $E_u$ ($u\in\calv$) then both forms
have pole along $E_u$  and the pole orders are the same, say $o_u$.

(2) In the situation of (1), $a_u \omega_1-b_u \omega_2$ has a pole order strict smaller than $o_u$ along $E_u$.

%\marginpar{kell (3) ???}
(3) The above properties (1) and (2) remain true if we replace $(\tX_{top},\tX)$ by another pair obtained by
blowing up generic points of $E$ or singular points of $E$.
\end{lemma}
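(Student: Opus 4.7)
The plan is to verify parts (1) and (2) by running the computation already sketched in \ref{bek:5..3}, and then to establish (3) by checking compatibility of the setup with further blow-ups. First I would justify that the base point must persist under the deformation of \ref{bek:5.1.def}: since the perturbation produces a generic pair $(\tX_{top,\lambda}, \tX_\lambda)$, Proposition \ref{prop:BASE} applies to both $\calL_{1,\lambda}$ and $\calL_{2,\lambda}$, and the combinatorial conditions that forced $p$ to be a common base point (validity of $(*_v)$ together with $(l'_i, E_v) < 0$) are preserved under a topologically trivial deformation. Hence $y_1(\lambda) = y_2(\lambda)$ for $|\lambda|$ small.

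Next I would compute the $F_{o_1-1}$-multiplicity of $b^* l'_{i,top}$. Under the successive generic blow-ups $b = b_{o_1-1} \circ \cdots \circ b_1$ on the chain $E_u = F_0, F_1, \ldots, F_{o_1-2}$, each step satisfies $b_k^* F_{k-1} = F_{k-1} + F_k$ and leaves the strict transforms of other components unchanged; therefore the $F_k$-coefficient of $b^* l'_{i,top}$ equals the $E_u$-coefficient of $l'_{i,top}$, namely $a_u$ for $i=1$ and $b_u$ for $i=2$. Applying Lemma \ref{lem:derivative} to both $b^* \omega_1$ and $b^* \omega_2$ and equating the two derivatives yields the identity $a_u \psi_1(q) = b_u \psi_2(q)$ at a generic $q \in F_{o_1-1}$. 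Since $\psi_1 \not\equiv 0$ by the choice of $o_1$ and $a_u, b_u > 0$, we conclude $\psi_2 \not\equiv 0$, so $b^* \omega_2$ also has a non-trivial pole along $F_{o_1-1}$, i.e.\ $o_2 \geq o_1$; combined with the standing assumption $o_1 \geq o_2$ this proves (1).

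For (2), the identity $a_u \psi_1 \equiv b_u \psi_2$ on $F_{o_1-1}$ means that $a_u b^* \omega_1 - b_u b^* \omega_2 = b^*(a_u \omega_1 - b_u \omega_2)$ has no pole along $F_{o_1-1}$. Reversing the chain of generic blow-ups (each of which reduces the pole order along the newly created $(-1)$-curve by exactly one compared to its parent), we conclude that the pole order of $a_u \omega_1 - b_u \omega_2$ along $E_u$ is strictly less than $o_u$, which is (2).

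For (3), I would check that blow-ups at generic points of $E$ preserve genericity of the pair by Lemma \ref{lem:BLOWUP}, while blow-ups at singular points can be accommodated by the same resolution-theoretic argument used in \ref{bek:5..2} to establish the $\omega$-divisorial reduction. In both cases, the pull-backs of $\calO_{\tX}(-l'_{i,top})$ remain restricted natural line bundles (Lemma \ref{lem:BUnatline}), the forms $b^* \omega_i$ continue to play the role given by Proposition \ref{prop:BASE}, and equality of pole orders together with the cancellation $a_u \omega_1 - b_u \omega_2$ commute with pull-back; thus parts (1) and (2) apply in the new configuration. The main obstacle is the bookkeeping of the coefficients of $b^* l'_{i,top}$ along the chain of newly created divisors (needed to pin down the factors $a_u$ and $b_u$ in Lemma \ref{lem:derivative}) and the verification that the perturbation in (1) produces an honestly generic pair along which Proposition \ref{prop:BASE} can be invoked simultaneously for both line bundles.
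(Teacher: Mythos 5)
Your computational core is the paper's argument: the chain of generic blow-ups bringing the pole of $\omega_1$ down to order one along $F_{o_1-1}$, the bookkeeping showing that the $F_{o_1-1}$--coefficient of $b^*l'_{i,top}$ equals $a_u$ (resp.\ $b_u$), the application of Lemma \ref{lem:derivative} to both pulled-back forms to obtain $a_u\psi_1(q)=b_u\psi_2(q)$ at a generic $q$, and the way you read off (1), (2) and the stability statement (3) from this identity all match what the paper does in \ref{bek:5..3}.

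The genuine problem is your justification of the crucial input $y_1(\lambda)=y_2(\lambda)$. You derive it from the persistence under the deformation of the combinatorial conditions $(*_v)$ and $(l'_i,E_v)<0$. But those conditions only guarantee (via parts {\it (1')--(4')} of Theorem \ref{th:NEW2}, or Proposition \ref{prop:BASE}) that each of $\calL_{1,\lambda}$ and $\calL_{2,\lambda}$ separately has $-(l'_i,E_v)$ base points on $E_v$; they say nothing about the two sets of base points sharing a point. Indeed, the conclusion of Proposition \ref{prop:GENERAL} is precisely that under these same combinatorial hypotheses the base points are generically \emph{not} common, so no combinatorial condition can force coincidence, and your reasoning, taken literally, would prove too much. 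In the paper the coincidence at $\lambda=0$ is the standing counterexample hypothesis of the standard setup \ref{bek:5..1} (assumed, for contradiction, for the \emph{generic} structure); its persistence for $\lambda\neq 0$ comes from the fact that genericity is an open condition, so the deformed pairs $\tX_\lambda\subset \tX_{top,\lambda}$ remain generic for small $\lambda$ and therefore still exhibit the assumed common base point, and continuity of the finitely many base points in $\lambda$ identifies that common point with the continuations $y_1(\lambda)$, $y_2(\lambda)$ of $p$. Without this replacement the identity $a_u\psi_1(q)=b_u\psi_2(q)$ --- the engine of both (1) and (2) --- is unsupported. (A minor additional point: for (2) the exact drop-by-one of the pole order under the blow-ups must also hold for the combination $a_u\omega_1-b_u\omega_2$, which requires the centers $q_i$ to be generic with respect to this form as well; this is harmless since only finitely many genericity conditions are involved, but it should be said.)
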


\bekezdes \label{bek:5..4} {\bf Reduction of $\tX$ to the support of $\omega_i$'s.}
Let $E_{\omega}$ be the union of exceptional divisors $E_u$ with $o_u>0$. A priori it can happen  that $E_\omega$ is
smaller than $E$. Let $E_{\omega,v}$ that connected component of $E_\omega$ which contains $E_v$ (cf. Proposition
\ref{prop:BASE}). Let $\tX_\omega$ be the tubular neighbourhood of $E_{\omega,v}$ in $\tX$. Then we can replace the pair
$(\tX_{top},\tX)$ by $(\tX_{top},\tX_\omega)$, and consider the restriction of the bundles $\calO_{\tX}(-l'_{i,top})$
and forms $\omega_i$ to $\tX_\omega$. Note that  the restrictions of the distinguished sections of
 the bundles $\calO_{\tX}(-l'_{i,top})$ (with transversal smooth divisors as in \ref{ss:BaseLB}) will have similar properties,
 hence the setup of Proposition \ref{prop:BASE} will be satisfied  by the  restricted objects over $\tX_\omega$.
 Furthermore, the restrictions of the $\omega_i$'s will satisfy automatically part {\it (e)}
 of Proposition \ref{prop:BASE} too,
 hence the point $p$ survives as a common base point.
 Hence, in this way we get a situation of the two bundles with  such that $E(\tX_\omega)=E_\omega$. Even more,
 using the step \ref{bek:5..1} we can reduce $\tX_{top}$ too to the neighbourhood of $\cup E_u$, where the union is over
 $\{u\in\calv_{top},\ E_u\cap E_\omega\not=\emptyset\}$.

 Hence at this point we have a situation of the `standard setup', which satisfies the
 `$\omega$--divisorial property, Lemma \ref{lem:5..3}, and $E=E_\omega$.

\bekezdes \label{bek:5..5} {\bf The proportionality of $\{a_u\}_u$ and $\{b_u\}_u$, $u\in \calv$.}
Recall that $l'_{1, top} = \sum_{u \in \calv_{top}} a_u E_u$ and $l'_{2, top} = \sum_{u \in \calv_{top}} b_u E_u$
where  $a_u,\, b_u\in \Q_{> 0}$.
 Furthermore, $b_v>a_v$, where $E_v$ is the divisor, which contains the common base point $p$.
 We claim that  $b_u/a_u=b_v/a_v$ for any $u\in \calv$.

 Since $\Gamma(\tX)$ is connected, it is enough to verify that $b_u/a_u=b_w/a_w$
 for any edge $(u,w)$ of $\Gamma$.
 Consider an intersection point $q=E_u\cap E_w$,  and local coordinates $(x,y) $  at $(\tX,q)$ such that
$\{x=0\}=E_u$ and $\{y=0\}=E_w$, as in \ref{bek:5..2}.
 Let the local equation of $\omega_i$  be $\varphi_i(x,y)dx\wedge dy/x^cy^d $, with
$c,d\in \Z_{>0}$, and $x\nmid\varphi_i$ and  $y\nmid\varphi_i$ ($i=1,2$). By the `$\omega$--divisorial property'
$\varphi_i(0,0)\not=0$. Using Lemma \ref{lem:5..3}{\it (2)} for $E_u$ we obtain that
$x| a_u \varphi_1 -b_u\varphi_2$, hence $ a_u \varphi_1(0,0) -b_u\varphi_2(0,0)=0$.
Similarly, for $E_w$,  $ a_w \varphi_1(0,0) -b_w\varphi_2(0,0)=0$. Hence $b_u/a_u=b_w/a_w$.

\bekezdes\label{bek:setu}
 Next, we continue the study of other edges $(u,w)$ of type $u\in\calv$, $w\in\calv_{top}\setminus \calv$.
Fix such an edge and set $q=E_u\cap E_w$. By  local verification it turns out that after several convenient
blow ups  at infinitesimally close points of $q$, in the new situation
$b:\tX_{top, b}\to \tX_{top}$, $u'\in \calv_b$, $v'\in \calv_{top,b}\setminus \calv_b$, we will have
$o_{u'}=1$ and the following local picture: If $(x,y)$ are local coordinates at $q'=E_{u'}\cap E_{w'}$
with local equations $\{x=0\}=E_{u'}$, $\{y=0\}=E_{w'}$ then $b^*\omega_i=\varphi_i (x,y)y^{k_i}dx\wedge dy/x$ for
certain $k_i\geq 0$, and $\varphi_i(0,0)\not=0$. [The divisor $y^{k_i}$ of $b^*\omega_i$ cannot be eliminated:
the shape of the form $y^kdx\wedge dy/x$ stays unstable with respect to blow up.]

Therefore, assume that in our setup this property is also satisfied, and we also return to our
simplified notations: $\calv_{top}$, $\calv$, $\omega_i$, etc.

\bekezdes \label{bek:pole}
Assume that $q$ is such an intersection point $E_u\cap E_w$, $u\in\calv$, $w\in\calv_{top}\setminus\calv$
and $\omega_i=\varphi_i (x,y)y^{k_i}dx\wedge dy/x$ (cf. \ref{bek:setu}),
where at leats one of the $k_i$'s is zero. Then, by Lemma
\ref{lem:5..3} {\it (2)}, $x| a_u \varphi_1(x,y)y^{k_1}-b_u \varphi_2(x,y)y^{k_2}$. This implies that
both $k_i$ are zero and $a_u\varphi_1(0,0)=b_u\varphi_2(0,0)$.

On the other hand, if we perform the deformation \ref{bek:5.1.def}  at $q$ by moving $E_w$, then, by
Lemma \ref{lem:derivative},  the base points $y_i(\lambda)$ of the `moved'  line bundles satisfy
$\frac{d}{d\lambda}\big|_{\lambda=0}y_i(\lambda)=4\pi^2\varphi_i(0,0)\cdot \{E_w$--coefficient of $l'_{top,i}\}$. Since the base points must stay together, we get that $a_w\varphi_1(0,0)= b_w\varphi_2(0,0)$. In particular,
$b_w/a_w=b_u/a_u=b_v/a_v$.  This shows that if for a pair  $u\in\calv$, $w\in\calv_{top}\setminus\calv$
we have $b_w/a_w\not =b_v/a_v$, then $o_u=1$, and both  forms $\omega_i$ necessarily must  have divisors at $q=E_u\cap E_w$.
(Note that in such a case, if we perform the above deformation at $q$, then for both line bundles the
tangent vector of the moving base point is zero.)
Set $$\cale:=\{(u,w)\,:\, u\in\calv, \ w\in\calv_{top}\setminus\calv, \ b_w/a_w>b_v/a_v\}.$$

\bekezdes We claim that (under the assumption of a common base point as above)
we can assume that $\cale\not=\emptyset$. Indeed, otherwise,  $b_u/a_u=b_v/a_v$  for all $u\in \calv$ (cf. \ref{bek:5..5}),
and $b_w/a_w\leq b_v/a_v$ for all $w\in \calv_{top}\setminus \calv$. This reads as
 $l'_{2, top} = l'_{1, top} \cdot c + \sum_{w \in \calv_{top} \setminus \calv} r_w \cdot E_w$, where $c:=b_v/a_v>1$ and
$r_w \leq 0$ for all $w$.
By cohomological restriction we get that $l'_2 = l'_1 \cdot c + l'_3$ for some  $l'_3 \in \calS'$.

On the other hand,
we know that there exists  a cycle $l_2 \in L_{>0}$, such that $l_2 \geq E_v$ and $\chi(l'_2 + l_2) = \chi(l'_2) + 1$,
or, equivalently,  $- (l'_2, l_2) + \chi(l_2) = 1$. Note also that $l'_1\in\calS'$ and $(l'_1,E_v)<0$ (cf.
Lemma \ref{lem:BASE}) imply that $(l'_1, l_2)\leq (l'_1, E_v)<0$. Therefore,
 $-(l'_2, l_2) = - c \cdot (l'_1, l_2) - (l'_3, l_2)$
 yields $ -(l'_2, l_2) >  -(l'_1, l_2)$, which means $\chi(l'_1+ l_2) < \chi(l'_1) + 1$, or $\chi(l'_1 + l_2) \leq \chi(l'_1)$.  But this contradicts the fact, that the Chern class $l'_1$ is dominant.

 \bekezdes\label{bek:alter0}
  Therefore,  if there exists a counterexample to the statement then necessarily $\cale\not=\emptyset$.
 Let us fix such a counterexample $(\tX,\tX_{top}, \omega_i, m_i=l_{min,i}, ...)$ with common base point $p\in E_v$, and
 with all the additional properties determined in the previous paragraphs. We will assume that $\cale$ is minimal.
  We fix  $(u,w)\in\cale$ ($u\in\calv$, $w\in\calv_{top}\setminus\calv$) and write   $q:=E_u\cap E_w$.
%and $\omega_i=\varphi_i (x,y)y^{k_i}dx\wedge dy/x$ near $q$, where $k_i>0$.
From these data  we will  construct another counterexample, which  contradicts a   necessary property of any
counterexample discussed  in \ref{bek:pole}. %, or the minimality of $\#\cale$.
Hence,  we conclude that  counterexamples  do not exist.

(I) \ The  construction starts as follows.
 Let us blow up $q$, let $E_q$ be the new exceptional divisor. We denote the strict transforms of
$\{E_i\}_{i\in \calv_{top}}$ by the same symbols and we set $\calv_q=\calv$, $\calv_{top,q}=(\calv_{top}\setminus w)
\cup \{v_q\}$, where $v_q $ indexes $E_q$. Define  also $\tX_q$ and $\tX_{top,q}$  as the tubular neighborhoods of
$\cup_{j\in\calv_q}E_j$ and $\cup_{j\in\calv_{top,q}}E_j$.
The new Chern classes are obtained from $b^*(l_{top,i})$  by cohomologically restriction to $L'(\tX_{top,q})$. This means that
the coefficients $(a_j,b_j)$ stay unmodified for $j\not=v_q$ and the $E_q$--coefficients become
$(a_q,b_q)=(a_w+a_u, b_w+a_u)$. (This increase will be exploited later.)

 Then the form $\omega_i$  is  replaced by the restriction to $\tX_q$ of $b^*\omega_i$: it has the very same pole,
 type of divisors and local behaviour near $p$  as $\omega_i$. In particular, the equivalent conditions of
 Proposition \ref{prop:BASE} still hold in the new situation and the two bundles will have a common
 base point at $p$.

 Note also  that the self--intersection of $E_u$ decreases by one, hence the RR expression $\chi$ is modified.
 Let the RR expression of $L'(\tX_q)$ be denoted by $\chi_q$  and let us analyse the new minimal cycles
 $m_{ i, q}\in L(\tX_q)$ (their existence is guaranteed by Proposition \ref{prop:BASE}, see also \ref{ss:pole}).

 We claim that $m_{i,q}=m_i$. Indeed, let us denote the $E_u$--multiplicity of $m_{i,q}$ by $k_i$.
  By  (\ref{eq:pole}) $k_i\geq 1$.
 Then we will test $m_{i,q}$ in the context of the old situation:  we compute
 $\chi(l'_{i}+m_{i,q})-\chi(l'_{i})=\chi(m_{i,q})-(l'_{i}, ,m_{i,q})$.
 First, $  (l'_{i}, m_{ i,q})=(b^* l'_{top,i}, m_{ i,q})=(l'_{i,q}, m_{ i,q})$.
 Moreover, $\chi(m_{i,q})=\chi_q(b^*m_{i,q})=
 \chi_q(m_{i,q}+k_iE_q)=\chi_q(m_{i,q})-k_i(k_i-1)/2$.
 Since by Proposition \ref{prop:BASE} (applied for both cases)
 we get  $\chi_q(m_{i,q})-(l'_{i,q},m_{ i,q})= 1\leq  \chi(m_{i,q})-(l'_{i}, m_ {i,q})$, or
 $-k_i(k_i-1)\geq 0$, hence $k_i=1$.

 In particular, by the above computation $\chi(m_{i,q})-(l'_{i}, m_{i,q})=1$ too, hence
 by the minimality of $m_i$ we obtain  $m_i\leq m_{i,q}$, and the $E_u$--coefficient of $m_i$ is also one.
 (This is `compatible' with the fact that the pole of
 $\omega_i$ along $E_u$ is one, cf. (\ref{eq:pole}) and the comment after it.)
 Next, we start with $m_i $,  then a similar computation as above
 (and  using the fact that its $E_u$--coefficient is one) shows that
 $\chi_q(m_i)-(l'_{i,q},m_i)=1$ too, hence by the minimality of $m_{i,q}$ we get $m_{i,q}\leq m_i$  as well.

\vspace{1mm}

(II) \ We denote $E_u\cap E_q$ again by $q$, and we repeat the blow up from step (I)
 $N$ times, where $N\gg 0$ will be identified later.  In the notations we will adopt
 the previous package of notations of (I), with the difference that we write
indices $N$ instead of $q$. In particular, if $E_N$ is the very last exceptional divisor created  in the last step,
neighbour of $E_u$, then the Chern class $E_N$--multiplicities become $(a_N,b_N)=(a_w+Na_u,b_w+Nb_u)$.
We emphasize again  that the poles of the forms  and the minimal cycles $m_{i,N}=m_i$  are $N$--independent.

\vspace{1mm}

(III) \ If $N$ is very large  then $(E_u,E_u)$ becomes very small, hence
by the adjunction formula we get that the $E_u$ coefficient of $\lfloor Z_K\rfloor$ is $\leq 1$.
Recall that   for any $Z'\in L_{>0}$,
 $Z'\geq \lfloor Z_K\rfloor$ we have
 $H^0(\tX\setminus E, \Omega^2_{\tX})=H^0(\tX, \Omega^2(Z'))$ (hence any form has pole order $\leq 1$ along $E_u$)
 and $H^1(\tX,\calO_{\tX})=H^1(Z',\calO_{Z'})$ (cf. \ref{ss:LauferD}). Since the pole of $\omega_{i,N}$ along $E_u$ is one,
 we get that the $E_u$--coefficient of $\lfloor Z_K\rfloor$ is in fact one, and (by taking restriction)
 we can assume that the $E_u$-- coefficient of $Z$ is one too.

(IV) \ Next we perturb the Chern--class contributions $a_NE_N$ (resp. $b_NE_N$) in the restricted line bundles
$\calL_{1,N}=\calO_{\tX_{N}}(-\sum_{j\not=v_N} a_jE_j- a_NE_N)$ and
 $\calL_{2,N}=\calO_{\tX_{N}}(-\sum_{j\not=v_N} b_jE_j- b_NE_N)$
via Lemma  \ref{rat}.
%First note that all the relevant cycles (responsible for the base points and their characterizations),
%e.g. the poles of $\omega_{i,N}$ and the minimal cycles $m_{i,N}$ have multiplicity one along $E_u$.
%Hence, if we replace our cycle $Z\gg0$ by a cycle $Z$ with multiplicity one along $E_u$ (but all other multiplicities
%very large) then the restriction stays as a counterexample
%with a common base point of the bundles.
%Recall that our reduced natural line bundles are
%$\calL_{1,N}=\calO_{\tX_{N}}(-\sum_{j\not=v_N} a_jE_j- a_NE_N)$ and
% $\calL_{2,N}=\calO_{\tX_{N}}(-\sum_{j\not=v_N} a_jE_j- a_NE_N)$.
%Though wish to deform them via Lemma \ref{rat},
However, we wish to preserve the common base point $p$, hence we apply Lemma \ref{rat} only after
we perform the following steps.

We blow up $B:\tX_{N,p}\to \tX_N $ at $p$, we set $Z_p:=B^*Z$, and
$B^{-1}(p)=E^{new}$.
Note that  $H^0(Z_p, B^*\calL_{i,N}(-E^{new}))_{{\rm reg}}\not=\emptyset$. Furthermore, decompose these line bundles
as a combination of the rational line bundles: $B^*\calL_{1,N}(-E^{new})=\mathfrak{L}'_1\otimes \calO_{Z_p}(-a_NE_N)$ and
 $B^*\calL_{2,N}(-E^{new})=\mathfrak{L}'_2\otimes \calO_{Z_p}(-b_NE_N)$.
 Recall that the analytic structure on $\tX_{N}$ is generic, hence the point $q=E_u\cap E_N$ is generic on $E_u$.
 Next, we choose a  sufficient large number $k$ with $k< a_N$ and  $k<b_N$, and generic points
 $q_1,\ldots, q_{k+1}\in E_u$, and  we apply Lemma \ref{rat} for both $\mathfrak{L}_{i}'$.

Let us write $\calG_{1,N}=\calO_{\tX_N}(-\sum_{j\not=v_N}a_jE_j-\sum_{1\leq j\leq k+1} r_{1,j}q_j)$ and
 $\calG_{2,N}=\calO_{\tX_N}(-\sum_{j\not=v_N}b_jE_j-\sum_{1\leq j\leq k+1} r_{2,j}q_j)$.
 That is, $B^*\calG_{i,N}(-E^{new})=\mathfrak{L}'_i\otimes \calO_{Z_p}(-\sum_{1\leq j\leq k+1} r_{i,j} q_j )$ for $i=1,2$.
 Recall that $r_{1,j}=r_{2,j}=1$ for $j\leq k$.  Then Lemma \ref{rat} says that
 $H^0(Z_p, B^*\calG_{i,N}(-E^{new}))_{{\rm reg}}\not=\emptyset$, $i=1,2$.

 Let us list some properties of the bundles $\calG_{i,N}$. They are restrictions of  natural line bundles from a generic top--level.  Indeed,
 since the $E_u$--multiplicity of $Z_p$ is one, the generic points $q_j$ can be considered as restrictions of transversal
 generic curves meeting $E_u$ at the points $q_j$. Furthermore, one can show that this can be completed to a top level
 graph $\Gamma_{N,top}$, which  admits a certain $l'_{N,top}\in L'(\Gamma_{N,top})$ as extension of this set of multiplicities.
 Next, for each $i=1,2$ independently,
 the Chern class of $\calL_{i,N}$ and $\calG_{i,N} $ agree in $L'(\tX_N)$. Since the Chern class $l'_{i,N}$ satisfies
 property $(*_v)$, the Chern class of  $\calG_{i,N}$ must satisfy too. In particular, $\calG_{i,N}$ satisfies part {\it (4')}
 of the main Theorem \ref{th:NEW2}, that is, it has $-(l'_{i,N}, E_v)$ base points on $E_v$,
 and these base points are exactly at the zeros of a section (basically unique), which has no fixed components.
 Since   $H^0(Z_p, B^*\calG_{i,N}(-E^{new}))_{{\rm reg}}\not=\emptyset$, this section vanishes at $p$, hence
 $p$ is a base point of both $\calG_{i,N}$, cf. \ref{rem:*}(a).
 Hence in this situation we have again a common base point of two
 restricted natural line bundles. In particular, each line bundle (and the base point $p$) determines
  a form $\omega_{\calG,i}$ by Proposition \ref{prop:BASE}. Furthermore,
  for this pair of bundles all the additional properties listed in
 \ref{bek:5..5}--\ref{bek:pole} hold. Note that the ratio of the Chern class multiplicities at the points $q_j$ ($j\leq k$)
 are $r_{2,j}/r_{1,j}=1/1=1$, which differ from $b_u/a_u=b_v/a_v>1$. Hence, by \ref{bek:pole} at each  point
 $q_j $   ($j\leq k$) both forms must have a divisor.

\bekezdes \label{bek:alter}
 Well,  this fact will lead to a contradiction. The point is that the number of divisors is bounded
 independently of $N$, and the number $i$ can be arbitrary large when $N$ is large.
 Indeed, first notice that the minimal cycle $m_i$, which satisfies property $(*_v)$ is the same for
  $\calG_{i,N}$ and $\calL_{i,N}$. Thus,
  (\ref{eq:pole}) shows  that the common pole  $P$ of $\omega _{\calG,i}$ (cf. Lemma \ref{lem:5..3})
  is $\leq m_i$. If $E_u$ is not in the support of $P$, then we can reduce the situation to a smaller resolution space
  as in \ref{bek:5..4}, but in this way we eliminate from $\cale$ the point $q$, hence we decrease $\#\cale$.
  But $\#\cale$ was taken minimal, hence necessarily $E_u$ is in the support of $P$ with multiplicity one (since $E_u$
  has multiplicity one in $m_i$).

   Write $P=E_u+P'$.
If $D_i$ is the union of the divisors of $\omega_{\calG,i}$ along $E_u$, then $(D_i-P-K_{\tX_N}, E_u)=0$.
Or, $(D_i, E_u)=(P'+E_u+K_{\tX_N}, E_u)=(P',E_u)-2\leq (m_i-E_u, E_u)-2$, an
$N$--independent bound. Hence  $i$  can be taken larger than the
possible number
of the divisors of the forms.

This ends the proof of part {\it (5')}.

\begin{remark} In the above proof, in order to get a contradiction,
in \ref{bek:alter}
we used the structure of the divisors  of the forms.
An alternative argument, which replaces \ref{bek:alter},  might run as follows.

Consider the situation from the end of \ref{bek:alter0}, in particular the bundles $\calG_{i,N}\in
{\rm Pic}(\tX_N)$.  Set $W:=Z-E_u$. Note that $W$ is supported on $\cup_{i\in\calv\setminus u}E_i$, and all its coefficients are large.
We claim that if the pair $\{\calG_{i,N}\}_{i=1,2}$ is a counterexample (for any choice of the points of $q_{i,j}$'s)
then the pair  $\{\calG_{i,N}|_W\}_{i=1,2}$ is a counterexample too. However, by taking this restriction, the set $\cale$
decreases, a fact which contradicts its minimality. In order to verify the claim, the verification of the
requirements regarding the Chern classes are immediate. What should be explain is the fact that  $\{\calG_{i,N}|_W\}_{i=1,2}$
have  a common  base point at (the very same) $p$.

Fix $i\in \{1,2\}$. Then  note that the restrictions bundles  $\calG_{i,N}|_W$
 do not  depend on the choice of the generic points $q_{i,j}$ on $E_u$.
Next,  consider the restriction maps $r_i : \pic^{l'_{i, top}}(Z) \to \pic^{l'_{i, top}}(W)$.
If the integer $M$ is large enough, then by \cite[Th. 6.1.9]{NNI}
the image of the Abel map $ c^{-M E_{u}^*}(Z) : \eca^{-M E_{u_2}^*}(Z) \to \pic^{-M E_{u}^*}(Z) $ is an affine space of dimension $h^1(\calO_Z) - h^1(\calO_W)$, the dimension of the fibers of $r_i$. Here we take $M=a_N$ if $i=1$ and
$M=b_N$ if $i=2$.
Thus, when  we move the generic points $q_{i,j}$,  the line bundle $\calG_{i,N}$ cover an open set in $r_i^{-1}(\calG_{i,N}|_W)$. Since $\calG_{i,N}$  (for any choice of $q_j$) has a  base point at $p$, we get that the
generic bundle in $r_i^{-1}(\calG_{i,N}|_W)$ has a base point at $p$.

Now assume that $\calG_{i,N}|_W$ has no base point at $p$. Then the divisor ${\rm div}(s)$ of its   generic section
$s$ is not supported at $p$. Let $D$ be ${\rm div}(s)$ completed by $M'=-(l'_{i,top}, E_u)$ generic points on $E_u$.
Then, when we move $s$ and the $M'$  generic points,  $\calO_Z(D)$ covers an open set in
$r_i^{-1}(\calG_{i,N}|_W)$.
% (the proof is similar as the
%proof of \ref{bek:5.1.3}--\ref{bek:5.1.5}).
 But, by construction, this
bundle has a section which does not vanish at $p$, a fact which contradicts the previous paragraph.
\end{remark}

\section{Examples}\label{s:examples}

Below $\tX$ is a normal surface singularity whose  link is a rational homology sphere.

\subsection{The case of Chern class  $l'=Z_K$}\label{ss:zk} Let us fix a resolution graph $\Gamma$.
If $l'\in L'$ is `sufficiently negative' (i.e., if each  $(l', E_v)$ is sufficiently negative for all $v\in \calv$)
then {\it for any analytic structure supported by $\Gamma$ any line bundle $\calL\in \pic(\tX)$ with Chern class $l'$}
 is base point free; in particular,
$l'\in \calS_{an}'\setminus \{0\}$ too. For different negativity conditions (imposed by different proofs)
see e.g. \cite[Th. 4.1]{CNP},
\cite[Th. 3.1]{LaW}, \cite[Th. 2, Prop. 4]{S-B}.
The condition $l'\in \calS_{an}'\setminus \{0\}$ (versus base point freeness)
can be guaranteed by weaker assumptions, in general
we require slightly stronger  negativity than being in $Z_K+\calS'$.
 However, none of these  combinatorial assumptions are  satisfied in general
by $Z_K$. In the next paragraphs we analyse with details exactly this case of $l'=Z_K$.

Assume that $\tX$ is minimal, i.e. it contains no $(-1)$--curve. Then, by adjunction formula, $Z_K\in\calS'$.
(Recall also that $Z_K=0$ happens exactly when $\Gamma$ is ADE.) We claim that {\it  if
$\tX$ is generic and $\Gamma$ is not ADE  then $Z_K\in \calS'_{an}\setminus \{0\}$} (that is,
$\calO_{\tX}(-Z_K)$ has no fixed components).

In the proof  we use \ref{th:OLD}{\it (f)}: we need to show that
$\chi(Z_K+l)>\chi(Z_K)$ for any $l>0$. This reads as  $\chi(-l)>0$. Note that from $-l$ there exists
$\chi$--nonincreasing generalized  Laufer computation sequence which connects $-l$ to 0, cf. \cite[\S 7]{NOSZ}
or \cite[4.3.3]{trieste}.
Hence $\chi(-l)\geq 0$ (see also \cite[Prop. 5.7]{NOSZ}).
However, if $\chi(-l)=0$, then  the sequence is necessarily $\chi$--constant, hence at the very last step one has
$\chi(-E_u)=0$ for some $u\in \calv$. But this means $E_u^2=-1$, a contradiction.

Note that for an arbitrary analytic structure it is not true that $Z_K\in \calS'_{an}\setminus \{0\}$,
%even if the link is an integral homology sphere (or, if the analytic structure is Gorenstein of
%hypersurface --- in fact, as we will see,
%these analytic properties even obstruct the possibility $Z_K\in \calS'_{an}\setminus \{0\}$);
cf. next example.

\begin{example}\label{ex:TWOBASEDIFF}
Consider  the following  $\Gamma$,
where the $(-2)$--vertices are unmarked.

\begin{picture}(200,50)(-20,0)
%\put(10,30){\circle*{4}}
%\put(180,30){\makebox(0,0){$\cdots$}}
%\put(70,40){\makebox(0,0){\small{$-2$}}}
%\put(90,40){\makebox(0,0){\small{$-1$}}}
\put(170,30){\circle*{4}}\put(190,30){\circle*{4}}\put(210,30){\circle*{4}}\put(230,30){\circle*{4}}
\put(150,30){\circle*{4}}
\put(70,30){\circle*{4}}
\put(210,40){\makebox(0,0){\small{$-3$}}}
%\put(200,40){\makebox(0,0){\small{$-2$}}}
%\put(130,40){\makebox(0,0){\small{$-2$}}}\put(150,40){\makebox(0,0){\small{$-2$}}}
\put(230,20){\makebox(0,0){\small{$E_1$}}}
\put(210,20){\makebox(0,0){\small{$E_2$}}}
%\put(190,20){\makebox(0,0){\small{$E_3$}}}
%\put(80,10){\makebox(0,0){\small{$-3$}}}
\put(90,30){\circle*{4}}
\put(110,30){\circle*{4}}
\put(130,30){\circle*{4}}
%\put(150,30){\circle*{4}}
\put(110,10){\circle*{4}}
\put(70,30){\line(1,0){160}}\put(110,10){\line(0,1){20}}
%\put(190,30){\line(1,0){10}}
\end{picture}

It is an elliptic (integral homology sphere) graph  $Z_{min}=E_1^*$ and $Z_K=E_2^*$, $Z_{min}<Z_K$.
The  length of the elliptic sequence is two
(for terminology see e.g.  \cite{Laufer77,weakly,Nfive}), hence $1\leq p_g\leq 2$, and $\Gamma$
 supports  two rather different families  of  analytic structures according to the value of $p_g$.
E.g. $\Gamma$   can be realized even by the hypersurface singularity
$x^2+y^3+z^{11}=0$. In this case $Z_{max}=Z_{min}=E_1^*$, it is the divisor of $z$. In fact, $p_g=2$,
${\rm mult}(X,o)=2$  and
$Z_{max}=Z_{min}$ is true for any Gorenstein structure, cf. \cite{weakly,Nfive}.
However, if $Z_{max}=Z_{min}$ then $Z_K\not\in \calS_{an}$. More precisely, by a topological argument on this $\Gamma$,
(and for any analytic structure supported on this $\Gamma$)
$Z_{min}$ and $Z_K$ cannot by simultaneously elements of $\calS_{an}$. Indeed, if both are realized by some functions,
say $f$ and $g$,  then (since $-(Z_K,Z_{min})=1$) the degree of the map $(f,g):(X,o)\to (\C^2,0)$ is one.
But this  can occur only for smooth germs $(X,o)$, which is not the case.

However, as we already proved in \ref{ss:zk}, for the generic analytic structure
$Z_{max}=Z_K$ (hence $Z_{min}\not\in \calS_{an}$). (In this case $p_g=1$ by Theorem \ref{th:OLD}{\it (c)}
and $(X,o)$ is non--Gorenstein \cite[6.9]{NNII}.)

Since  $(Z_K,E_2)=-1$ and $\chi(Z_K+Z_{min})=1=\chi(Z_K)+1$,
by Theorem \ref{th:NEW2} $\calO_{\tX}(-Z_K)$ has a (unique) base point on $E_2$.
Note that $Z_K+Z_{min}\in\calS_{an}$ is the Chern class $l'+l$ of part {\it (5')} in  Theorem \ref{th:NEW2}.
Furthermore, $(Z_K+Z_{min},E_2)=-1$ and $\chi(2Z_K)=2=\chi(Z_K+Z_{min})+1$, hence
$\calO_{\tX}(-Z_K-Z_{min})$ has a (unique) base point on $E_2$ too. However,
by Theorem \ref{th:NEW2}, the two base points are different.
Note also that ${\rm mult}(X,o)=-Z_{max}^2+1=3$.
\end{example}

\subsection{Base points of $Z_{max}$ and the multiplicity in the generic elliptic case.} \
Assume that $\min\chi=0$. In this case $\Gamma$ is either rational or elliptic (see e.g. \cite{Nfive}).
In the rational case   $Z_{max}=Z_{min}$,
$\calO_{\tX}(-Z_{max})$  has no base points, and  ${\rm mult}(X,o)=-Z_{min}^2$
independently of the analytic structure supported on $\Gamma$ \cite{Artin62,Artin66}
(see also \ref{ss:proofrat} with the compatibility with our criterions).

In the sequel we assume that $\Gamma$ is elliptic.
For the simplicity of the presentation we also assume that $\Gamma$ is numerically Gorenstein (i.e.
$Z_K\in L$), and that $\Gamma$ is the dual graph of a minimal good resolution, which is minimal
(contains no $(-1)$--curves). Let $C$ be the minimally elliptic cycle (for the standard notations and  combinatorial
properties of elliptic graphs see e.g. \cite{Laufer77,weakly,Nfive}).
We claim that following facts hold, whenever $\tX$ is {\it generic}:

{\it (1) $Z_{max}=Z_K$.

(2) $\calO_{\tX}(-Z_{max})$ has a base point if and only if $C^2=-1$. Moreover, if $C^2=-1$ then
$\calO_{\tX}(-Z_{max})$ admits a unique base point (of type $A_1$).
(For the peculiar structure of the graph when $C^2=-1$
and the position of the base point see the discussion below.)}
%Hence ${\rm mult}(X,o)=-Z_K^2+1$. If   $C^2\not=-1$ then ${\rm mult}(X,o)=-Z_K^2$.}

We sketch the arguments.
For
{\it (1)} we use Theorem \ref{th:OLD}{\it (g)} and we verify that $Z_K=\max{\mathcal M}$.  Indeed,  $\chi(Z_K)=0$ and
$\chi(Z_K+l)=\chi(-l)>0$ for any $l>0$ (cf. \ref{ss:zk}).

For {\it (2)} fix some $E_v$ such that $(Z_K,E_v)<0$  and $\chi(Z_K+E_v+l)=1$  for some $l>0$.
Then $1=\chi(E_v+l)-(Z_K,E_v)-(Z_K,l)$ with $\chi(E_v+l)\geq 0$ (ellipticity), $-(Z_K,E_v)>0$ (assumption),
$-(Z_K,l)\geq 0$ ($Z_K\in\calS$). Hence necessarily (a) $\chi(E_v+l)=0$, (b) $(Z_K,E_v)=-1$, (c) $(Z_K,l)=0$.
From (b) follows that $E_v^2=-3$, from (c) we obtain that $(Z_K,E_w)=0$  for any on $E_w$ from
 the support $|l|$,  hence $|l|$ consists of $(-2)$ curves (in particular $E_v\not\in |l|$), and (a) implies that $(l, E_v)=\chi(l)+1\geq 1$, hence $E_v$ is adjacent with $|l|$.
%, and the adjacent vertex of $l$ has multiplicity one.
Since $\chi(E_v+l)=0$, by the definition of $C$, one has $E_v+l\geq C$.
Since $C$ cannot have only $(-2)$--curves ($\dagger$)  $E_v\leq C\leq E_v+l$.
In particular,  $E_v$ is uniquely determined by this property.

Hence, by ($\dagger$),  $C$ itself has the form
$E_v+l_0$, where $E_v\not\in|l_0|$, $E_v$ is adjacent to $|l_0|$, and $|l_0|$ consists of $(-2)$ curves.
Then $l_0$ verifies (a)-(b)-(c), i.e. $\chi(Z_K+E_v+l_0)=1$ (and $l_0$ is minimal with this property).
Since by general theory $C^2=(C,Z_K)$,
$C^2=(C, Z_K)=(E_v+l_0,Z_K)=-1$.

All the possible graphs of elliptic cyles $C$  with $C^2=-1$ are listed in \cite{Laufer77}.
%(since in the minimally elliptic case $C=Z_{min}=Z_K$).

%Next we have to find the $E_v$--multiplicity of the  minimal element $s\in\calS_{an}$ with $s\geq Z_K+E_v$.
%Note that $Z_K+Z_{min}\in\calS_{an}$ (use Theorem \ref{th:OLD}{\it (f)}). Hence
%$Z_K+E_v\leq s\leq Z_K+Z_{min}$, hence  $1\leq t(p)\leq m_{E_v}(Z_{min})$. We claim that the
%$E_v$--multiplicity $m_{E_v}(Z_{min})$ of $Z_{min}$ is 1. For this consider the Laufer algorithm for $Z_{min}$
%\cite{Laufer72}. As an intermediate step we get $C$, which already has $\chi=0$, hence in all the next steps we
%cannot have $\chi$-jump. Using this (and the shape of $C$) one verifies that in all other steps we do not add
%base elements to $C$, hence the $E_v$--multiplicity stays 1.

Finally observe that if for a generic singularity with arbitrary graph $\Gamma$, if
 $Z_K=Z_{max}$ then by Theorem \ref{th:OLD}{\it (g)} $\Gamma$ is necessarily elliptic
 (hence {\it (1)} above is an `if and only if' characterization).

%\subsection{Base points of $Z_{max}$ and the multiplicity in the case $\min\chi=-1$ ($\tX$ generic).} \label{ss:exminchi} \
%
%By Theorem \ref{th:OLD}{\it (g)} $Z_{max}=\max\calm$, where $\calm=\{l\in L\,:\, \chi(l)=-1\}$.
% One sees that $Z_{max}<Z_K$ (use e.g. \ref{ss:zk}). Fix some $E_v$ with $(Z_{max},E_v)<0$. If $Z_K\geq E_v+Z_{max}$ then
%via $\chi(Z_K)=0=\chi(Z_{min})+1$, we get that $\calO_{\tX}(-Z_{max})$ has $-(Z_{max},E_v)$ base points on $E_v$.
 %
% Next assume that $E_v$ is not in the support of $Z_K-Z_{max}$. The we claim that

\begin{example} Consider the following non--elliptic plumbing graph.
It has $\min \chi =-1$. It supports several
analytic structures, the possible values for the geometric genus are $1-\min\chi=2\leq p_g\leq 3$, cf. \cite{NO17}.
If $\tX$ is generic then $p_g=2$ and $Z_{max}=2E_v^*$. %  is the cycle  on the right.

\begin{picture}(300,45)(0,0)
\put(125,25){\circle*{4}}
\put(150,25){\circle*{4}}
\put(175,25){\circle*{4}}
\put(200,25){\circle*{4}}
\put(225,25){\circle*{4}}
\put(150,5){\circle*{4}}
\put(200,5){\circle*{4}}
\put(125,25){\line(1,0){100}}
\put(150,25){\line(0,-1){20}}
\put(200,25){\line(0,-1){20}}
\put(125,35){\makebox(0,0){\small{$-3$}}}
\put(150,35){\makebox(0,0){\small{$-1$}}}
\put(175,35){\makebox(0,0){\small{$-13$}}}
\put(200,35){\makebox(0,0){\small{$-1$}}}
\put(225,35){\makebox(0,0){\small{$-3$}}}
\put(160,5){\makebox(0,0){\small{$-2$}}}
\put(210,5){\makebox(0,0){\small{$-2$}}}
\put(175,15){\makebox(0,0){\small{$E_v$}}}
%\put(100,15){\makebox(0,0){\small{$\Gamma_1:$}}}

%\put(300,15){\makebox(0,0){\small{$\Gamma_2:$}}}
%\put(325,25){\circle*{4}}
%\put(350,25){\circle*{4}}
%\put(375,25){\circle*{4}}
%\put(400,25){\circle*{4}}
%\put(425,25){\circle*{4}}
%\put(350,5){\circle*{4}}
%\put(400,5){\circle*{4}}%\put(375,5){\circle*{4}}
%\put(325,25){\line(1,0){100}}
%\put(350,25){\line(0,-1){20}}%\put(375,25){\line(0,-1){20}}
%\put(400,25){\line(0,-1){20}}
%\put(325,35){\makebox(0,0){\small{$4$}}}
%\put(350,35){\makebox(0,0){\small{$12$}}}
%\put(375,35){\makebox(0,0){\small{$2$}}}
%\put(400,35){\makebox(0,0){\small{$12$}}}
%\put(425,35){\makebox(0,0){\small{$4$}}}
%\put(360,5){\makebox(0,0){\small{$6$}}}%
%\put(385,5){\makebox(0,0){\small{$1$}}}
%\put(410,5){\makebox(0,0){\small{$6$}}}
%\put(375,15){\makebox(0,0){\small{$E_v$}}}
\end{picture}

There is only one $E_u$ with $(E_u,Z_{max})<0$, namely $E_v$, and $(E_v,Z_{max})=-2$. Moreover, $Z_K\geq E_v+Z_{max}$ and
 $\chi(Z_K)=0=\chi(Z_{max})+1$. Hence $Z_{max}$ has two base points on $E_v$ and
 ${\rm mult}(X,o)=-Z_{max}^2+2=6$.
(This is compatible with \cite{NO17}.)

We wish to emphasize that there exists a Gorenstein (even complete intersection) analytic structure
supported on $\Gamma$, which has the very same $Z_{max}=2E^*_v$, however in that case
$\calO_{\tX}(-Z_{max})$ has no base points, hence ${\rm mult}(X,o)=-Z_{max}^2=4$ (and $p_g=3$).

Furthermore, there exists also a (Kodaira/Kulikov) type analytic structure supported on $\Gamma$ with a
smaller maximal ideal cycle, namely $Z_{max}=Z_{min}=E_v^*$. In this case $Z_{max}^2=-1$,
$\calO_{\tX}(-Z_{max})$ has a unique base point of $A_2$--type on $E_v$, hence ${\rm mult}(X,o)=3$.
(In this case $p_g=3$ too.) For details see \cite{NO17}.
\end{example}

\section{Generic line bundles of arbitrary  singularities.}\label{s:GEN-GEN}

\subsection{} In \cite{NNI} we fixed an analytic type $\tX$ (not necessarily generic)  and
 we determined combinatorially  several cohomological properties of generic line bundles
$\calL_{gen}\in\pic^{-l'}(\tX)$. On the other hand, for a fixed resolution graph $\Gamma$,
the philosophy/aim  of \cite{NNII} was to show that
%\marginpar{kell ide generic for $\tX$????}
(restricted) natural line bundles with given Chern class, associated with generic analytic  structures
supported on $\Gamma$, behave cohomologically as the generic line bundles
(of an arbitrary singularity) with the same Chern class.

In the present note, in Theorem \ref{th:NEW2} we establish several properties of (restricted) natural line bundles
of generic singularities.
It is natural to ask whether these properties are valid for generic line bundles of an arbitrary  singularity.
The next theorem answers positively, provided that an additional assumption  is satisfied.
(For the fact that some restriction is needed see Remark \ref{rem:m=m'}.)

Recall the following fact (see Remark \ref{rem:M+} and paragraph \ref{ss:pole}).
\begin{lemma}\label{minimumcycle}
Fix a resolution graph $\Gamma$  and a dominant Chern class $l' \in \calS'$. Assume that
for  $v \in \calv$ the identity  $\min_{l \geq E_v} \chi(l' + l) = \chi(l') + 1$ holds.
Then the set of cycles $x$ which satisfy both
 $ x \geq E_v$  and  $\chi(l' + x) =  \chi(l') + 1$  has  a unique maximal element  $l$
and a unique  minimal element  $m$.
\end{lemma}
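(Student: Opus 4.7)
The plan is to prove the set is closed under coordinatewise $\min$ and $\max$, after which existence of unique extremes follows (with a short finiteness remark for the maximum). The key tool is the standard identity: for any two cycles $A, B \in L$ with $m := \min(A,B)$ and $M := \max(A,B)$, one has $A + B = m + M$, and a direct intersection-form computation gives
$$\chi(A) + \chi(B) \;=\; \chi(m) + \chi(M) + (A-m,\, B-m).$$
Since $A - m$ and $B - m$ are effective cycles with disjoint supports (by the definition of $\min$), and distinct exceptional components have nonnegative intersection, the last term is $\geq 0$.

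First I would apply this identity to $l' + A$ and $l' + B$ for two arbitrary elements $A, B$ of the given set. Note that $\min(l'+A, l'+B) = l' + m$ and $\max(l'+A, l'+B) = l' + M$. Since both $A, B \geq E_v$, we also have $m, M \geq E_v$. The hypothesis $\min_{l \geq E_v} \chi(l'+l) = \chi(l') + 1$ then gives $\chi(l'+m) \geq \chi(l')+1$ and $\chi(l'+M) \geq \chi(l')+1$. Combining with the identity above and the fact that $\chi(l'+A) = \chi(l'+B) = \chi(l')+1$, we obtain
$$2(\chi(l')+1) \;=\; \chi(l'+A)+\chi(l'+B) \;\geq\; \chi(l'+m)+\chi(l'+M) \;\geq\; 2(\chi(l')+1).$$
Hence equality holds throughout, which forces $\chi(l'+m)=\chi(l'+M)=\chi(l')+1$, so both $m$ and $M$ belong to the set. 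Thus the set is closed under $\min$ and $\max$.

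Closedness under $\min$ immediately implies the existence of a unique minimal element (the coordinatewise $\min$ of all elements, which remains in the set by iterated application). For the unique maximal element, I need the set to be finite, or at least bounded above. This is automatic: since the intersection form is negative definite, for any fixed $l' \in L'$ the function $x \mapsto \chi(l'+x) = -\tfrac{1}{2}(l'+x, l'+x - Z_K)$ tends to $+\infty$ as the coefficients of $x$ grow, so only finitely many $x \geq E_v$ satisfy $\chi(l'+x) = \chi(l')+1$. Hence the iterated coordinatewise $\max$ stabilizes and gives the unique maximal element.

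I do not expect a substantive obstacle here: the argument reduces to the standard convexity property of $\chi$ under $\min/\max$ decomposition together with the exact-value constraint in the hypothesis. The only mild care needed is the bounded-above remark ensuring the maximum exists.
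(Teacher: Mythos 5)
Your proof is correct. For the \emph{minimal} element it is essentially the paper's own argument (see \ref{ss:pole}): there the authors take $A,B$ realizing the value $\chi(l')+1$, set $m=\min\{A,B\}$, $M=\max\{A,B\}$, and use exactly the chain $2(\chi(l')+1)=\chi(l'+A)+\chi(l'+B)\geq\chi(l'+m)+\chi(l'+M)\geq 2(\chi(l')+1)$, i.e.\ the same convexity inequality whose positivity term $(A-m,B-m)\geq 0$ you spell out. Where you genuinely diverge is the \emph{maximal} element. The paper (Remark \ref{rem:M+}(b)) does not argue by closure under $\max$: it identifies the maximal element with $s'$, the unique minimal element of $\calS'_{an,[l']}$ with $s'\geq l'+E_v$, using the semigroup characterization $\chi(l')<\chi(l'+l)$ of Theorem \ref{th:OLD}(f) for the generic analytic structure; this buys the extra identification of the maximal cycle with an analytic-semigroup element, which is what is actually used in Theorem \ref{th:NEW2}(5') and in (\ref{eq:M+}). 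Your route — closure under coordinatewise $\max$ (which falls out of the same equality chain) together with the observation that negative definiteness makes the set finite, so a unique maximum exists — is more elementary and purely combinatorial, matching the combinatorial phrasing of the lemma, but it does not by itself provide the interpretation of the maximal element as the minimal semigroup element $s'$. Both arguments are complete for the statement as written; your finiteness remark is the only extra ingredient you need, and it is justified correctly.
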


%\begin{proof} It is enough to prove that
%if  $ A, B \geq E_v$, $\chi(l' + A) =  \chi(l') + 1$ and $\chi(l' + B) =  \chi(l') + 1$
% then  $\chi(l' +  \min\{A, B\} ) = \chi(l') + 1$ too.
%Suppose the contrary, that is,  $\chi(l' +  \min\{A, B\} ) >  \chi(l') + 1$.
%Set  $C = \min\{A, B\}$, $A = C + A_1$ and  $B = C + B_2$.
% Then we have $\chi(C) - (l', C) = d > 1$, $\chi(C) + \chi(A_1) - (C, A_1) - (l', C + A_1) = 1$,
%$\chi(C) + \chi(B_1) - (C, B_1) - (l', C + B_1) = 1$.
%
%This  means that $ \chi(A_1) - (C, A_1) - (l', A_1) = 1- d$ and $ \chi(B_1) - (C, B_1) - (l', B_1) = 1- d$, which
%implies $\chi(l' + C + A_1 + B_1)-\chi(l')  = 2- d - (A_1, B_1) \leq 0$. This contradicts the fact that
%the Chern class $l'$ is  dominant, cf. Theorem \ref{th:dominant}.
%\end{proof}

\begin{theorem}\label{th:GENGEN}
Let  $\tX$ be a resolution of an arbitrary singularity (with rational homology sphere link).
Fix $l'\in \calS'\setminus \{0\}$ such that
$c^{-l'}(Z)$ is dominant for $Z\gg 0$.  Then the properties {\it (1')--(4')}
of Theorem \ref{th:NEW2} hold for
a  generic element $\calL_{gen}$ of  $\pic^{-l'}(\tX)$ (instead of $\calL$ of Theorem \ref{th:NEW2}).

Suppose that a generic line bundle $\calL_{gen}$ of  $\pic^{-l'}(\tX)$ has  a base point on the exceptional divisor $E_v$ and $l$ is the largest cycle such that $l \geq E_v$ and $\chi(l' + l) = \chi(l') + 1$ and $m$ is the minimal cycle such that
$m \geq E_v$ and $\chi(l' + m) = \chi(l') + 1$. (In particular, $l'+l$ is dominant too.)

Then, if the property {\it (5')} fails for the line bundle $\calL_{gen}$ then
$m$ is necessarily the minimal cycle associated with the dominant Chern class $l'+l$ too,
that is, it is the minimal cycle  $m \geq E_v$ satisfying  $\chi(l' + l + m) = \chi(l' + l) + 1$.
\end{theorem}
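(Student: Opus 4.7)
The plan is to combine openness arguments on $\pic^{-l'}(\tX)$ with the dominance of the Abel map for parts (1')--(4'), and to deduce the conditional second statement from an arithmetic identity satisfied by the Riemann--Roch function.

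For parts (1')--(4'), each of the listed properties is a (constructible) open condition on $\pic^{-l'}(\tX)$, so it suffices to exhibit a non-empty open subset on which it holds. For (1'), fix a singular point $p = E_u \cap E_w$ of $E$; the subvariety $\eca^{-l'}(Z)_p \subset \eca^{-l'}(Z)$ of Cartier divisors whose support contains $p$ is a proper closed subvariety of the smooth irreducible $\eca^{-l'}(Z)$. By dominance of $c^{-l'}(Z)$ (our hypothesis), the generic fiber contains divisors outside $\eca^{-l'}(Z)_p$, so the generic $\calL_{gen}$ admits a section whose divisor misses $p$. The finite union over singular points of $E$ gives (1'). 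For (2'), the open subset of $\eca^{-l'}(Z)$ consisting of divisors whose total transform on $\tX$ is reduced, smooth, and transversal to $E$ at regular points is non-empty, and dominance of $c^{-l'}(Z)$ transfers this to the generic fiber. For (3'), the hypothesis $\min_{l \geq E_v} \chi(l' + l) - \chi(l') \geq 2$ combined with Theorems \ref{th:OLD}(d),(f) and \ref{th:B} (whose cohomological conclusion for natural line bundles of generic singularities extends to generic line bundles of an arbitrary singularity with the same Chern class) gives $\dim V_v \geq 2$ for $\calL_{gen}$ in the sense of Remark \ref{rem:*}(a). Two $V_v$-linearly-independent sections then have distinct restricted divisors along $E_v$, and a generic linear combination produces two sections of $\calL_{gen}$ with no common zeros along $E_v$. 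Part (4') follows from (1')--(3') exactly as at the end of Section \ref{s:Proof3}.

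The second claim rests on the quadratic identity
\[
\chi(l' + l + m) - \chi(l' + l) \;=\; \chi(l' + m) - \chi(l') - (l, m) \;=\; 1 - (l, m),
\]
derived from $\chi(x+y) = \chi(x) + \chi(y) - (x,y)$ and the two hypotheses $\chi(l'+l) = \chi(l'+m) = \chi(l') + 1$. Since $l'+l$ is dominant (Remark \ref{rem:M+}(a) applied to the maximal $l$), one has $(l, m) \leq 0$. Moreover, if $m^+$ denotes the minimal cycle $\geq E_v$ satisfying $\chi(l'+l+m^+) = \chi(l'+l)+1$, then the same identity applied at $m^+$ together with $l' \in \calS'_{an}$ (forcing $\chi(l'+m^+) \geq \chi(l')+1$) gives $(l, m^+) = 0$ and $\chi(l'+m^+) = \chi(l')+1$; by minimality of $m$ for $l'$, this yields $m \leq m^+$. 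The equality $m = m^+$ is therefore equivalent to the single numerical condition $(l, m) = 0$. The claim thus reduces to the implication: \emph{failure of (5') $\Longrightarrow$ $(l, m) = 0$}.

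The main obstacle is establishing this last implication, for which I plan a contrapositive argument via Laufer duality. Suppose $(l, m) < 0$, so $m^+ > m$ strictly. Attach to each base point $p \in E_v$ of $\calL_{gen}$ the form $\omega$ of Proposition \ref{prop:BASE}(e), with ${\rm pole}(\omega) \leq m$; attach similarly $\omega^+$ to each base point of $\calL_{gen}(-l)$, with ${\rm pole}(\omega^+) \leq m^+$. A coincidence of base points on $E_v$ would produce, via the local residue analysis of \ref{bek:Omega}, a non-trivial linear combination $a\omega + b\omega^+$ whose residues along the relevant transversal divisors vanish while its pole divisor is strictly below $m^+$; this would contradict the minimality characterization of $m^+$ as the smallest cycle supporting a form witnessing the base point of $\calL_{gen}(-l)$ through Theorem \ref{th:Formsres}(b)--(c). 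Executing this residue-calculus argument \emph{intrinsically} --- without access to the plumbing-deformation technique of Section \ref{s:Proof5}, which relied on genericity of $\tX$ --- is the delicate technical step and the principal obstruction to a clean proof.
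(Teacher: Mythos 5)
Your treatment of {\it (1')}, {\it (2')} and the dimension count $\dim V_v\geq 2$ is fine, but the passage from $\dim V_v\geq 2$ to {\it (3')} is a genuine gap: two linearly independent elements of the image of $H^0(Z,\calL_{gen})\to H^0(E_v,\calL_{gen}|_{E_v})$ may very well share zeros (on $E_v\simeq \P^1$ take the subspace spanned by $x^k$ and $x^{k-1}y$), and generic linear combinations of two sections retain every common zero of the pencil. The inequality $\dim V_v\geq 2$ only shows that \emph{not all} of the $k=-(l',E_v)$ points of $\widetilde{D}\cap E_v$ are base points; the actual content of {\it (3')} is that \emph{none} is. The paper closes exactly this gap (Lemma \ref{lemma:UJ}(a)) by an argument you do not supply: it introduces the incidence variety $\frII$ of pairs $(p,D)$ with $p\in |D|$ and its subvariety $\frII_b$ of base-point pairs, proves that $\frII$ is irreducible over a suitable Zariski open subset of $\pic^{-l'}(Z)$ (irreducibility of the Abel-map fibers plus a monodromy argument permuting the $k$ intersection points), and then observes that $\frII_b$ would have to be a proper subcover of an irreducible covering, forcing either all $k$ points to be base points (excluded by the dimension count) or none. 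Without this, or some substitute, {\it (3')} — and hence {\it (4')} as you derive it — is not established.

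For the conditional statement about $m$ and $m'$ your route is both different from the paper's and incomplete. The numerical identity $\chi(l'+l+m)-\chi(l'+l)=1-(l,m)$ is correct, but your reduction to ``$(l,m)=0$'' is not finished (from $1=[\chi(l'+m^+)-\chi(l')]-(l,m^+)$ and dominance of $l'$ you only get $(l,m^+)\geq 0$, not $=0$), and the decisive implication ``failure of {\it (5')} $\Rightarrow (l,m)=0$'' is left unproven; the Laufer-duality/residue plan you sketch is precisely the machinery that in Section \ref{s:Proof5} depended on deforming a \emph{generic} analytic structure, which is unavailable for an arbitrary $\tX$. The paper's proof is elementary and avoids differential forms altogether: assuming a common base point and $m\ngeq m'$, it applies Lemma \ref{lemma:UJ} on the cycle $m$ itself (using Lemma \ref{minimumcycle}), so that a generic $\calL'_{gen}\in\pic^{-l'}(m)$ has exactly $-(l',E_v)$ base points $q_1,\dots,q_k$ on $E_v$ while $\calL'_{gen}(-l)\in\pic^{-l'-l}(m)$ has none; it then takes $\calL_{gen}$ generic in the fiber of the restriction map $\pic^{-l'}(\tX)\to\pic^{-l'}(m)$ over $\calL'_{gen}$, concludes that the base points of $\calL_{gen}$ on $E_v$ are the $q_j$ while $\calL_{gen}(-l)$ has no base point there, contradicting the common base point; the symmetric argument gives $m'\geq m$ is impossible to fail, hence $m=m'$. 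You would need either to carry out your residue argument in full or to switch to this restriction-to-$m$ argument.
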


\begin{proof} We can assume that $(X,o)$ is not rational, otherwise the argument from \ref{ss:proofrat} holds identically.

 Take a generic divisor $\widetilde{D}$ with Chern class $-l'$. Then all components of $\widetilde{D}$ are smooth,
$\widetilde{D} $ intersects $E$ transversally, and $\calL_{gen}=\calO_{\tX}(\widetilde{D})$ satisfies {\it (1')--(2')}.

Next we prove  {\it (3')} and {\it (4')} in a slightly  more general context;
we will need this version  in the proof of {\it (5')} as well. The generalized statement is  the following:

\begin{lemma}\label{lemma:UJ}
Let $Z$ be an arbitrary cycle on $\tX$ and a Chern class $l'\in \calS'\setminus \{0\}$ such that
$c^{-l'}(Z)$ is dominant and let us  consider  a generic line bundle $\calL_{gen}$ of  $\pic^{-l'}(Z)$ as well.

(a) If $v \in |Z|$ and $\min_{Z \geq l \geq E_v} \chi(l' + l)  - \chi(l') \geq 2$, then $\calL_{gen}$ does not have
any  base point along the exceptional divisor $E_v$.

(b)  If $v \in |Z|$ and $\min_{Z \geq l \geq E_v} \chi(l' + l)  - \chi(l') = 1$, then $\calL_{gen}$ has exactly $-(l', E_v)$ base points along the exceptional divisor $E_v$.
\end{lemma}

\begin{proof}\ {\it (a)}\
%Consider $l'$ and $E_v$ as in the assumptions of part {\it (3')}.
 If $\calL$ is generic then by Theorems \ref{th:dominant} and \ref{th:OLD}{\it (f)}
$h^1(Z,\calL)=0$, and by Theorem 5.3.1 of \cite{NNI} (see also Theorem \ref{th:OLD}{\it (d)} from above)
$$ h^1(Z - E_v, \calL(-E_v)) = \chi(l'+E_v)-\min_{Z \geq l \geq E_v}\,\chi(l'+ l) .$$
Therefore, from the exact sequence $0\to \calL(-E_v)|_{Z-E_v}\to \calL\to \calL|_{E_v}\to 0$ we get

\begin{equation*}\label{eq:geq2}
\dim\,H^0(Z, \calL)/H^0(Z - E_v, \calL(-E_v)) =-(l',E_v)+1-h^1(Z - E_v, \calL(-E_v))=\min _{Z \geq l\geq E_v}  \, \chi(l'+l)-\chi(l')\geq 2.\end{equation*}
Thus, if $\widetilde{D}$ is a generic divisor of $\tX$ with Chern class $-l'$ and
$\widetilde{D}\cap E_v=\{p_1,\ldots, p_k\}$ ($k=-(l',E_v)$),
then not all the points $p_i$ are base points of $\calL=\calO_Z(\widetilde{D})$.
We wish to show that in fact none of them is a base point.
This basically will follow from the irreducibility of an incidence space.

We consider two incidence spaces
$$ \frII = \{\,   (p, D) \in E_v \times \eca^{-l'}(Z)  :  p \in |D|, \
\mbox{$D=\widetilde{D}|_Z$ and $\widetilde{D}$ intersects $E$ transversally}
\},$$
$$  \frII_b  = \{\,   (p, D)\in \frII  :  p \ \mbox{is a base point of $\calO_Z(D)$}\,\}.$$
Let $\pi_2:\frII\to \eca^{-l'}(Z)$ be the second projection, and let $\pi_{2,b}$ be its restriction
to $\frII_b$. They are morphisms with finite fibers.  If $c^{-l'}\circ \pi_{2,b}$ is not dominant, then for
$\calL \in \pic^{-l'}(Z) $ generic the fiber $(c^{-l'}(Z) \circ \pi_{2,b})^{-1}(\calL)=\emptyset$, hence we are done.
Hence, in the sequel we assume that $c^{-l'}(Z) \circ \pi_{2,b}$ is dominant. Then we can fix a non--empty Zariski open set
$U$ in $\pic^{-l'}(Z)$ such that $c^{-l'}(Z)$  and $c^{-l'}(Z) \circ \pi_2$ are ($C^{\infty}$) fibrations over $U$ and
$\pi_2$ is a regular covering over $U':=(c^{-l'}(Z))^{-1}(U)$. Furthermore, we can assume that the same facts  are true for
the restriction $\pi_{2,b}$ and  for the very same $U$.  We will replace the spaces $\frII$ and $\frII_b$ with their subspaces
sitting over $U$.

We claim that $\frII$ is irreducible. Indeed, $U$ is irreducible, all the fibers of $c^{-l'}(Z)$ are irreducible
(cf. \ref{ss:4.1}), hence $U'$ is irreducible. We need to show that the total space of the regular covering
$\frII\to U'$ is irreducible.
For this fix a divisor $\widetilde{D}$ with $\widetilde{D}\cap E_v=\{p_1,\ldots, p_k\}$
as above. Then, moving along a path the components of the divisor (hence the intersection points
$\{p_i\}_i$)   there exists a (monodromy)
path in $\frII$ such that the starting point corresponds to a fixed order of
$\{p_1,\ldots, p_k\}$ and the ending point any permutation of them.
(Here we need the fact that  the regular part of $E_v$ is also connected, and that any real one--dimensional
path in $\pic^{-l'}(Z)$ can be perturbed to be in $U$.) This shows that the covering $\pi_2$ over $U'$ is irreducible,
hence $\frII$ is irreducible.

On the other hand, the covering $\frII_b$ is a proper subspace of $\frII$, since not all the points $\{p_i\}_i$ are base points. This contradicts the irreducibility of $\frII$.
 This ends the proof of part {\it (a)}.

For part  {\it (b)} notice that from the exact sequence
$0\to \calL(-E_v)|_{Z-E_v}\to \calL\to \calL|_{E_v}\to 0$
we get that the dimension of the image of the map $H^0(Z, \calL) \to H^0(E_v, \calL|_{E_v})$
equals $\dim\,H^0(Z, \calL)/H^0(Z - E_v, \calL(-E_v)) =  \min _{Z \geq l\geq E_v}  \, \chi(l'+l)-\chi(l') = 1$.

This  means that every section of $\calL$ vanishes at the same points of the exceptional divisor $E_v$, so
hence  the line bundle $\calL$ has  $-(l', E_v)$ base points on $E_v$ ($\{p_1,\ldots, p_k\}$ as above).
\end{proof}

 Finally we consider property  {\it (5')}: we have to investigate
whether the line bundles $ \calL_{gen}$ and $\calL_{gen}(-l)$ have a common base point on  the exceptional divisor $E_v$.

Notice that $\calL_{gen}(-l)$  is also a generic line bundle in $\pic^{l' + l}(\tX)$ and $c^{l' + l}$ is dominant
(since $l'+l\in \calS'_{an}$ and then via Theorems \ref{th:OLD}{\it (f)} and \ref{th:dominant}) .

Assume in the following that for a generic line bundle $\calL_{gen} \in \pic^{l'}(\tX)$
the line bundles $ \calL_{gen}$ and $\calL_{gen}(-l)$ have a common base point on  the exceptional divisor $E_v$.

Consider the minimal cycle $m \geq E_v$ such that $E_v \leq m $ and $\chi(l' + m) = \chi(l') + 1$, and also
the minimal cyle $m'$ such that $m' \geq E_v$ and  $\chi(l' + l + m') = \chi(l' + l) + 1$.
 We claim that $m = m'$.

To prove the claim, assume first that $m \ngeq  m'$.
%we will prove that we have also $\chi(l' + l + Z) = \chi(l' + l) + 1$ in the following,
%which will be a contradiction.
By Lemma \ref{lemma:UJ}{\it (b)}
 a generic line bundle $\calL'_{gen} \in \pic^{-l'}(m)$
  has  $k:=-(l', E_v)$ disjoint base points  $q_1, q_2, \cdots, q_{k}$ on the exceptional divisor $E_v$.
On the other hand, using $m \ngeq  m'$,  Lemma \ref{lemma:UJ}{\it (a)}  and Lemma \ref{minimumcycle},
 the generic line bundle $\calL'_{gen}(-l) \in \pic^{-l' - l}(m)$ has no base points on the exceptional divisor $E_v$.

Now, if we consider  the restriction maps $r_1:  \pic^{-l'}(\tX) \to \pic^{-l'}(m)$ and $r_2:
 \pic^{-l'-l}(\tX) \to \pic^{-l'-l}(m)$ and a generic line bundle $\calL_{gen}$ in $ r_1^{-1}(\calL'_{gen})$,
then $\calL_{gen}$ is a generic line bundle in $\pic^{-l'}(\tX)$ and it has  base points
 $q_1, q_2, \cdots, q_k$.
On the other hand,  the line bundle $\calL_{gen}(-l)$ is a generic line bundle in  $ r_2^{-1}(\calL'_{gen}(-l)) \subset \pic^{-l' - l}(\tX)$ and has no base point at $q_1, q_2, \cdots, q_k$, since the line bundle $\calL'_{gen}(-l) \in \pic^{-l' - l}(m)$ has no base points on the exceptional divisor $E_v$.
This contradicts to the assumption that the line bundles $ \calL_{gen}$ and $\calL_{gen}(-l)$ have  a common base point
on  $E_v$. In particular, it  proves that $m\geq m'$.
The verification of the other case $m'\geq m$  is completely identical. Hence,  indeed,  $m = m'$.
\end{proof}

\begin{remark}\label{rem:m=m'}
Property {\it (5')} does not hold in general in the previous theorem (without the `$m=m'$ assumption').
Indeed, let us  consider  again the graph from  Example \ref{ex:TWOBASEDIFF} with
 the Chern class $l' = Z_K$ and with a Gorenstein analytic structure.

We know that $Z_K$ is dominant,   $(Z_K,E_2)=-1$ and $\chi(Z_K+Z_{min})=1=\chi(Z_K)+1$. Therefore,
the generic line bundle  in $\pic^{-Z_K}(\tX)$ has  a base point on the exceptional divisor $E_2$.

One can verify  that the elliptic cycle $C$ is the minimal cycle $l$
such that $l \geq E_2$ and $\chi(Z_K+ l)=1=\chi(Z_K)+1$,  and $Z_{min}$ is the maximal cycle $l$
such that $l \geq E_2$ and $\chi(Z_K+ l)=1=\chi(Z_K)+1$.

Notice also that the generic line bundle in $\pic^{-Z_K- Z_{min }}(\tX)$ also has a base point on $E_2$ and $C$ is the minimal cycle such that $C \geq E_2$ and $\chi(Z_K + Z_{min} + C)=1=\chi(Z_K + Z_{min} )+1$.

Now, we show that property {\it (5')} does not hold for a generic line bundle $\calL \in \pic^{-Z_K}(\tX)$,
that is,  the line bundles $\calL$ and $\calL(-Z_{min})$ have a common
base point on the exceptional divisor $E_2$.

Indeed, by the previous Lemma \ref{lemma:UJ}
already the line bundles $\calL | C$ and $\calL(-Z_{min}) |C$ have one  base point on the exceptional divisor $E_2$.
On the other hand,  these two restricted line bundle are the same because the `obstruction line bundle'
 $\calO_C(Z_{min})$ vanishes in the  Gorenstein case, cf. \cite{weakly}.
This means that the base points of the line bundles  $\calL | C$ and $\calL(-Z_{min}) |C$ coincide, which is
 the common base point of the line bundles $\calL$ and $\calL(-Z_{min})$ as well.

\end{remark}

\end{document}